\numberwithin{equation}{section}
\newtheorem{Theorem}{Theorem}[section]
\newtheorem{Lemma}[Theorem]{Lemma}
\newtheorem{Definition}[Theorem]{Definition}
\newtheorem{Corollary}[Theorem]{Corollary}
\newtheorem{Conjecture}[Theorem]{Conjecture}
\newtheorem{algorithm}{Algorithm}
\numberwithin{equation}{section}
 \def\p{\partial} 
\def \Vh0{\stackrel{\circ}{V}_h} \def\to{\rightarrow}
    \def\R{{\mathbb R}}
\newcommand{\lc}
{\mathrel{\raise2pt\hbox{${\mathop<\limits_{\raise1pt\hbox
{\mbox{$\sim$}}}}$}}}
\newcommand{\gc}
{\mathrel{\raise2pt\hbox{${\mathop>\limits_{\raise1pt\hbox{\mbox{$\sim$}}}}$}}}
\newcommand{\ec}
{\mathrel{\raise2pt\hbox{${\mathop=\limits_{\raise1pt\hbox{\mbox{$\sim$}}}}$}}}
\def\bb{\begin{equation}} \def\ee{\end{equation}}
\def\beqn{\begin{eqnarray}}  \def\eqn{\end{eqnarray}}
\def\beqnx{\begin{eqnarray*}} \def\eqnx{\end{eqnarray*}}
\def\bn{\begin{enumerate}} \def\en{\end{enumerate}}
\def\bd{\begin{description}} \def\ed{\end{description}}
\newenvironment{tablehere}
  {\def\@captype{table}}
  {}
\newenvironment{figurehere}
  {\def\@captype{figure}}
  {}
\title{Algorithm for Overcoming the Curse of Dimensionality for State-dependent Hamilton-Jacobi equations}
\begin{document}

\author{
Yat Tin Chow\footnote{Department of Mathematics, UCLA, Los Angeles, CA 90095-1555. (ytchow@math.ucla.edu, sjo@math.ucla.edu, wotaoyin@math.ucla.edu)
Research supported by ONR grant N000141410683, N000141210838, N000141712162 and DOE grant DE-SC00183838.}
\and
J\'er\^ome Darbon\footnote{Division of Applied Mathematics, Brown University, Providence RI, 02912, USA. (jerome\_darbon@brown.edu).}
\and
Stanley Osher\footnotemark[1]
\and
Wotao Yin\footnotemark[1]
}

\date{}
\maketitle

\begin{abstract}

In this paper, we develop algorithms to overcome the curse of dimensionality in possibly non-convex state-dependent Hamilton-Jacobi partial differential equations (HJ PDEs) arising from optimal control and differential game problems.  The subproblems are independent and they can be implemented in an embarrassingly parallel fashion.  This is an ideal setup for perfect scaling in parallel computing.
The algorithm is proposed to overcome the curse of dimensionality \cite{Dim1, Dim2} when solving HJ PDE. 

The major contribution of the paper is to change either the solving of a PDE problem or an optimization problem over a space of curves to an optimization problem of a single vector, which goes beyond the work of \cite{boyd}.
We extend the method in \cite{Hopf_Lax_2, Hopf_Lax_3, Hopf_Lax}.  We \emph{conjecture} a (Lax-type) minimization principle to solve \emph{state-dependent} HJ PDE when the Hamiltonian is convex, as well as \emph{conjecture} a (Hopf-type) maximization principle to solve \emph{state-dependent} HJ PDE when the Hamiltonian is \emph{non-convex}.
In particular the conjectured (Hopf-type) maximization principle is a generalization of the well-known Hopf formula in \cite{Evans,Hopf_forumula,rublev}.
We showed the validity of the formula under restricted assumption for the sake of completeness, and would like to bring our readers to \cite{proofimportant} which validates that our conjectures hold in a more general setting after a previous version of our paper is on arXiv.    We conjectured the weakest assumption of our formula to hold is a psuedoconvexity assumption similar to one stated in \cite{rublev}.

The optimization problems are of the same dimension as the dimension of the HJ PDE.
The evaluation of the functional inside the minimization/maximization principles comes along with numerical ODE solvers and numerical quadrature rules.
We suggest a coordinate descent method for the minimization procedure in the generalized Lax/Hopf formula, and numerical differentiation is used to compute the derivatives.   This method is preferable since the evaluation of the function value itself requires some computational effort, especially when we handle higher dimensional optimization problem.
Similar to \cite{Hopf_Lax_3}, numerical errors come in because we use a numerical quadrature rule for computing integrals within the minimization/maximization principles, numerical differentiation to minimize the number of calculation procedures in each iteration, and in addition, with the choice of the numerical ODE solver.  These errors can be effectively controlled by choosing an appropriate mesh-size in time and the method does not use a mesh in space.  The use of multiple initial guesses is suggested to overcome possibly multiple local extrema since the optimization process is no longer convex.  A certificate is proposed to check the correctness of the argument minimum computed from the descent algorithm, and any local optimal or critical points obtained from the algorithm are discarded and the algorithm is reinitialized with a new random initial guesses.

 Our method is expected to have application in control theory, differential game problems and elsewhere.

\end{abstract}

\noindent { \footnotesize {\bf Mathematics Subject Classification
(MSC2000)}:
35F21,46N10,49N70,49N90,90C90,91A23,93C95
}

\noindent { \footnotesize {\bf Keywords}:
Hamilton-Jacobi equations, viscosity solution, Hopf-Lax formula, nonconvex Hamiltonian, differential games, optimal control
}

\section{Introduction} \label{sec1}

Hamilton-Jacobi-Isaacs partial differential equations (HJ PDE) are crucial in analyzing continuous/differential dynamic games, control theory problems, and dynamical systems coming from the physical world, e.g. \cite{diff_game}.  An important application is to compute the evolution of geometric objects \cite{Propagation}, which was first used for reachability problems in \cite{con1,con2}, to our knowledge.

Numerical solutions to HJ PDE have attracted a lot of attention.  Most of the methods involve the introduction of a grid and a finite difference discretization of the Hamiltonian.  Some of these well-known methods using discretization include ENO/WENO-type methods \cite{WENO} and Dijkstra-type \cite{Dijkstra} methods such as fast marching  \cite{FM} and fast sweeping \cite{FS}.
However, with their discretization nature, these numerical approaches of HJ PDE  suffer from poor scaling with respect to dimension $d$, hence rendering them impossible to be applied to problems in high dimensions.

Research has therefore been conducted by several groups in search of possible algorithms that can scale reasonably with dimension.  Some new algorithms are introduced in e.g. \cite{darbon_HJ,Tensor, Sparse}.
In \cite{Hopf_Lax, Hopf_Lax_2, Hopf_Lax_3}, the authors proposed a causality-free method for solving possibly non-convex and time dependent HJ PDE based on the generalized Hopf-Lax formula.    Using the Hopf-Lax formula, the PDE becomes decoupled and the solution at each point can be effectively calculated by $d$-dimensional minimization, with $d$ the space dimension.

In this work, we propose to extend the method in \cite{Hopf_Lax, Hopf_Lax_2, Hopf_Lax_3}.  
The major contribution of the paper is to change either the solving of a PDE problem or an optimization problem over a space of curves to an optimization problem of a single vector.
We \emph{conjecture} the (Lax-type) minimization principle to solve \emph{state-dependent} HJ PDE when the Hamiltonian is convex.  We also \emph{conjecture} a (Hopf-type) maximization principle to solve \emph{state-dependent} HJ PDE when the Hamiltonian is \emph{non-convex} but when the initial data is \emph{convex}. 
In particular the conjectured (Hopf-type) maximization principle is a generalization of the well-known Hopf formula in \cite{Evans,Hopf_forumula,rublev}.
 We validated our conjectures under restricted assumptions and refers to \cite{proofimportant} for the validation of our formula under a less restricted setting.
The optimization problems are of the same dimension as the dimensions of the HJ PDE.
A coordinate descent method is suggested for the minimization procedure in the generalized Lax/Hopf formula, and numerical differentiation is used to compute the derivatives.   This method is preferable since the evaluation of the function value itself requires some computational effort, especially when we handle higher dimensional optimization problems.
The use of multiple initial guesses is suggested to overcome possibly multiple local extrema since the optimization process is no longer convex.
A simple numerical ODE solver is used to compute the bi-characteristics in the Hamiltonian system.
A numerical quadrature rule is used for computing integrals with respect to time inside the minimization/maximization principles.
Coordinate descent is also used and a numerical differentiation is performed to minimize the number of calculation procedures in each iteration steps. 
In this paper we illustrate the practicality of our method using the simplest ODE solvers, quadrature rules and finite difference methods, namely the forward Euler, rectangular rule, and forward difference methods.
Nonetheless, all these components of the optimization can be improved by using better numerical methods, e.g. pseudo-spectral methods for ODE, Gauss-Lobatto quadrature rules, etc.
These choices, together with choosing an appropriate mesh-size in time, minimizes errors effectively.  Our method does not use a mesh in space, and solutions can be computed at each point $(x,t)$ in a totally decoupled and embarrassingly parallel manner.
Moreover, the solution to the HJ PDE is evaluated at each point $(x,t)$ by coordinate minimization described by the minimization/maximization principles at that point.  

As for high dimensional control, we would like to compare our work with \cite{boyd}, which is concerned with discrete approximations of a particular set of control, namely the discrete linear-convex control problems involving a quadratic control. In the paper, both the final parameter $v$ as well as the whole curve representing controls are unknowns.  The major contribution of our paper are to remove the optimization over the control (getting the HJ PDE as in \cite{Hopf_Lax_2, Hopf_Lax_3, Hopf_Lax}), as well as extending our method in \cite{Hopf_Lax_2, Hopf_Lax_3,Hopf_Lax} to a much more general setting, i.e. differential games/nonconvex problems and the Hamilton-Jacobi limit. Also,  our algorithm is faster comparing with \cite{boyd} even at the discrete level because we remove the optimization over the control.   We have also proposed both the Lax and the Hopf version, while \cite{boyd} focuses only on the analogue of a Lax type formula.  We would like to remark that by considering and working on the dual, the algorithm can be made much faster in many cases, especially when the initial data is convex.

Our formal statements of the conjectures that are used for computation will be given in section \ref{sec4}.  However, before we give an exact formulation of our conjectures, 
let us briefly provide our main conjectures.  In what follows, we denote $p$ as the co-state variable and $H(x,p,t)$ as the Hamiltonian, as well as $\varphi(x,t)$ as the value function satisfying \eqref{HJeqn}-\eqref{HJinitial}. We also denote $( \gamma(t) , p(t) ) $ as the bi-characteristic curve in the phase space that shall satisfy the constraints in the following formulae.  Then we conjecture the followings:
\begin{enumerate}
\item
\textbf{Minimization principle} (Lax Formula) 
Assume $H(x,p,t) \in C^2$ and is convex w.r.t. $p$, and (A5) is satisfied, then there exists $t_0$ such that the viscosity solution to \eqref{HJeqn}-\eqref{HJinitial} can be represented as for $t \leq t_0$ that:
\beqn
 \varphi(x,t)
&=&\min_{v \in \mathbb{R}^d } \bigg\{ g( \gamma(x,v,0)) + \int_{0}^t \left\{ 
\langle p(x,v,s), \partial_p H( \gamma(x,v,s), p(x,v,s), s) \rangle
 - H(\gamma(x,v,s), p(x,v,s),  s) \right\} ds  :  \notag \\
& & \qquad \qquad
 \begin{matrix} \dot{ \gamma }(x,v,s) =  \partial_p H( \gamma(x,v,s), p(x,v,s),s), \\  \dot{ p }(x,v,s) = -\partial_x H( \gamma(x,v,s), p(x,v,s), s), \\ \gamma(x,v,t) = x, \, p(x,v,t) = v\end{matrix} 
 \bigg\}
\label{lax_formula}
\eqn
and its discrete approximation given a small $\delta$,
\beqnx
 \varphi(x,t)
&\approx&\min_{v \in \mathbb{R}^d } \bigg\{ g( x_0(x,v) ) + \delta \sum_{n=1}^{N-1} \left\{ 
\langle p_n(x,v) , \partial_p H(x_n(x,v), p_n(x,v), t_n) \rangle
 - H(x_n(x,v), p_n(x,v), t_n) \right\}   :  \\
& & \qquad \qquad
 \begin{matrix} x_{n+1}(x,v) - x_n(x,v) =  \delta \partial_p H(x_n(x,v), p_n(x,v), t_n), \\ p_{n-1}(x,v) - p_n(x,v) =  \delta \partial_x H(x_n(x,v), p_n(x,v), t_n), \\ x_N= x, \, p_N = v\end{matrix} 
 \bigg\}
\eqnx
We would like to remark that the bi-characteristics $\left(  \gamma (x,v,\cdot) , p(x,v,\cdot) \right) $ depend on the two initial conditions $x$ and $v$, and this dependence is emphasized using a notations that include the two independent variables.
\item
\textbf{Maximization principle} (Hopf Formula) 
Assume $H(x,p,t) \in C^2$ and $g(p) \in C^2$ is convex w.r.t. $p$ that satisfies (A5).  Assume the pseudoconvex condition stated in Conjecture \ref{conj2} holds, then there exists $t_0$ such that the viscosity solution to \eqref{HJeqn}-\eqref{HJinitial} can be represented as,
for $t < t_0$, that:
\beqn
& & \varphi(x,t) \notag \\
&=&\max_{v \in \mathbb{R}^d } \bigg\{  \langle x,v \rangle  - g^*( p(x,v,0)) - \int_{0}^t  \bigg\{ H(\gamma(x,v,s), p(x,v,s), s) -  \langle \partial_x H( \gamma(x,v,s), p(x,v,s),s),  \gamma(x,v,s) \rangle  \bigg\} ds  :  \notag \\
& & \qquad \qquad
 \begin{matrix} \dot{ \gamma }(x,v,s) =  \partial_p H( \gamma(x,v,s), p(x,v,s),s), \\  \dot{ p }(x,v,s) = - \partial_x H( \gamma(x,v,s), p(x,v,s), s), \\ \gamma(x,v,t) = x, \,  p(x,v,t) = v\end{matrix} 
 \bigg\}
\label{hopf_formula}
\eqn
and its discrete approximation given a small $\delta$
\beqnx
& & \varphi(x,t) \\
&\approx&\max_{v \in \mathbb{R}^d } \bigg\{  \langle x_N,v_N \rangle   -g^*( p_1(x,v) ) 
- \delta \sum_{n=1}^{N}  H(x_n(x,v), p_n(x,v), t_n)
+ \delta \sum_{n=1}^{N-1}  \langle x_n(x,v) , \partial_x H(x_n(x,v), p_n(x,v), t_n) \rangle
  :  \\
& & \qquad \qquad
 \begin{matrix} x_{n}(x,v) - x_{n-1}(x,v) =  \delta \partial_p H(x_n(x,v), p_n(x,v), t_n), \\ p_{n}(x,v) - p_{n+1}(x,v) =  \delta \partial_x H(x_n(x,v), p_n(x,v), t_n), \\ x_N= x, \, p_N = v\end{matrix} 
 \bigg\}
\eqnx
\end{enumerate}
and in both cases the argument $v$ attaining the minimum or the maximum in the respective formula is $\p_x \varphi(x,t) $ when $\varphi$ is smooth at $(x,t)$.  These are the key conjectures from a practical point of view.
When $H(x,p,t)$ is non-smooth (e.g. in the case when $H(x,p,t)$ is homogeneous of degree $1$ w.r.t. $p$), more general forms of the respective conjectures are also available.  We must emphasize that these modifications of the formulas are \textbf{necessary} when $H$ is non-smooth or otherwise some part of sub-gradient flow will be missed and the formula will be incorrect. We notice that, in particular, our conjectured (Hopf-type) maximization principle is a generalization of the well-known Hopf formula in \cite{Evans,Hopf_forumula,rublev}.

Our way to approach the problem using the Lax minimization principle or the Hopf maximization principle goes between the indirect method (Pontryagin's maximum principle) and the direct method (direct optimization over the spaces of curves) and is optimal for computational efficiency.   This way we are able to minimize the number of variables to be optimized, since a discretization of curve needs a lot of variables; and on the other hand we keep a variable and the functional such that we have a descent algorithm that guarantee convergence to a local minimum.  The correctness of the computed limit (i.e. if it is a global minimum) can be checked by the condition $p(0) \in \partial g (\gamma (0))$.

The rest of our paper is organized as follows: in subsection \ref{sec2} we introduce the general class of HJ-PDE that we are interested in, and then we briefly explain the connection between HJ-PDE and differential games subsection \ref{sec3}.  Then in section \ref{sec4}, first in subsection \ref{proof_simple}, we showed the formula are true in a restricted set of assumptions; and then we discuss the major formulae that we use in our work: the conjectured minimization principle (Lax formulation) in subsection \ref{con/con_ham} and the conjectured maximization principle (Hopf formulation) in subsection \ref{noncon/con_ham}. 
We then go on to explain briefly our numerical techniques in section \ref{sec5} and the certificate of correctness.  Numerical examples are given in section \ref{sec6}.

We would like to bring the readers to notice that after a previous version of our paper in arXiv is out, in \cite{proofimportant} showed that our conjectures hold in a more general setting than we did.   However for the sake of completeness, we keep the proof with the restricted assumptions, and we refer our readers to the more general case shown in \cite{proofimportant}

\section{Review of Hamilton-Jacobi Equations and Differential Games}  

\subsection{Hamilton-Jacobi Equations} \label{sec2}

In this work, we are concerned with an approximation scheme for solving the following HJ PDE:
\beqn
\frac{\p}{ \p t} \varphi(x,t) + H( x, \nabla_x \varphi(x,t), t) = 0 \quad \text{ in } \mathbb{R}^d \times (0, \infty) \,,
\label{HJeqn}
\eqn
where $H: \mathbb{R}^d \times  \mathbb{R}^d  \times \mathbb{R} \rightarrow  \mathbb{R}$ is a continuous Hamiltonian function bounded from below by an affine function,   $\frac{\p}{ \p t} \varphi$ and $  \nabla_x \varphi $ respectively denote the partial derivatives with respect to $t$ and the gradient vector with respect to  $x$ of the function $\varphi:\mathbb{R}^d \times (0, \infty)\to \R$.  We are also given the initial data
\beqn
\varphi(x,0) = g(x)  \quad \text{ in } \mathbb{R}^d \,.
\label{HJinitial}
\eqn
%We only consider functions $H$ and $g$ that are finite everywhere. 
%Results presented in this paper can be generalized to functions with the extended value $+ \infty$ under suitable assumptions.
We aim to compute the viscosity solution to \eqref{HJeqn}-\eqref{HJinitial} \cite{vis2, vis1} at a given point $x \in \mathbb{R}^d$ and time $t \in ( 0, \infty) $.

The viscosity solution to \eqref{HJeqn}-\eqref{HJinitial}  is explicitly given by the Hopf-Lax formulae when $H$ is $x$ independent, which holds because the integral curves of the Hamiltonian vector field (i.e., the bi-characteristics in the phase space) are straight lines when  projected to $x$-space.

%In general, when $H$ is $x$-dependent and is convex, we also postulate the minimization can only be performed along the bi-characteristics in the phase which are obtained numerically (i.e. minimization over the integral curves generated by the Hamiltonian flow in the phase space, which by definition preserves $H$).  We also postulate a maximization principle when the Hamiltonian is non-convex.

In general, when $H$ is $x$-dependent, we have shown in section \ref{proof_simple} that under restricted assumptions as listed in that subsection and a finite difference approximation and numerical integration, that a minimization and maximization principle approximates the solution to the HJ PDE.  They are formulated using a KKT condition which gives bi-characteristics in the phase (i.e. following the flow along the Hamiltonian vector field given by $H$.)

With those principles holding true in special cases, we postulate that the minimization and maximization principles may hold true for more general cases, which we will state more precisely in section \ref{sec4}.  
Following a previous version of our paper in arXiv in \cite{preprint}, in \cite{proofimportant} the authors verified our main conjectures under a more relaxed set assumptions using the notions of minimax, in both the Hamilton-Jacobi-Ballman situation and in some special cases of Hamilton-Jacobi-Issac situation (i.e. when the Hamiltonian is non-convex).  This is a first and monumental work which suggests the validity of our conjecture in a more general situation.

Minimax viscosity solution is introduced in \cite{minimax1,minimax2,minimax3}, and connections are made to control problems and differential games can be found in \cite{minimax4,minimax5,minimax6,minimax7}.
The theory go in parallel with the definition and theory of viscosity solution \cite{diff_game} coming from differential game, and in the case with convex Hamiltonian, they always coincide.
For more exposition of the minimax solution and development of this relationship with control problem, we refer the readers to \cite{proofimportant}

A more general notion of minimax viscosity solution (with a general non-convex but smooth Hamiltonion) which helps to recast the solution using a formulation that involves patching the graph of multi-valued geometric solutions in a correct manner, e.g. in \cite{minimaxsingularity,convexminimax}.  In the convex case, under further assumption, the solution can be formulated as a mini-max saddle point problem of a functional over the spaces of curves \cite{convexminimax}.
However, it is known that the general formulation of minimax solution coincide with the viscosity solution only if the dynamically programming principle (i.e. the semigroup property) is satisfied for the minimax solution \cite{semigroup,limitingminimax}.  In fact, in the work \cite{limitingminimax}, the author shows further that the viscosity solution can be constructed by a limiting sequence of minimax solutions.  However it seems such a limiting process may defy the purpose of formulating the solution of the viscosity solution as a calculable process which is in the form of an optimization.
Besides, in such full generality e.g. in \cite{minimaxsingularity,convexminimax} the generating will need to be assumed to be quadratic at infinity and they are described using a more complicated with deformation retract and relative cohomology (in the spirit of Morse theory).  
Therefore such a full generality may bring difficulty for computational purpose.  
We would also like to mention that, In the case of a convex Hamiltonian, the minimax solution is also known as the principle of least action, e.g. \cite{convexminimax,rockafellar1}.  A slightly different but very similar direction, which uses Wentzell-Freidlin theory and construct minimum action methods, are also developped, as in e.g. \cite{VE1,VE2,VE3,VE4}

The purpose of our conjecture is to find an appropriate generalization the Hopf formula which still involves an optimization problem of $d$ dimensions, such that one may apply a good numerical method for solving the solution using an optimization algorithm.  We are not aiming at full generality when one might need to use more sophisticated mathematical language to describe the solution while the formulation is not easy to be numerically computed.  However we would also prefer to obtain generality that is sensible and not restricted to a restricted class of Hamiltonian $H(x,p,t)$ (convex w.r.t. $p$, concave w.r.t. $x$ and satisfying a finite concavity-convexity assumption, c.f. Assumptions (A) and Theorem 2.3 in \cite{rockafellar1}) (see also Theorem 4.8 in \cite{rockafellar2}).
Since it is unclear how far such conditions and notions can be generalized as well as remained computable, we only show in a restricted case that our formulas hold in subsection \ref{proof_simple}.  One very important part of a successful proof of our formula will be to show that the equality holds for the mini-max inequality, e.g. by \cite{sion1,sion2}, or by stability/Slater's criterion \cite{rockafellar}.
We wish to leave it as a conjecture with more general conditions.

On the other hand, there is a well-known notion of Pontryagin's maximum principle \cite{pontryagin} which will give us an optimality condition for the control.  
In particular, work e.g. in \cite{clarke} links the maximum principle and dynamic programming, which discussed in the non-smooth case when will the costate variable coincide with a partial generalized gradient of the value function (c.f.  \cite{clarke}) for a clear definition.
However the optimality condition has a final condition for the state variable and an initial condition for the co-state variable,  We hope to avoid such a formulation and forward-backward iteration to compute for an optimal (hoping that the fixed point iteration should converge).  The motivation of our conjecture over this KKT condition is to keep a Lyapunov functional such that a numerical algorithm can minimize, and therefore guarantee descent of a functional and convergence of a subsequence of an algorithm even with more pathogical behaviors of $H$ and $g$, e.g. non-convexity, etc.

Our way to approach the problem goes between the indirect method (Pontryagin's maximum principle) and the direct method (direct optimization over the spaces of curves) and is optimal for computational efficiency.   This way we are able to minimize the number of variables to be optimized, since a discretization of curve needs a lot of variables; and on the other hand we keep a variable and the functional such that we have a descent algorithm that guarantee convergence to a local minimum.  The correctness of the computed limit (i.e. if it is a global minimum) can be checked by the condition $p(0) \in \partial g (\gamma (0))$. 

In what follows we will prove the most simple version of the conjecture in the case with Hamilton-Jacobi-Ballman equation under certain convexity and other technical assumptions.  A more general version of the theorem with rigorous proofs can be found in \cite{proofimportant}.

\subsection{Differential Games and its Connection with Non-convex Hamilton-Jacobi Equations}  \label{sec3}

In this subsection, we give a brief introduction to the specific optimal control and differential game problems we are considering, and a brief explanation as to how we recast them as problems of solving HJ PDE's.
We follow discussions in \cite{Hopf_Lax_2,Hopf_Lax_3,Hopf_Lax,diff_game}, see also \cite{Evans}, about optimal control and also for differential games, and their links with HJ PDE.

To start with, we first consider two convex compact sets $C$ and $D$, in which control parameters lie.
Then let us denote $\mathcal{A} = \left \{ a: [t, T] \rightarrow C\, : \, a \text{ is measurable}  \right \} $, which is referred to as the admissible set of Player I; and  $\mathcal{B} = \left \{ b: [t, T] \rightarrow D\, : \,  b \text{ is measurable} \right \} $, which is referred to as the admissible set of Player II.
We call the measurable functions $a: [t, T] \rightarrow C$ in the set $\mathcal{A}$  and the function $b: [t, T] \rightarrow D$ in the set $\mathcal{B}$  as \textit{controls} performed by Players I and II respectively.

We start with a system of differential equations given as follows.  Fix $ 0 \leq t < T , \, x \in \mathbb{R}^d$. We consider
\beqn
\begin{cases}
\frac{dx}{ds} (s) = f(s, x(s), a(s), b(s) )  & t \leq s \leq T  \,, \\
 x(t) = x  \,,&
\end{cases}
\label{4.1}
\eqn
We assume that the function
\beqnx
f : [0,T] \times \mathbb{R}^d \times A \times B \rightarrow \mathbb{R}^m
\eqnx
is uniformly continuous and
\beqnx
\begin{cases}
| f(t,x,a,b)| \leq C_1 \\
| f(t,x,a,b) - f(t,y,a,b) | \leq C_1 |x - y| \,,
\end{cases}
\eqnx
for some constant $C_1$ and for all $0 \leq t \leq T$, $x,y \in \mathbb{R}^m$, $a \in A$, $b \in B$.
The unique solution to \eqref{4.1} is called the response of the controls $a(\cdot), b(\cdot)$.
Then we introduce the \textit{payoff} functional for a given pair of $(x,t)$:
\beqnx
P(a,b) := P_{t,x} (a (\cdot) , b(\cdot))  := \int_t^T h (s, x(s), a(s), b(s)) \, ds + g(x(T)) \,,
\eqnx
where $g: \mathbb{R}^d \rightarrow \mathbb{R}$ satisfies
\beqnx
\begin{cases}
| g(x)| \leq C_2 \\
| g(x) - g(y) | \leq C_2 |x - y| \,,
\end{cases}
\eqnx
and $h$ satisfies
\beqnx
\begin{cases}
| h(t,x,a,b)| \leq C_3 \\
| h(t,x,a,b) - h(t,y,a,b) | \leq C_3 |x - y| \,,
\end{cases}
\eqnx
for some constants $C_2,C_3$ and all $0 \leq t \leq T$, $x,y \in \mathbb{R}^m$, $a \in A$, $b \in B$. In a differential game, the goal of player I is to maximize the functional $P$ by choosing his control $a$ whereas that of player II is to minimize $P$ by choosing his control $b$.

Now we are ready to define the lower and upper values of the differential game, based on the notation introduced above.
We first define the two sets containing the respective controls of players I and II:\beqnx
M(t) := \{a : [t,T] \rightarrow A : a \text{ is measurable.}\} \,, \\
N(t) := \{b : [t,T] \rightarrow B : a \text{ is measurable.}\} \,.
\eqnx
Define a strategy for player I as the map
\beqnx
\alpha : N(t) \rightarrow M(t)
\eqnx
for each $t \leq s \leq T$ and $b, \hat{b} \in B$ such that
\beqnx
b(\tau) = \hat{b} (\tau) \text{ for a.e. } t \leq \tau \leq s  \quad
\Rightarrow \quad  \alpha[b](\tau) = \alpha[\hat{b}] (\tau)  \text{ for a.e. } t \leq \tau \leq s \,.
\eqnx
Therefore a strategy for player I $ \alpha[b]$ is the control of player I given that of player II as $b$.
Similarly, let us define a strategy for player II as
\beqnx
\beta : M(t) \rightarrow N(t)
\eqnx
for each $t \leq s \leq T$ and $a, \hat{a} \in A$ such that
\beqnx
a(\tau) = \hat{a} (\tau) \text{ for a.e. } t \leq \tau \leq s  \quad
\Rightarrow \quad  \beta[a](\tau) = \beta[\hat{a}] (\tau)  \text{ for a.e. } t \leq \tau \leq s \,.
\eqnx
Again a strategy for player II $ \beta[a]$ is the control of player II given that of player I as $a$.

Now let  $\Gamma(t)$ denote the set of all strategies for I  and  $\Delta(t)$  for II beginning at time $t$. We are well equipped to define the upper and lower values of the differential game.
The lower value $V(x,t)$ is defined as
\beqnx
V(x,t) &:=& \inf_{\beta \in \Delta(t)} \sup_{a \in M(t)} P_{t,x}(a, \beta[a])  \\
&:=& \inf_{\beta \in \Delta(t)} \sup_{a \in M(t)} \left \{ \int_t^T h (s, x(s), a(s), \beta[a](s)) \, ds + g(x(T))  \right\} \,,
\eqnx
where $x(\cdot)$ solves \eqref{4.1} for a given pair of $(x,t)$.
Likewise, the upper value $U(x,t)$ is defined as
\beqnx
U(x,t) &:=& \sup_{\alpha \in \Gamma(t)} \inf_{b \in N(t)} P_{t,x}(\alpha[b],b)  \\
&:=&\sup_{\alpha \in \Gamma(t)} \inf_{b \in N(t)} \left \{ \int_t^T h (s, x(s), \alpha[b](s),b(s)) \, ds + g(x(T))  \right\} \,,
\eqnx
where $x(\cdot)$ again solves \eqref{4.1} for a given pair of $(x,t)$.

In fact, derived from dynamic programming optimality conditions in \cite{diff_game}, the lower and upper values $V$ and $U$ are the viscosity solutions of a certain possibly nonconvex HJ PDE.
For the sake of exposition, we first define the following two Hamiltonians:
\beqnx
\tilde{H}^+(x,p,t) = \max_{b \in B} \min_{a \in A} \{- \langle f (t,x,a,b) , p \rangle - h(t,x,a,b)\} \,, \\
\tilde{H}^-(x,p,t) = \min_{a \in A} \max_{b \in B} \{- \langle f (t,x,a,b) , p \rangle - h(t,x,a,b)\} \,.
\eqnx
A very important case of this class of Hamiltonian is when $\tilde{H}^\pm(t,x,p)$ are homogeneous of degree $1$, which is highlighted in this work.  In fact, in the case where 
\beqnx
f(t,x,a,b) &=&  a - b \\
h(t,x,a,b) &=& -  \mathcal{I}_{ c_1(x,t)  A(x,t)} (a) +   \mathcal{I}_{ c_2(x,t) B(x,t)} (b) \,,
\eqnx
where $\mathcal{I}_\Omega$ is the indicator functions of the sets $\Omega$, i.e.
\beqnx
\mathcal{I}_\Omega(x) = 
\begin{cases}
0  & \text{ if } x \in\Omega\\
\infty & \text{ if } x \notin \Omega
\end{cases}
\eqnx
and $A(x,t)$ and $B(x,t)$ are balanced, then
\beqnx
\tilde{H}^+(x,p,t) &=& \max_{b \in B} \min_{a \in A} \{ \langle b - a  , p \rangle  + \mathcal{I}_{ c_1(x,t) A(x,t)} (a) -   \mathcal{I}_{ c_2(x,t) B(x,t)} (b) \}  \\
&=& \max_{b \in c_2(x,t)B(x,t)} \min_{a \in c_1(x,t)  A(x,t)} \{ \langle b - a  , p \rangle \}  \\
&=& \max_{ [c_2(x,t)]^{-1} b \in B(x,t)} \min_{ [c_1(x,t)]^{-1} a \in A(x,t)} \{ \langle b - a  , p \rangle \}  \\
&=& \max_{  b \in B(x,t)} \min_{ a \in A(x,t)} \{ \langle c_2(x,t) b -  c_1(x,t) a  , p \rangle \} \,, \\
\tilde{H}^-(x,p,t)&=&\min_{a \in A}  \max_{b \in B}  \{ \langle b - a  , p \rangle  + \mathcal{I}_{ c_1(x,t) A(x,t)} (a) -   \mathcal{I}_{ c_2(x,t) B(x,t)} (b) \}  \\
&=&  \min_{a \in c_1(x,t)  A(x,t)} \max_{b \in c_2(x,t)B(x,t)}  \{ \langle b - a  , p \rangle \}  \\
&=& \min_{ [c_1(x,t)]^{-1} a \in A(x,t)} \max_{ [c_2(x,t)]^{-1} b \in B(x,t)}  \{ \langle b - a  , p \rangle \}  \\
&=& \min_{ a \in A(x,t)} \max_{  b \in B(x,t)}  \{ \langle c_2(x,t) b -  c_1(x,t) a  , p \rangle \} \,.
\eqnx
thus it holds that $\tilde{H}^+$ and $\tilde{H}^-$ coincide, as well as the following relationship:
\beqnx
\tilde{H}^\pm(x,p,t) &=& \max_{b \in B(x,t)} \min_{a \in A(x,t)} \{  c_1(x,t) \langle a, p \rangle  -  c_2(x,t)  \langle b, p \rangle \}  \\
&=&   c_1(x,t)  \min_{a \in A(x,t)} \{  \langle a, p \rangle \} -  c_2(x,t)  \max_{b \in B(x,t)} \{ \langle b, p \rangle \}  \\
&=& - c_1(x,t) \mathcal{I}_{A(x,t)}^* (p) +  c_2(x,t) \mathcal{I}_{B(x,t)}^* (p)  \,.
\eqnx
In this case, $H^\pm(x,p,t) $ can be written as a difference of two positively homogeneous (of degree $1$) Hamiltonians $ \Phi_1(x,\cdot,t),  \Phi_2 (x,\cdot,t)$, namely,
\beqnx
\tilde{H}^\pm(x,p,t) = - c_1(x,t)  \Phi_1(x,p,t) +  c_2(x,t) \Phi_2(x,p,t)
\eqnx
where $\Phi_1(x,\cdot,t)$ and $\Phi_2(x,\cdot,t)$ have their respective Wulff sets as $A(x,t)$ and $B(x,t)$ (see
\cite{gauge,Hiriart,Wulff} for more details of the Wulff set.)

Now, for a general pair of $\tilde{H}^\pm(x,p,t)$, we have the following well-known theorem:
\begin{Theorem} \cite{diff_game} \label{diff_diff}
The function $U$ is the viscosity solution to the HJ PDE :
\beqnx
\begin{cases}
\frac{\partial }{\partial t} U - \tilde{H}^+( x, \nabla_x U ,t ) = 0 &\text{ on } \mathbb{R}^d \times (-\infty,T) \,,\\
U(x,T) = g(x) &\text{ on } \mathbb{R}^d  \,.
\end{cases}
\eqnx
Similarly, the function $V$ is the viscosity solution to the HJ PDE :
\beqnx
\begin{cases}
\frac{\partial }{\partial t} V - \tilde{H}^-(x, \nabla_x V ,t ) = 0 &\text{ on } \mathbb{R}^d \times (-\infty,T) \,,\\
V(x,T) = g(x) &\text{ on } \mathbb{R}^d  \,.
\end{cases}
\eqnx
\end{Theorem}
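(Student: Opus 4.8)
The plan is to follow the dynamic-programming approach of \cite{diff_game}: one first establishes that the Elliott--Kalton value $U$ is bounded and Lipschitz, then proves a dynamic programming principle for it, and finally uses that principle, tested against smooth functions, to verify that $U$ is simultaneously a viscosity sub- and supersolution of the stated terminal value problem. The argument for $V$ and $\tilde H^-$ is the mirror image, obtained by interchanging the two players, so I would describe only $U$ and $\tilde H^+$. For the \emph{a priori} regularity I would fix two initial points, drive the state from each by the \emph{same} control pair, and apply Gronwall together with $|f(t,x,a,b)-f(t,y,a,b)|\le C_1|x-y|$ to bound the uniform distance of the two responses on $[t,T]$ by $e^{C_1(T-t)}|x-y|$; feeding this into the Lipschitz bounds on $h$ and $g$ controls the difference of payoffs uniformly in all controls and strategies, hence in the sup--inf, giving Lipschitz dependence of $U$ on $x$. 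Lipschitz dependence on $t$ follows similarly from $|f|\le C_1$ by restarting the game after a short interval. This regularity is exactly what makes the limiting arguments below legitimate.

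Next I would establish the dynamic programming principle: for $t<t+\sigma<T$,
\[
U(x,t)=\sup_{\alpha\in\Gamma(t)}\inf_{b\in N(t)}\left\{\int_t^{t+\sigma} h(s,x(s),\alpha[b](s),b(s))\,ds+U(x(t+\sigma),t+\sigma)\right\},
\]
where $x(\cdot)$ solves \eqref{4.1} with the pair $(\alpha[b],b)$. Each of the two inequalities is proved by splitting a strategy on $[t,T]$ into its pieces on $[t,t+\sigma]$ and $[t+\sigma,T]$ and, conversely, by concatenating $\epsilon$-optimal strategies on the two subintervals into a single admissible strategy; the nonanticipation property is exactly what guarantees the concatenation is again a strategy, and a standard measurable-selection argument produces the $\epsilon$-optimal pieces.

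Then I would verify the viscosity inequalities. Let $\phi\in C^1$ and suppose $U-\phi$ has a local extremum at $(x_0,t_0)$, $t_0<T$, normalized so $U(x_0,t_0)=\phi(x_0,t_0)$. At a local \emph{minimum} I would apply the dynamic programming principle with Player~I using the pointwise (hence nonanticipating) strategy that answers $b(s)$ by a measurable selection of a maximizer over $a\in A$ of $\langle\nabla_x\phi(x_0,t_0),f(t_0,x_0,a,b(s))\rangle+h(t_0,x_0,a,b(s))$, and Player~II using an $\epsilon$-optimal response; dividing by $\sigma$, expanding $\phi$ along the response (which stays within $O(\sigma)$ of $x_0$ since $|f|\le C_1$), invoking uniform continuity of $f,h$ and compactness of $A,B$, and letting $\sigma\to 0^+$ yields $\phi_t(x_0,t_0)+\min_{b\in B}\max_{a\in A}\{\langle f,\nabla_x\phi\rangle+h\}\le 0$, i.e. $\phi_t(x_0,t_0)\le\tilde H^+(x_0,\nabla_x\phi(x_0,t_0),t_0)$. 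At a local \emph{maximum} the dual choice --- Player~II freezing an arbitrary open-loop control $\hat b$ while Player~I uses a near-optimal strategy --- gives, after the same limiting procedure followed by optimization over $\hat b\in B$, the reverse inequality. Because the problem is posed backward in time with data prescribed at $T$, these are precisely the sub- and supersolution conditions in the convention appropriate for the terminal value problem, and $U(x,T)=g(x)$ is immediate from the definition since the integral is then empty and $x(T)=x$.

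Finally, $\tilde H^+(x,p,t)=\max_b\min_a\{-\langle f(t,x,a,b),p\rangle-h(t,x,a,b)\}$ is Lipschitz in $p$ with constant at most $C_1$ and satisfies $|\tilde H^+(x,p,t)-\tilde H^+(y,p,t)|\le(C_1|p|+C_3)|x-y|$, so the standard comparison principle for Hamilton--Jacobi equations applies and the bounded, uniformly continuous viscosity solution with terminal data $g$ is unique; hence $U$ is that solution, and the same reasoning with the players exchanged identifies $V$ with the solution of the equation with $\tilde H^-$. The hard part will be the viscosity step: matching the Elliott--Kalton nonanticipating-strategy structure to the \emph{pointwise} $\max_b\min_a$ inside $\tilde H^+$, making sure that in one of the two inequalities only open-loop (in fact constant) controls are available to one player while the other keeps full strategic freedom --- and conversely for the other inequality --- and that the $\epsilon$-optimal selections on the short interval glue together through the dynamic programming principle without destroying measurability or nonanticipation. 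Those measurable-selection and time-rescaling estimates, together with establishing the comparison principle invoked in the last step, are where the real work lies; they are carried out in detail in \cite{diff_game}, which is why we only quote the statement here.
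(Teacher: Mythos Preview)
Your proposal is correct and matches the paper's treatment: the paper does not prove this theorem at all but simply cites \cite{diff_game}, and the dynamic-programming/viscosity argument you sketch is precisely the Evans--Souganidis proof from that reference, as you yourself note in your final sentence.
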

\noindent It is worth mentioning again that, in a general setting where $h$ is possibly nonconvex, the two Hamiltonians $\tilde{H}^+(x,p,t)$ and $\tilde{H}^-(x,p,t)$ may not coincide. But, when they do, there is the following corollary:
\begin{Corollary} \cite{diff_game}
If \beqnx
\tilde{H}^+( x, p,t) = \tilde{H}^-( x, p,t) \text{ on } [t,T]  \times \mathbb{R}^d \times  \mathbb{R}^d \,,
\eqnx
then it holds that $U = V$.
\end{Corollary}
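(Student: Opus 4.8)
The plan is to deduce the corollary directly from Theorem~\ref{diff_diff} together with the standard uniqueness (comparison) principle for viscosity solutions of terminal-value Hamilton--Jacobi equations. By Theorem~\ref{diff_diff}, the upper value $U$ is \emph{the} viscosity solution of the first terminal-value problem displayed there, with Hamiltonian $\tilde{H}^+$ and terminal data $U(\cdot,T)=g$, and the lower value $V$ is the viscosity solution of the second, with Hamiltonian $\tilde{H}^-$ and the same terminal data $V(\cdot,T)=g$. Under the hypothesis $\tilde{H}^+(x,p,t)=\tilde{H}^-(x,p,t)$ on $[t,T]\times\mathbb{R}^d\times\mathbb{R}^d$, these two boundary-value problems are literally identical, so $U$ and $V$ are both viscosity solutions of one and the same problem, and it remains only to invoke uniqueness.

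First I would check that the hypotheses imposed on $f$, $h$ and $g$ in this subsection (uniform continuity, the bounds $|f|\le C_1$, $|h|\le C_3$, $|g|\le C_2$, and the Lipschitz-in-$x$ estimates) are inherited by $\tilde{H}^\pm$: forming $\max_{b}\min_{a}$ (respectively $\min_{a}\max_{b}$) of a family of functions that are uniformly bounded and uniformly Lipschitz in $x$ preserves boundedness on bounded sets, joint continuity, and an estimate of the form $|\tilde{H}^\pm(x,p,t)-\tilde{H}^\pm(y,p,t)|\le C\,(1+|p|)\,|x-y|$. This is exactly the structural condition under which the Crandall--Lions comparison principle for continuous viscosity solutions applies (after the harmless reversal of time that turns the terminal-value problem into an initial-value problem). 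Then I would apply that comparison principle to the pair $U,V$: since they solve the same equation with the same bounded, uniformly continuous terminal data, one gets $U\le V$ and $V\le U$, hence $U\equiv V$ on $\mathbb{R}^d\times(-\infty,T]$.

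The only genuine content is the verification that $\tilde{H}^\pm$ lies within the scope of the uniqueness theory --- the regularity-inheritance step above, together with the choice of the correct comparison statement for this class of time-dependent Hamiltonians with at-most-linear growth in $p$. Once that is settled the conclusion is immediate, and in fact this is why the corollary is stated alongside Theorem~\ref{diff_diff} in \cite{diff_game}: the uniqueness machinery needed to characterize $U$ and $V$ as \emph{the} viscosity solutions of their respective problems is already the machinery that forces $U=V$ when the two Hamiltonians agree. So in the write-up I would simply point back to the comparison principle underlying Theorem~\ref{diff_diff} and note that it yields $U=V$ the moment the equations coincide.
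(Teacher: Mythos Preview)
Your argument is correct and is precisely the standard one: under the structural assumptions on $f$, $h$, $g$, the upper and lower Hamiltonians satisfy the continuity and $(1+|p|)$-Lipschitz-in-$x$ conditions needed for the Crandall--Lions comparison principle, so once $\tilde H^+=\tilde H^-$ the two terminal-value problems coincide and uniqueness forces $U=V$. Note that the paper does not supply its own proof of this corollary --- it is simply quoted from \cite{diff_game}, where exactly this uniqueness argument is carried out --- so your write-up matches the intended reasoning.
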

\noindent Hereafter, when $U=V$, we write  $ \varphi(x,t) := U(x,T-t) = V(x,T-t) $, and write $H(x,p,t) = \tilde{H}^\pm(x,p,T-t) $, then 
\beqnx
\begin{cases}
\frac{\partial }{\partial t} \varphi + H(x, \nabla_x \varphi ,t ) = 0 &\text{ on } \mathbb{R}^d \times (0,\infty) \,,\\
\varphi(x,0) = g(x) &\text{ on } \mathbb{R}^d  \,.
\end{cases}
\eqnx
Note that in general, the Hamiltonians $H$ can be nonconvex and/or nonconcave, and this is one very
important occasion in which nonconvex HJ PDE arises.
We would like to mention that the convention to write the HJ-PDE as an initial value problem or termianl value problem is a matter of convention.  One may write that either with the variable $t$ or with the variable $T-t$ and switch between the two formulations.  Since for many applications it is stated as an initial value problem, we would like to stick to the convention using initial value problem.

In the next section, we will discuss possible representation formulae of the HJ-PDE equation, which may help us to compute the solution quickly and in parallel.  We will prove they hold for restricted assumptions, and refer to the readers to \cite{proofimportant} for the case with less restricted assumptions.

\section{Representation formulae for viscosity solution of HJ PDE}  \label{sec4}

In this section, 
we prove in subsection \ref{proof_simple} that the two formulas hold under restricted assumptions.  
Then we go on to making a conjecture of a (Lax-type) minimization principle for the viscosity solution to \eqref{HJeqn}-\eqref{HJinitial} when $H$ is convex, and a (Hopf-type) maximization principle when $H$ is non-convex but when $g$ is convex that they shall still hold in this case other less restricted assumptions.   
In several examples given in the paper, when a Hopf formula is known for the solution, our conjectured representation reduces to these known formulae. 
We refer our readers to \cite{proofimportant} for a proof that our conjectures shall hold for a less restricted set of assumptions.

Before we provide our formal statements of the conjectures, for the sake of exposition, let us emphasize that the formulae stated in Section \ref{sec1} are the key conjectures from a practical point of view.
When $H(x,p,t)$ is non-smooth, more general forms of the respective conjectures are necessary.  The precise statements and proof under restricted assumptions will be given in the following subsections. 
Then we make our conjecture that the two formula still hold under less restricted assumptions.
One point to remark is that our conjectured Hopf formula is a generalization of the well-known Hopf formula in \cite{Evans,Hopf_forumula,rublev}.

\subsection{A simplified system} \label{simplified}

In order to show the formula is true in a simplified setting, for the sake of exposition, let us consider first a \textbf{simplified} system of differential equations:  Fix $ 0 \leq t < T , \, x \in \mathbb{R}^d$. We consider
\beqnx
\begin{cases}
\frac{dx}{ds} (s) = f(s, x(s), u(s))  & t \leq s \leq T  \,, \\
 x(t) = x  \,,&
\end{cases}
\eqnx
where
$u(\cdot) \in \mathcal{U}(t) = \left \{ u: [t, T] \rightarrow U\, : \,  u \text{ is measurable} \right \}$
is again a control and $U$ is the admissible set.
We again assume that the function
\beqnx
f : [0,T] \times \mathbb{R}^d \times U \rightarrow \mathbb{R}^m
\eqnx
is uniformly continuous and
\beqnx
\begin{cases}
| f(t,x,u)| \leq C_1 \\
| f(t,x,u) - f(t,y,u) | \leq C_1 |x - y| \,,
\end{cases}
\eqnx
for some constant $C_1$ and for all $0 \leq t \leq T$, $x,y \in \mathbb{R}^m$, $u \in U$.

We also consider the following simplified payoff function
\beqnx
P(u) := P_{t,x} (u (\cdot))  := \int_t^T h (s, x(s), u(s)) \, ds + g(x(T)) \,,
\eqnx
We consider also the value function
\beqnx
U(x,t) := \inf_{u \in \mathcal{U}(t)} P_{t,x}(u)  = \inf_{u \in \mathcal{U}(t)}\left \{ \int_t^T h (s, x(s), u(s)) \, ds + g(x(T))  \right\} \,,
\eqnx
and the Hamiltonian
\beqn
\tilde{H}(x,p,t) := \max_{u\in U} \{- \langle f (t,x,u) , p \rangle - h(t,x,u)\}  \text{ as well as } H(x,p,t) = \tilde{H}(x,p,T-t) \,.
\label{hahahahaha}
\eqn
This is actually the special case of the setting as discussed in Section \ref{sec3} when the set $C = \{0\}$ is a singleton, after we denote $b(\cdot)$ as $u(\cdot)$ instead. In this special case $H(t,x,p)$ is always convex w.r.t. $p$.

One point to note is that, the argument to get either a Lax and a Hopf formula in the general case with differential games as discussed in sec \ref{sec3} shall be similar with the standard assumption on the set of strategy following the causality.

\subsection{Verification of minimization/maximization principles under restricted assumptions} \label{proof_simple}
 
Before we go to our statement of conjectures, let us show that the formula 
in section \ref{sec1} holds for some restricted assumptions for the sake of completeness.  
A proof under less restricted assumptions can be found in \cite{proofimportant}.

In what follows, we first state the set of assumptions that we may use.  
For notational sake, let us write $$\mathcal{H} (x,p,u,s) := h (s, x, u) + \langle p, f (s, x, u ) \rangle \,. $$
Let us consider the following list of assumptions that we will consider:
\begin{enumerate}
\item[(A1)] 
$U$ is a compact convex set in $\mathbb{R}^d$;

\item[(A2)] 
$\mathcal{H} (x,p,u,s)  $ is proper lower semi-continuous and quasi-convex w.r.t. $u$.

\item[(A3)] 
$H (x,p,s) $ as defined in \eqref{hahahahaha} is proper upper semi-continuous and quasi-concave w.r.t. $x$.

\item[(A4)] 
$H (x,p,s) $ is equi-coercive (under parameters $(p,s)$ ) w.r.t. $x$ in the following sense: for all $N >0$, there exists $K$ (independent of $(s,p)$)
\beqnx
| H (x,p,s)  | \geq K 
\eqnx
whenever $\|x\| \geq N$. 

\item[(A5)] 
$g(x) $ is proper lower semi-continous and convex w.r.t. $x$, and is coercive w.r.t. $x$ in the following sense: 
\beqnx
\| g(x) \| \rightarrow \infty \text{ as } \| x \| \rightarrow \infty
\eqnx

\item[(A6)] 
$H (x,p,s) $ is proper concave w.r.t. $x$, and is $H(x,p,s)$ is equi-coercive (under parameters $(x,s)$ ) w.r.t. $p$.

\item[(A7)]
$\mathcal{H} (x,p,u,s) $, $H (x,p,s) $, $g(x)$ and $g^*(p)$ are all in $C^2$ in all its variables.

%\item[(A6)] 
%$H (x,p,s) $ is convex w.r.t. $x$ and is 1-equi-coercive (under the parameters $(p,s)$ ) w.r.t. $x$ in the following sense: for all $N >0$, there exists $K$ (independent of $(p,s)$)
%\beqnx
%| H (x,p,s)  | / \| x \| \geq K 
%\eqnx
%whenever $\|x\| \geq N$.  Likewise $H(x,p,s)$ is 1-equi-coercive (under the parameters $(x,s)$ ) w.r.t. $p$.

%\item[(A7)] 
%$g(x) $ is lower semi-continous and convex w.r.t. $x$, and is 1-coercive w.r.t. $x$ in the following sense:
%\beqnx
%\| g(x) \| / \|x\| \rightarrow \infty \text{ as } \| x \| \rightarrow \infty
%\eqnx

\end{enumerate}

\subsubsection{Verification of minimization principle under restricted assumptions}

Before we proceed, we would also like to state the following lemma directly from definition:
\begin{Lemma}  \label{lolXD2}
Under the assumption (A2), $H(x,p,t)$ as defined in \eqref{hahahahaha} is convex and lower semi-continuous w.r.t. $p$.
\end{Lemma}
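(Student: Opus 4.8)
The plan is to recognize $H(x,\cdot,t)$ as a pointwise supremum of affine functions of $p$ and then invoke the elementary fact that such an envelope is automatically convex and lower semi-continuous. Concretely, I would first unfold the definition: by \eqref{hahahahaha}, for fixed $x\in\mathbb{R}^d$ and $t$,
$$H(x,p,t)=\tilde H(x,p,T-t)=\sup_{u\in U}\bigl\{-\langle f(T-t,x,u),p\rangle-h(T-t,x,u)\bigr\}=-\inf_{u\in U}\mathcal{H}(x,p,u,T-t),$$
where $\mathcal{H}(x,p,u,s)=h(s,x,u)+\langle p,f(s,x,u)\rangle$. For each fixed admissible $u$, the map $\ell_u:p\mapsto-\langle f(T-t,x,u),p\rangle-h(T-t,x,u)$ is affine in $p$ (an inner product with $p$ plus a $p$-independent constant), hence convex and continuous wherever finite, in particular lower semi-continuous.

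Second, I would record the two short verifications that the supremum inherits these properties. For convexity: for $p_0,p_1$ and $\lambda\in[0,1]$ each $\ell_u$ satisfies $\ell_u(\lambda p_0+(1-\lambda)p_1)=\lambda\ell_u(p_0)+(1-\lambda)\ell_u(p_1)\le\lambda H(x,p_0,t)+(1-\lambda)H(x,p_1,t)$, and taking the supremum over $u\in U$ yields convexity of $H(x,\cdot,t)$. For lower semi-continuity: $\operatorname{epi}H(x,\cdot,t)=\bigcap_{u\in U}\operatorname{epi}\ell_u$, and each $\operatorname{epi}\ell_u$ is closed because $\ell_u$ is continuous (or identically $+\infty$); an arbitrary intersection of closed sets is closed, so the epigraph of $H(x,\cdot,t)$ is closed, which is precisely lower semi-continuity. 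Equivalently, a pointwise supremum of lower semi-continuous functions is lower semi-continuous.

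Third, I would use assumption (A2) — really only the ``proper lower semi-continuous in $u$'' part, together with (A1) — to guarantee that $H(x,\cdot,t)$ is proper, so the conclusion is not vacuous: properness of $\mathcal{H}(x,p,\cdot,s)$ supplies some $u_0$ with $\mathcal{H}(x,p,u_0,s)$ finite, whence $H(x,p,t)\ge-\mathcal{H}(x,p,u_0,T-t)>-\infty$; and since $U$ is compact by (A1) and $\mathcal{H}(x,p,\cdot,s)$ is lower semi-continuous and nowhere $-\infty$, the infimum over $u$ is attained and finite, so $H(x,p,t)<+\infty$.

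I do not expect a genuine obstacle here: the lemma is in essence the statement that a conjugate-type supremum of affine maps is convex and closed. The only point requiring care is the bookkeeping when $h$ (hence $\mathcal{H}$) is permitted to take the value $+\infty$ — i.e., verifying that one is genuinely supremizing affine functions over the nonempty effective-domain slice $\{u:h(T-t,x,u)<+\infty\}$ rather than over an empty family; this is exactly where properness from (A2) is needed, and it is the spot where I would be most careful when writing the argument out in full.
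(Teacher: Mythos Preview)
Your proposal is correct and follows essentially the same route as the paper: both arguments rest on the observation that for each fixed $u$ the map $p\mapsto -\langle f(T-t,x,u),p\rangle - h(T-t,x,u)$ is affine in $p$, and then use that a pointwise supremum of affine (hence convex, continuous) functions is convex and lower semi-continuous. Your epigraph-intersection phrasing for lower semi-continuity is equivalent to the paper's one-line ``sup of lsc is lsc''; your additional third step verifying properness via (A1)--(A2) is a nice piece of bookkeeping the paper omits, but note the lemma as stated only claims convexity and lower semi-continuity, so that extra check is not strictly needed.
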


\begin{proof}
Both properties follow from the fact that $\mathcal{H} (p,x,u,s)$ is linear w.r.t. $p$.  In fact, in order to check convexity, 
for all $ 0 \leq \lambda \leq 1$ and $p_1,p_2 \in \mathbb{R}^d$,
\beqnx
&  & H(x,\lambda p_1 + (1-\lambda) p_2 ,t )  \\
&=& \max_{u\in U} \{- \langle f (T-t,x,u) , \lambda p_1 + (1-\lambda) p_2 \rangle - h(T-t,x,u)\}  \\
&=& \max_{u\in U} \{- \langle f (T-t,x,u) , \lambda p_1 \rangle - \lambda h(T-t,x,u)  - \langle f (T-t,x,u) , (1-\lambda) p_2 \rangle - (1-\lambda) h(T-t,x,u) \}  \\
&\leq & \max_{u\in U} \{- \langle f (T-t,x,u) , \lambda p_1 \rangle - \lambda h(T-t,x,u) \}  + \max_{u\in U} \{ - \langle f (T-t,x,u) , (1-\lambda) p_2 \rangle - (1-\lambda) h(T-t,x,u) \}  \\
&= &\lambda H(x, p_1,t)  + (1-\lambda)  H(x, p_2,t ) \,.
\eqnx
Lower semi-continous w.r.t. $p$ follows from the fact that $ H(x,p,t) = \max_{u\in U} \{ - \mathcal{H} (T-t,x,p,u)  \}$ where for all $u$, $- \mathcal{H} (T-t,x,p,u) $ is lower semi-continous w.r.t. $p$.
\end{proof}

With these assumptions at hand, we have the following lemmas. We would like to remark that the following is a more discrete version of the minimax formula appeared in \cite{convexminimax}.

\begin{Lemma} \label{lolXD}

Let  $ (\textbf{X},\textbf{U},\textbf{P}) := \left( \{ x_n\}_{n=0}^{N-1}, \{ u^*_n\}_{n=0}^{N-1},   \{ p^(_n\}_{n =0}^{N-1} \right) \in \mathbb{R}^{3dN}$ and
$$F_1(\textbf{X},\textbf{U},\textbf{P}) := g(x_0)  + \delta \sum_{n=0}^{N-1} h (T-s_n, x_n, u_n ) + \sum_{n=0}^{N-1}  \langle p_n , x_{n+1} - x_n \rangle + \delta \sum_{n=0}^{N-1}  \langle p_n , f (T-s_n, x_n, u_n ) \rangle$$
and
$$\tilde{F_1}(\textbf{X},\textbf{P}) := g(x_0) + \sum_{n=0}^{N-1}   \langle p_n ,x_{n+1} - x_n \rangle  - \delta \sum_{n=0}^{N-1} H(x_n, p_n, s_n) $$
If (A1) and (A2) are satisfied, then we have
\begin{eqnarray}
 \min_{ \{ u_n\}_{n=0}^{N-1} \in U^N }  \inf_{ \{ x_n\}_{n=0}^{N-1} }    \sup_{ \{ p_n\}_{n =0}^{N-1} } F_1 (\textbf{X},\textbf{U},\textbf{P}) =  \inf_{ \{ x_n\}_{n=0}^{N-1} }  \sup_{ \{ p_n\}_{n=0}^{N-1} }  \tilde{F_1}(\textbf{X},\textbf{P})   \,.
\label{lolXDform}
\end{eqnarray}

\end{Lemma}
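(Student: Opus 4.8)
The plan is to exploit the fact that $F_1$ is \emph{separable across the time index} $n$, which collapses the high-dimensional minimax on the left of \eqref{lolXDform} into $N$ scalar minimax problems, each of which is settled by Sion's theorem using only (A1) and (A2). Note in particular that we never need to interchange $\inf_{\textbf{X}}$ with $\sup_{\textbf{P}}$, so no concavity/convexity of $H$ in $(x,p)$ — i.e.\ neither (A3) nor (A6) — enters this lemma.

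First I would rewrite $F_1$ in the compact form
$$F_1(\textbf{X},\textbf{U},\textbf{P}) \;=\; g(x_0) + \sum_{n=0}^{N-1}\psi_n(u_n,p_n), \qquad \psi_n(u_n,p_n) := \langle p_n,\, x_{n+1}-x_n\rangle + \delta\,\mathcal{H}(x_n,p_n,u_n,T-s_n),$$
with $\textbf{X}$ treated as a parameter; this is immediate from $\mathcal{H}(x,p,u,s)=h(s,x,u)+\langle p,f(s,x,u)\rangle$. The structural point is that $u_n$ and $p_n$ occur \emph{only} in the single summand $\psi_n$. Since $\min_{\textbf{U}}$ and $\inf_{\textbf{X}}$ are both infima, the left side of \eqref{lolXDform} equals $\inf_{\textbf{X}}\inf_{\textbf{U}}\sup_{\textbf{P}}F_1$, so it suffices to show, for every fixed $\textbf{X}$, that $\inf_{\textbf{U}}\sup_{\textbf{P}}F_1 = \sup_{\textbf{P}}\inf_{\textbf{U}}F_1$ and then to identify the latter with $\sup_{\textbf{P}}\tilde{F_1}$.

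By the coordinatewise independence just noted, $\sup_{\textbf{P}}$ and $\inf_{\textbf{U}}$ each distribute over the sum, so
$$\inf_{\textbf{U}}\sup_{\textbf{P}}F_1 = g(x_0) + \sum_{n=0}^{N-1}\inf_{u_n\in U}\sup_{p_n\in\mathbb{R}^d}\psi_n(u_n,p_n), \qquad \sup_{\textbf{P}}\inf_{\textbf{U}}F_1 = g(x_0) + \sum_{n=0}^{N-1}\sup_{p_n\in\mathbb{R}^d}\inf_{u_n\in U}\psi_n(u_n,p_n),$$
and the whole statement reduces to the scalar identity $\inf_{u_n\in U}\sup_{p_n}\psi_n = \sup_{p_n}\inf_{u_n\in U}\psi_n$. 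For this I would invoke Sion's minimax theorem on $U\times\mathbb{R}^d$: the map $p_n\mapsto\psi_n(u_n,p_n)$ is affine, hence upper semicontinuous and quasi-concave on the convex set $\mathbb{R}^d$; the map $u_n\mapsto\psi_n(u_n,p_n)$ is proper lower semicontinuous and quasi-convex on $U$ by (A2); and $U$ is compact and convex by (A1). Finally, the definition of $\tilde{H}$ in \eqref{hahahahaha} together with $H(x,p,t)=\tilde{H}(x,p,T-t)$ gives $\inf_{u\in U}\mathcal{H}(x_n,p_n,u,T-s_n) = -H(x_n,p_n,s_n)$, the infimum being attained since $U$ is compact and $\mathcal{H}$ is lower semicontinuous in $u$; hence $\inf_{u_n\in U}\psi_n(u_n,p_n) = \langle p_n,x_{n+1}-x_n\rangle - \delta H(x_n,p_n,s_n)$. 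Re-summing and re-coupling the $p_n$'s turns $g(x_0)+\sum_n\sup_{p_n}\inf_{u_n}\psi_n$ into $\sup_{\textbf{P}}\tilde{F_1}(\textbf{X},\textbf{P})$, and applying $\inf_{\textbf{X}}$ produces exactly the right side of \eqref{lolXDform}.

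The only non-formal step is the scalar minimax swap, and the points needing care there are: (i) $\mathcal{H}$ is permitted to take the value $+\infty$, so one must use a version of Sion's theorem valid for extended-real-valued lower semicontinuous quasi-convex functions (such versions are standard); and (ii) one must check that no inner iterated infimum equals $-\infty$, so that the distribution of $\inf/\sup$ over the finite sum is legitimate — here the properness in (A2) together with the coercivity hypotheses (A4)–(A5) guarantee that each inner extremum lies in $(-\infty,+\infty]$. I would also remark that the outer ``$\min$'' over $\textbf{U}$ is genuinely attained, by compactness of $U^N$ and lower semicontinuity of $\textbf{U}\mapsto\inf_{\textbf{X}}\sup_{\textbf{P}}F_1$.
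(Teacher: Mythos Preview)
Your argument is correct and follows essentially the same route as the paper: fix $\textbf{X}$, apply Sion's minimax theorem to interchange $\inf_{\textbf{U}}$ and $\sup_{\textbf{P}}$ (using (A1) for compactness/convexity of the $u$-domain, (A2) for lsc quasi-convexity in $u$, and affinity in $p$ for usc quasi-concavity), then identify $\inf_{\textbf{U}} F_1 = \tilde{F_1}$ via the definition of $H$, and finally take $\inf_{\textbf{X}}$. Your additional decomposition into $N$ decoupled $(u_n,p_n)$-problems is a harmless refinement---the paper simply applies Sion once on $U^N\times\mathbb{R}^{dN}$---but note that your appeal to (A4)--(A5) in point (ii) is both unnecessary and out of place: only (A1) and (A2) are assumed in this lemma, and they already suffice, since compactness of $U$ and lower semicontinuity of $\mathcal{H}(\cdot,\cdot,u,\cdot)$ in $u$ make $H(x_n,p_n,s_n)$ finite and hence each $\inf_{u_n}\psi_n$ finite.
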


\begin{proof}

Fixing $\textbf{X}$, consider the function $F_1(\textbf{X},\cdot,\cdot): (\textbf{U},\textbf{P})\mapsto F_1(\textbf{X},\textbf{U},\textbf{P})$. 
Since 
$\mathcal{H} (s,p,x,u)$ satisfies (A2), we have $F_1(\textbf{X},\textbf{U},\textbf{P})$ is lower-semicontinous and quasi-convex w.r.t. $U$. Moreover since $U$ satisfies (A1), we have that $U^N$ is compact and convex.  It is clear that since $F$ is linear w.r.t. $P$, $F_1(\textbf{X},\textbf{U},\textbf{P})$ is upper-semicontinous and quasi-concave w.r.t. $P$.   Therefore we may apply Sion's minimax theorem \cite{sion1,sion2} to obtain that for a fixed $X$, we have
\begin{eqnarray*}
  \inf_{ \{ u_n\}_{n=0}^{N-1} \in U }  \sup_{ \{ p_n\}_{n =0}^{N-1} } F_1(\textbf{X},\textbf{U},\textbf{P})  =  \sup_{ \{ p_n\}_{n=0}^{N-1} }   \inf_{ \{ u_n\}_{n=0}^{N-1} \in U } F_1(\textbf{X},\textbf{U},\textbf{P})  \,.
\end{eqnarray*}
Now by definition of $H(t,x,p)$ in \eqref{hahahahaha}
\beqnx
& & \inf_{ \{ u_n\}_{n=0}^{N-1} \in U } F_1(\textbf{X},\textbf{U},\textbf{P}) \\
&=&  g(x_0) - \delta \sum_{n=1}^{N-1} \max_{ u \in U } \left\{ - \langle p_n , f (T- s_n, x_n, u) \rangle - h (T- s_n, x_n, u) \right\} +  \sum_{n=1}^{N-1}   \langle p_n , x_{n+1} - x_n \rangle  \,, \\
&=&  \tilde{F_1}(\textbf{X},\textbf{P})  \,, 
\eqnx
The result now follows by taking infrimum at both hand sides w.r.t $\textbf{X}$ and that $\min$ and $\inf$ can swap.

\end{proof}

Notice Lemma \ref{lolXD} is a version of representation formula with a minimal amount of assumption, and this expressions gives the finite dimensional analogy well-known principle of least action as in e.g. \cite{convexminimax,rockafellar1}.

\begin{Lemma} \label{halolXD} 
If (A1), (A2), (A4), (A6), (A7) are satisfied, then we have
\begin{eqnarray}
& & \min_{ \{ u_n\}_{n=0}^{N-1} \in U^N }  \inf_{ \{ x_n\}_{n=0}^{N-1} }    \sup_{ \{ p_n\}_{n =0}^{N-1} } F_1 (\textbf{X},\textbf{U},\textbf{P})  \notag  \\
&=& \inf_{ v \in \mathbb{R}^d }    \bigg \{ g(x_0) + \delta \sum_{n=0}^{N-1}   \langle p_n , \frac{x_{n+1} - x_n}{\delta} \rangle  - \delta \sum_{n=0}^{N-1} H(x_n, p_n, s_n) :  \notag  \\
& & \qquad \qquad
 \begin{matrix} x_{n+1}(x,v) - x_n(x,v) =  \delta \partial_p H(x_n(x,v), p_n(x,v), s_n), \\ p_{n-1}(x,v) - p_n(x,v) = \delta \partial_x H(x_n(x,v), p_n(x,v), s_n), \\ x_N= x,  p_{N} = v \end{matrix}  \bigg\} \label{hahalolXDform} \,.
\end{eqnarray}

\end{Lemma}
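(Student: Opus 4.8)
The plan is to start from Lemma~\ref{lolXD}, which already reduces the left side of \eqref{hahalolXDform} to $\inf_{\textbf{X}}\sup_{\textbf{P}}\tilde{F_1}(\textbf{X},\textbf{P})$, and then to show that this equals the right side. The key preliminary remark is that, after cancelling $\delta$ against $\tfrac1\delta$, the objective written under the $\inf_{v}$ in \eqref{hahalolXDform} is \emph{literally} $\tilde{F_1}(\textbf{X},\textbf{P})$ evaluated along the discrete bi-characteristic pinned down by the terminal data $x_N=x$, $p_N=v$; so the assertion is precisely that minimizing $\tilde{F_1}$ over all $(\textbf{X},\textbf{P})$ coincides with first restricting $(\textbf{X},\textbf{P})$ to orbits of the discrete Hamiltonian system and then minimizing over the single vector $v$. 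I would prove the two inequalities separately.

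For the inequality ``$\mathrm{RHS}\ge\mathrm{LHS}$'': by Lemma~\ref{lolXD2} together with (A2), $\tilde{F_1}(\textbf{X},\cdot)$ is concave in $\textbf{P}$, and the first constraint $x_{n+1}-x_n=\delta\,\partial_p H(x_n,p_n,s_n)$ is exactly the stationarity condition $\partial_{p_n}\tilde{F_1}=0$; hence along any feasible orbit $(\textbf{X}(v),\textbf{P}(v))$ one has $\tilde{F_1}(\textbf{X}(v),\textbf{P}(v))=\sup_{\textbf{P}}\tilde{F_1}(\textbf{X}(v),\textbf{P})\ge\inf_{\textbf{X}}\sup_{\textbf{P}}\tilde{F_1}$, and taking the infimum over $v$ gives the inequality. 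Here (A6) is what guarantees that the supremum in $\textbf{P}$ is finite and attained, so that a stationary $\textbf{P}$ is indeed a global maximizer.

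For ``$\mathrm{RHS}\le\mathrm{LHS}$'' I would exhibit a single feasible $v$ achieving the value $\inf_{\textbf{X}}\sup_{\textbf{P}}\tilde{F_1}$. Using (A7) and the (implicit) non-degeneracy of $\partial^2_{pp}H$, the equations $\partial_{\textbf{P}}\tilde{F_1}=0$ solve, via the implicit function theorem, for a $C^1$ map $\textbf{X}\mapsto\textbf{P}(\textbf{X})$, and $\sup_{\textbf{P}}\tilde{F_1}(\textbf{X},\cdot)=\tilde{F_1}(\textbf{X},\textbf{P}(\textbf{X}))=:L_\delta(\textbf{X})$. By (A4) and (A6) the function $L_\delta$ is coercive in $\textbf{X}$ (it dominates $\textbf{X}\mapsto\tilde{F_1}(\textbf{X},\mathbf{0})=g(x_0)-\delta\sum_n H(x_n,0,s_n)$, and $-H(\cdot,0,s)\to+\infty$), so $L_\delta$ attains its infimum at some $\textbf{X}^\ast$; by the envelope identity $\nabla_{\textbf{X}}L_\delta=\nabla_{\textbf{X}}\tilde{F_1}\big|_{\textbf{P}=\textbf{P}(\textbf{X})}$ (the $\textbf{P}$-derivative term drops because $\textbf{P}(\textbf{X})$ is the maximizer), stationarity of $L_\delta$ at $\textbf{X}^\ast$ forces $\nabla_{\textbf{X}}\tilde{F_1}(\textbf{X}^\ast,\textbf{P}^\ast)=0$ with $\textbf{P}^\ast:=\textbf{P}(\textbf{X}^\ast)$, i.e. the second Hamilton relations $p_{n-1}-p_n=\delta\,\partial_x H(x_n,p_n,s_n)$ for $n=1,\dots,N-1$ together with the transversality relation coming from the $x_0$-derivative. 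Thus $(\textbf{X}^\ast,\textbf{P}^\ast)$ is a full orbit of the discrete Hamiltonian system with $x_N=x$; choosing $v^\ast:=p^\ast_N$ as the (unique, for $\delta$ small, by a contraction) solution of the remaining relation $p^\ast_{N-1}-v^\ast=\delta\,\partial_x H(x,v^\ast,s_N)$ identifies this orbit with $(\textbf{X}(v^\ast),\textbf{P}(v^\ast))$. Hence $\mathrm{RHS}\le\tilde{F_1}(\textbf{X}(v^\ast),\textbf{P}(v^\ast))=L_\delta(\textbf{X}^\ast)=\inf_{\textbf{X}}\sup_{\textbf{P}}\tilde{F_1}=\mathrm{LHS}$, which closes the chain.

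The main obstacle is the ``$\le$'' direction: it rests on the well-posedness of the two maps $\textbf{X}\mapsto\textbf{P}(\textbf{X})$ and $v\mapsto(\textbf{X}(v),\textbf{P}(v))$ and on attainment of the relevant infimum, which are not literally delivered by (A1),(A2),(A4),(A6),(A7) alone — one needs $\partial^2_{pp}H$ non-degenerate so that bi-characteristics make sense, $\delta$ small enough for the backward-in-time solvability of the discrete Hamiltonian recursion from terminal data, and $g$ bounded below so that the coercivity supplied by (A4)--(A6) really makes $L_\delta$ coercive in the $x_0$ slot. When $g$ is in addition convex (A5), one can circumvent the envelope/coercivity bookkeeping entirely: $\tilde{F_1}$ is then convex in $\textbf{X}$ and concave in $\textbf{P}$, so Sion's theorem yields an interior saddle point at which \emph{both} sets of Hamilton equations hold simultaneously, and the identification with a trajectory is immediate. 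A secondary nuisance is the index bookkeeping at the two ends $n=0$ and $n=N$ (the transversality condition, and the definition of $v^\ast$ as the ``$n=N$'' instance of the costate recursion), which must be matched carefully with the exact range of $n$ over which the constraints in \eqref{hahalolXDform} are imposed.
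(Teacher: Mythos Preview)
Your approach is correct in spirit and the two-inequality split is a clean way to organize the argument, but it differs in structure from the paper's proof. The paper does not split into $\mathrm{RHS}\ge\mathrm{LHS}$ and $\mathrm{RHS}\le\mathrm{LHS}$; instead it peels off $x_0$ from the remaining $\tilde{\textbf{X}}=(x_1,\dots,x_{N-1})$ and, for each fixed $x_0$, runs a dichotomy: either the inner $\inf_{\tilde{\textbf{X}}}\sup_{\textbf{P}}\tilde{F_1}(x_0,\tilde{\textbf{X}},\textbf{P})$ is $+\infty$, or (A6), (A4), (A7) force the existence of a finite saddle point $(\tilde{\textbf{X}}^*(x_0),\textbf{P}^*(x_0))$ satisfying \emph{both} discrete Hamilton relations (the possibility $-\infty$ is excluded by tracing back through Lemma~\ref{lolXD}). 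The identification with the right-hand side then comes from an asserted surjection $v\mapsto x_0$ via $p_N^*(x_0)=v$ over the set of $x_0$'s with a finite saddle value.

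What each approach buys: your $\mathrm{RHS}\ge\mathrm{LHS}$ direction is more transparent than anything in the paper --- the observation that the first bi-characteristic constraint is exactly $\partial_{p_n}\tilde{F_1}=0$, combined with concavity in $\textbf{P}$, immediately gives that each orbit value dominates $\inf_{\textbf{X}}\sup_{\textbf{P}}$. For $\mathrm{RHS}\le\mathrm{LHS}$, your envelope/coercivity route is more explicit about what is actually needed (non-degeneracy of $\partial_{pp}^2H$ for the implicit function theorem, $\delta$ small for the backward recursion, $g$ bounded below for coercivity in $x_0$), whereas the paper's dichotomy hides these issues inside the phrases ``by (A4) and (A7) we either have \ldots'' and the unproven surjection claim. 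In particular, the paper's argument does not single out $\delta$ small or Hessian non-degeneracy, so in that sense your writeup is more honest about the hypotheses really used; conversely, the paper's $x_0$-slicing avoids invoking the implicit function theorem globally in $\textbf{X}$ and handles the $+\infty$ case by fiat, which your coercivity argument has to work harder to rule out. Your remark that (A5) would let one bypass all this via Sion is well taken, and is consistent with the paper's own comment that (A5) is \emph{not} needed for this lemma --- the point being that both proofs are somewhat sketchy without it.
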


\begin{proof}

Let $ (\tilde{ \textbf{X} }, \textbf{P}) := \left( \{ x_n\}_{n=1}^{N-1},   \{ p_n\}_{n =0}^{N-1} \right) \in \mathbb{R}^{(2N-1) d}$.
From by Lemma \ref{lolXD}, since (A1) and (A2), are satisfied, we have that \eqref{lolXDform} holds.  Therefore it remains to show that 
the term $ \inf_{ \{ x_n\}_{n=0}^{N-1} }  \sup_{ \{ p_n\}_{n=0}^{N-1} }  \tilde{F_1}(\textbf{X},\textbf{P}) $ equals to \eqref{hahalolXDform}.  Notice that
$$ \inf_{ \{ x_n\}_{n=0}^{N-1} }  \sup_{ \{ p_n\}_{n=0}^{N-1} }  \tilde{F_1}(\textbf{X},\textbf{P}) = \inf_{ x_0 }   \inf_{ \{ x_n\}_{n=1}^{N-1} }  \sup_{ \{ p_n\}_{n=0}^{N-1} }  \tilde{F_1}( x_0 , \tilde{ \textbf{X} },\textbf{P}) \,. $$

We now wish to argue that under assumptions (A6), (A7) and (A8), for a fixed $x_0$, either that there exists  $(\tilde{\textbf{X}}^*(x_0),\textbf{P}^*(x_0))$ (depending on $x_0$) such that the mini-max problem $ \inf_{ \{ x_n\}_{n=1}^{N-1} }  \sup_{ \{ p_n\}_{n=0}^{N-1} }  \tilde{F_1}( x_0 , \tilde{ \textbf{X} },\textbf{P}) =  \tilde{F_1}( x_0 , \tilde{ \textbf{X} }^*(x_0),\textbf{P}^*(x_0)) $, or one has $\inf_{ \{ x_n\}_{n=1}^{N-1} }  \sup_{ \{ p_n\}_{n=0}^{N-1} }  \tilde{F_1}(\textbf{X},\textbf{P}) = \infty$.

%Recall for a fixed $x_0$, a saddle point $(\tilde{\textbf{X}}^*(x_0),\textbf{P}^*(x_0))$ (depending on $x_0$) is $$ \tilde{F_1}(x_0, \tilde{X},P^*(x_0) )  \leq F_1(x_0, \tilde{X}^*(x_0),P^*(x_0)) \leq  \tilde{F_1}( x_0, \tilde{X}^*(x_0),P)$$ for all $(\tilde{X},P) \in \mathbb{R}^{(2N-1)d}$

In fact, for a fixed set of $(x_0 , \tilde{ \textbf{X} }) $, by (A6), either $ \sup_{ \{ p_n\}_{n=0}^{N-1} }  \tilde{F_1}( x_0 , \tilde{ \textbf{X} },\textbf{P}) $ is attained and thus, by (A7) there is $\text{P}^* ( x_0, \tilde{ \textbf{X} } ) $ smoothly depending on $( x_0, \tilde{ \textbf{X} } )$ s.t. $$x_{n+1}(x,v) - x_n(x,v) =  \delta \partial_p H(x_n(x,v), p_n^*(x,v, ( x_0, \tilde{ \textbf{X} } )  ), s_n)$$ holds, or the supremum is infinity.  Now let us take infrimum over $ \tilde{ \textbf{X} }$.  For $x_0$ such that for all $ \tilde{ \textbf{X} }$ the value $ \sup_{ \{ p_n\}_{n=0}^{N-1} }  \tilde{F_1}( x_0 , \tilde{ \textbf{X} },\textbf{P}) = \infty $, we have that $\inf_{ \{ x_n\}_{n=1}^{N-1} }  \sup_{ \{ p_n\}_{n=0}^{N-1} }  \tilde{F_1}( x_0 , \tilde{ \textbf{X} },\textbf{P})  = \infty$.  Otherwise, for $x_0$ such that there exists $ \tilde{ \textbf{X} }$ with $ \sup_{ \{ p_n\}_{n=0}^{N-1} }  \tilde{F_1}( x_0 , \tilde{ \textbf{X} },\textbf{P}) \neq \infty $, by (A4) and (A7), we again either have $ \tilde{ \textbf{X} }^*$ satisfies $$p_{n-1}^* (x,v, ( x_0, \tilde{ \textbf{X} }^* )  ) - p_n^*(x,v, ( x_0, \tilde{ \textbf{X} }^* )  ) = \delta \partial_x H(x_n^*(x,v, x_0), p_n^*(x,v, ( x_0, \tilde{ \textbf{X} }^* )  ), s_n) \, ,$$ or that $\inf_{ \{ x_n\}_{n=1}^{N-1} }  \sup_{ \{ p_n\}_{n=0}^{N-1} }  \tilde{F_1}( x_0 , \tilde{ \textbf{X} },\textbf{P})  = -\infty$.  However the case that the infrimum get to $-\infty$ will arrive at absurdity since we have tracing back Lemma \ref{lolXD}
\beqnx
 -\infty <  \inf_{ \{ u_0\}_{n=1}^{N-1} }   \inf_{ \{ x_n\}_{n=1}^{N-1} }  \sup_{ \{ p_n\}_{n=0}^{N-1} }  \tilde{F_1}( x_0 , \tilde{ \textbf{X} },\textbf{P}) =  \inf_{ \{ x_n\}_{n=0}^{N-1} }  \sup_{ \{ p_n\}_{n=0}^{N-1} }  \tilde{F_1}(\textbf{X},\textbf{P})   =   -\infty
\eqnx
which arrives at contradiction.

Concluding the above argument, for each $x_0$, either we have $ \inf_{ \{ x_n\}_{n=1}^{N-1} }  \sup_{ \{ p_n\}_{n=0}^{N-1} }  \tilde{F_1}( x_0 , \tilde{ \textbf{X} },\textbf{P}) = \infty$ or $ \inf_{ \{ x_n\}_{n=1}^{N-1} }  \sup_{ \{ p_n\}_{n=0}^{N-1} }  \tilde{F_1}( x_0 , \tilde{ \textbf{X} },\textbf{P})=  \tilde{F_1}( x_0 , \tilde{ \textbf{X} }^*(x_0),\textbf{P}^*(x_0)) $ for some $(\textbf{X}^* (x_0),\textbf{P}^*(x_0)) = (\tilde{\textbf{X}}^* (x_0),\textbf{P}^*(x_0,\tilde{\textbf{X}}^*))$ s.t. 
\beqn
 \begin{cases} x_{n+1}(x,v) - x_n(x,v) =  \delta \partial_p H(x_n(x,v), p_n(x,v), s_n), \\ p_{n-1}(x,v) - p_n(x,v) = \delta \partial_x H(x_n(x,v), p_n(x,v), s_n), \\ x_N= x \end{cases} 
\label{again}
\eqn
for all $n = 0,1,..,N-1$. (Notice the condition as the initial condition $\partial_x g(x_0) = p_0$ do not appear because we fixed a value $x_0$.)
Now for any choice of $(\textbf{X} (x_0),\textbf{P}(x_0))$ satisfying \eqref{again} will give the same value $ \tilde{F_1}( x_0 , \tilde{ \textbf{X} }(x_0),\textbf{P}(x_0))$ by concavity of $\tilde{F_1}( x_0 , \tilde{ \textbf{X} },\textbf{P})$ w.r.t. $\textbf{P}$.
Therefore,
$$
\inf_{ x_0}   \inf_{ \{ x_n\}_{n=1}^{N-1} }  \sup_{ \{ p_n\}_{n=0}^{N-1} }  \tilde{F_1}( x_0 , \tilde{ \textbf{X} },\textbf{P}) =\min \left \{ \infty,  \inf_{ x_0 \in \{ x_0: \exists (\textbf{X}^* (x_0),\textbf{P}^*(x_0)) \text{ satisfying } \eqref{again} \} } \inf_{v_N}   \tilde{F_1}( x_0 , \tilde{ \textbf{X} }^*(x_0),\textbf{P}^*(x_0))
 \right\}$$
The conclusion of the lemma follows from the surjection between $v \in \mathbb{R}^d$ and $ x_0 \in \{ x_0: \exists (\textbf{X}^* (x_0),\textbf{P}^*(x_0)) \text{ satisfying } \eqref{again} \} $ via the correspondence $p_N^*(x_0) = v$.

\end{proof}

We would like to remark that in fact if $\mathcal{H}$ is non-smooth w.r.t. $u$, even $ H(x, p, s) =  \max_{u \in U} \{  - \mathcal{H} (x,p,u,T-s) \} $, in general we do not have $ \partial_p H(x, p, s) = -  \partial_p   \mathcal{H} (x,p,u^*,T-s) \} $ where $u^*$ is such that $ \mathcal{H} (x,p,u^*,T-s)  = \max_{u \in U} \{  - \mathcal{H} (x,p,u,T-s) \} $.  Therefore in general the KKT condition (which is also referred to as Pontryagin's maximum principle \cite{pontryagin} in the continuous case) would be stated using $\mathcal{H}$.  However under appropriate regularity assumption (e.g. (A2), (A6) and (A8)) , we have the two partial derivatives coinciding, ie.. $ \partial_p H(x, p, s) = -  \partial_p   \mathcal{H} (x,p,u^*,T-s) \} $.  
A clear connection made between the maximum principle and dynamic programming is discussed in literature, e.g. in \cite{clarke}, which discussed in the non-smooth case when will the costate variable coincide with a partial generalized gradient of the value function (c.f.  \cite{clarke}) for a clear definition.

\subsubsection{Verification of maximization principle under restricted assumptions}

Using a similar but slightly different argument as in Lemma \ref{lolXD}, we also obtain similarly the following lemma:

\begin{Lemma} \label{hahalolXD}

Write $ (\textbf{X},\textbf{U},\textbf{P}):= \left( \{ x_n\}_{n=0}^{N-1}, \{ u^*_n\}_{n=0}^{N-1},   \{ p^(_n\}_{n =1}^{N} \right) \in \mathbb{R}^{3dN}$, $ (\tilde{\textbf{X}},\textbf{U},\textbf{P}):= \left( \{ x_n\}_{n=1}^{N-1}, \{ u^*_n\}_{n=0}^{N-1},   \{ p^(_n\}_{n =1}^{N} \right) \in \mathbb{R}^{(3N-1)d}$, 
$$F_2(\textbf{X},\textbf{U},\textbf{P}) := g(x_0)  + \delta \sum_{n=0}^{N-1} h (T- s_{n+1}, x_{n+1}, u_{n+1} ) + \sum_{n=0}^{N-1}  \langle p_{n+1} , x_{n+1} - x_n \rangle + \delta \sum_{n=0}^{N-1}  \langle p_{n+1} , f (T- s_{n+1}, x_{n+1}, u_{n+1} ) \rangle \,,$$ and
$$
\tilde{\tilde{F_2}}(\tilde{\textbf{X}},\textbf{P}) := \langle p_N , x \rangle  -g^*(p_1) - \delta \sum_{n=1}^{N} H(x_{n}, p_n, s_{n})  + \sum_{n=1}^{N-1}   \langle p_{n} - p_{n+1}, x_{n}  \rangle 
$$
If (A1), (A2), (A3), (A4) and (A5) are satisfied, then we have
then we have
\begin{eqnarray}
 \min_{ \{ u_n\}_{n=0}^{N-1} \in U^N } 
\inf_{ \{ x_n\}_{n=0}^{N-1} } \sup_{ \{ p_n\}_{n=1}^{N} } F_2(\textbf{X},\textbf{U},\textbf{P})  =   \sup_{ \{ p_n\}_{n=1}^{N} }    \inf_{ \{ x_n\}_{n=1}^{N-1} } \tilde{\tilde{F_2}}(\tilde{\textbf{X}},\textbf{P})   \,. \label{wawawawa} 
\end{eqnarray}
\end{Lemma}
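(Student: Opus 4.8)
The plan is to follow the template of the proof of Lemma~\ref{lolXD} almost verbatim, inserting two extra ingredients — a discrete summation-by-parts rearrangement and a Legendre--Fenchel step in $x_0$ — to account for the reversed order of $\sup$ and $\inf$ on the right-hand side of \eqref{wawawawa} and for the appearance of $g^*$.

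First I would fix $\textbf{X}=\{x_n\}_{n=0}^{N-1}$ and view $F_2$ as a function of $(\textbf{U},\textbf{P})$ alone. The only $\textbf{U}$-dependent part of $F_2$ is $\delta\sum_{n=0}^{N-1}\mathcal{H}(x_{n+1},p_{n+1},u_{n+1},T-s_{n+1})$, so by (A2) it is lower semicontinuous and quasi-convex in $\textbf{U}$, by (A1) the feasible set $U^N$ is convex and compact, and $F_2$ is affine — hence usc and quasi-concave — in $\textbf{P}$. Sion's minimax theorem \cite{sion1,sion2} then gives $\inf_{\textbf{U}}\sup_{\textbf{P}}F_2=\sup_{\textbf{P}}\inf_{\textbf{U}}F_2$ for this fixed $\textbf{X}$, and evaluating the inner infimum termwise with the definition \eqref{hahahahaha} of $H$ (so that $\inf_u\mathcal{H}(x,p,u,T-s)=-H(x,p,s)$) yields
\[
\inf_{\textbf{U}}F_2=g(x_0)+\sum_{n=0}^{N-1}\langle p_{n+1},x_{n+1}-x_n\rangle-\delta\sum_{n=1}^{N}H(x_n,p_n,s_n).
\]
A discrete Abel/summation-by-parts identity, using $x_N=x$, rewrites the coupling term as $\sum_{n=0}^{N-1}\langle p_{n+1},x_{n+1}-x_n\rangle=\langle p_N,x\rangle-\langle p_1,x_0\rangle+\sum_{n=1}^{N-1}\langle p_n-p_{n+1},x_n\rangle$, so that $\inf_{\textbf{U}}F_2=\big(g(x_0)-\langle p_1,x_0\rangle\big)+R(\tilde{\textbf{X}},\textbf{P})$ with $R(\tilde{\textbf{X}},\textbf{P}):=\langle p_N,x\rangle+\sum_{n=1}^{N-1}\langle p_n-p_{n+1},x_n\rangle-\delta\sum_{n=1}^{N}H(x_n,p_n,s_n)$ containing no $x_0$.

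Since $\min$ and $\inf$ commute, the left side of \eqref{wawawawa} now equals $\inf_{(x_0,\tilde{\textbf{X}})}\sup_{\textbf{P}}\big\{g(x_0)-\langle p_1,x_0\rangle+R(\tilde{\textbf{X}},\textbf{P})\big\}$. I would then perform one outer minimax exchange, swapping $\inf_{(x_0,\tilde{\textbf{X}})}$ with $\sup_{\textbf{P}}$; since the integrand is separable in $(x_0,\tilde{\textbf{X}})$, the resulting inner infimum factors as $\inf_{x_0}\{g(x_0)-\langle p_1,x_0\rangle\}+\inf_{\tilde{\textbf{X}}}R(\tilde{\textbf{X}},\textbf{P})=-g^*(p_1)+\inf_{\tilde{\textbf{X}}}R(\tilde{\textbf{X}},\textbf{P})=\inf_{\tilde{\textbf{X}}}\tilde{\tilde{F_2}}(\tilde{\textbf{X}},\textbf{P})$, where $\inf_{x_0}\{g(x_0)-\langle p_1,x_0\rangle\}=-g^*(p_1)$ because $g$ is proper convex lsc by (A5) (and the coercivity in (A5) makes $g^*$ proper, so the step is not vacuous). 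Pulling $\sup_{\textbf{P}}$ back outside then gives precisely the right-hand side of \eqref{wawawawa}.

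The main obstacle is justifying that outer exchange. The domains of $\tilde{\textbf{X}}$ and $\textbf{P}$ are both unbounded; the integrand is affine-plus-concave in $\textbf{P}$ (here $-H(\cdot,p,\cdot)$ is concave in $p$ by Lemma~\ref{lolXD2}, from (A2)), but on the $(x_0,\tilde{\textbf{X}})$ side one only has convexity of $g$ (A5) together with the \emph{quasi}-concavity (A3) of $H$ in $x$. One must therefore use the equi-coercivity of $H$ in $x$ (A4) and the coercivity of $g$ (A5) to restrict, without loss, to compact sublevel sets on which a minimax theorem (Sion, or a Slater/stability argument \cite{rockafellar}) applies. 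This is exactly the ``equality in the minimax inequality'' difficulty flagged in Section~\ref{sec1}, and the reason the general statement under weaker hypotheses is deferred to \cite{proofimportant}.
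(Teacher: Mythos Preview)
Your proposal is correct and follows essentially the same route as the paper's own proof: a first Sion swap of $\inf_{\textbf{U}}$ and $\sup_{\textbf{P}}$ with $\textbf{X}$ frozen (via (A1),(A2) and linearity in $\textbf{P}$), evaluation of $\inf_{\textbf{U}}F_2$ through the definition of $H$, an Abel summation-by-parts to isolate $g(x_0)-\langle p_1,x_0\rangle$, and then a second minimax exchange of $\inf_{\textbf{X}}$ with $\sup_{\textbf{P}}$ justified by (A3) on the $\textbf{X}$ side, Lemma~\ref{lolXD2} on the $\textbf{P}$ side, and a compactification of the $\textbf{X}$-domain via (A4),(A5), after which the inner $\inf_{x_0}$ collapses to $-g^*(p_1)$. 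The paper carries out the compactification step slightly more explicitly (writing $\inf_{\textbf{X}}=\inf_{\textbf{X}\in C^N}$ for some compact convex $C^N$, applying Sion there, then removing the restriction), but your identification of this as the crux and of the hypotheses that drive it matches the paper exactly.
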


\begin{proof}

The first part of the argument goes similar as in the proof of the previous lemma. 
In fact, fixing $\textbf{X}$, with the map $F_2 (\textbf{X},\cdot,\cdot): (\textbf{U},\textbf{P})\mapsto F_2(\textbf{X},\textbf{U},\textbf{P})$ being lower-semicontinous and quasi-convex w.r.t. $U$ (from (A2)) and upper-semicontinous and quasi-concave w.r.t. $P$ (from linearity), as well as $U^N$ being compact and convex (from (A1)), we have again that:
\begin{eqnarray*}
  \inf_{ \{ u_n\}_{n=0}^{N-1} \in U }  \sup_{ \{ p_n\}_{n =1}^{N} } F_2(\textbf{X},\textbf{U},\textbf{P})  =  \sup_{ \{ p_n\}_{n=1}^{N} }   \inf_{ \{ u_n\}_{n=0}^{N-1} \in U } F_2(\textbf{X},\textbf{U},\textbf{P})  \,.
\end{eqnarray*}
Again from of $H(t,x,p)$ in \eqref{hahahahaha}, the same argument as in the previous lemma gives
\beqnx
 \inf_{ \{ u_n\}_{n=0}^{N-1} \in U } F_2(\textbf{X},\textbf{U},\textbf{P})  =  \tilde{F_2}(\textbf{X},\textbf{P})  \,, 
\eqnx
where
$$
\tilde{F_2}(\textbf{X},\textbf{P}) :=
g(x_0)  -  \langle p_1 , x_0 \rangle  - \delta \sum_{n=0}^{N-1} H(x_{n+1}, p_{n+1}, s_{n+1})  +  \sum_{n=1}^{N-1}   \langle p_n - p_{n+1} , x_{n}  \rangle  + \langle p_{N} , x \rangle 
$$
Hence  swapping $\min$ and $\inf$, we get
\begin{eqnarray*}
 \min_{ \{ u_n\}_{n=0}^{N-1} \in U^N } \inf_{ \{ x_n\}_{n=0}^{N-1} } \sup_{ \{ p_n\}_{n=1}^{N} } F_2(\textbf{X},\textbf{U},\textbf{P})  =   \inf_{ \{ x_n\}_{n=0}^{N-1} }  \sup_{ \{ p_n\}_{n=1}^{N} }   \tilde{F_2}(\textbf{X},\textbf{P})  \,.
\end{eqnarray*}
after an application of summation by part.
Now since $H (s,x,p) $ satisfies (A3), we have $  \tilde{F_2}(\textbf{X},\textbf{P})$ is lower-semicontinous and quasi-convex w.r.t. $X$. 
Since (A4) and (A5) are satisfied, there exists a compact (and convex, w.l.o.g.) $C^N \subset \mathbb{R}^{dN}$  such that
\beqnx
\inf_{ \{ x_n\}_{n=0}^{N-1} }  \sup_{ \{ p_n\}_{n=1}^{N} }   \tilde{F_2}(\textbf{X},\textbf{P})  = \inf_{ \{ x_n\}_{n=0}^{N-1}  \in C^N }  \sup_{ \{ p_n\}_{n=1}^{N} }   \tilde{F_2}(\textbf{X},\textbf{P})
\eqnx
From Lemma \ref{lolXD2} $\tilde{F_2}(X,U,P)$ is upper-semicontinous and quasi-concave w.r.t. $P$.   Therefore we may apply Sion's minimax theorem \cite{sion1,sion2} to get
\begin{eqnarray*}
 \inf_{ \{ x_n\}_{n=0}^{N-1}  \in C^N }  \sup_{ \{ p_n\}_{n=1}^{N} }   \tilde{F_2}(\textbf{X},\textbf{P}) =  \sup_{ \{ p_n\}_{n=1}^{N} }  \inf_{ \{ x_n\}_{n=0}^{N-1}  \in C^N }   \tilde{F_2}(\textbf{X},\textbf{P})
\end{eqnarray*}
and in fact $ \inf_{ \{ x_n\}_{n=0}^{N-1}  \in C^N }  $ can be replaced by $ \inf_{ \{ x_n\}_{n=0}^{N-1}  \in C^N } $.  Now considering (A4) and (A5) again, we obtain
$$  \sup_{ \{ p_n\}_{n=1}^{N} }  \inf_{ \{ x_n\}_{n=0}^{N-1}  \in C^N }   \tilde{F_2}(\textbf{X},\textbf{P}) =   \sup_{ \{ p_n\}_{n=1}^{N} }  \inf_{ \{ x_n\}_{n=0}^{N-1} }   \tilde{F_2}(\textbf{X},\textbf{P})   \,. $$
Our conclusion now comes from the fact that
$$   \inf_{  x_0 }   \tilde{F_2}(\textbf{X},\textbf{P})   =  \tilde{\tilde{F_2}}(\tilde{\textbf{X}},\textbf{P}) $$
by the definition of Fenchel-Legendre transform.
\end{proof}

\begin{Lemma}  \label{hahalolXDsayonara}

If (A1), (A2), (A4),(A5), (A6), (A7) are satisfied, then we have
\begin{eqnarray}
& &  \min_{ \{ u_n\}_{n=0}^{N-1} \in U^N } 
\inf_{ \{ x_n\}_{n=0}^{N-1} } \sup_{ \{ p_n\}_{n=1}^{N} } F_2(\textbf{X},\textbf{U},\textbf{P})   \notag  \\
&=& \sup_{ v \in \mathbb{R}^d }    \bigg \{  \langle p_N , x \rangle  -g^*(p_1) - \delta \sum_{n=1}^{N} H(x_{n}, p_n, s_{n})  + \delta \sum_{n=1}^{N-1}   \langle \frac{p_{n} - p_{n+1}}{\delta} , x_{n}  \rangle   :  \notag \\
& & \qquad \qquad
 \begin{matrix} x_{n}(x,v) - x_{n-1}(x,v) =  \delta \partial_p H(x_{n}(x,v), p_n(x,v), t_{n}), \\ p_{n}(x,v) - p_{n+1}(x,v) =  \delta \partial_x H(x_{n}(x,v), p_{n}(x,v), t_{n}), \\ x_N= x,  p_N = v \end{matrix}  \bigg\}   \label{hahalolXDform2} \,.
\end{eqnarray}

\end{Lemma}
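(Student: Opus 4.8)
The plan is to follow the proof of Lemma~\ref{halolXD} almost verbatim, with the inner infimum over the state variables and the outer supremum over the co-state variables now appearing in the opposite order. Since (A7) makes $H$ smooth and (A6) makes it concave w.r.t.\ $x$, assumption (A3) holds automatically, so under (A1), (A2), (A4), (A5) we may invoke Lemma~\ref{hahalolXD}: identity \eqref{wawawawa} reduces the left-hand side of \eqref{hahalolXDform2} to $\sup_{\{p_n\}_{n=1}^N}\inf_{\{x_n\}_{n=1}^{N-1}}\tilde{\tilde{F_2}}(\tilde{\textbf{X}},\textbf{P})$, and it remains only to show that this min--max equals the constrained supremum on the right of \eqref{hahalolXDform2}.

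First I would study the inner problem $\inf_{\tilde{\textbf{X}}}\tilde{\tilde{F_2}}(\tilde{\textbf{X}},\textbf{P})$ for a fixed $\textbf{P}$. The $\tilde{\textbf{X}}$-dependence of $\tilde{\tilde{F_2}}$ sits in the convex (by (A6)) terms $-\delta\sum_{n=1}^N H(x_n,p_n,s_n)$ and in the linear terms $\sum_{n=1}^{N-1}\langle p_n-p_{n+1},x_n\rangle$, so $\tilde{\textbf{X}}\mapsto\tilde{\tilde{F_2}}(\tilde{\textbf{X}},\textbf{P})$ is convex, and $C^2$ by (A7). Using the equi-coercivity of $H$ w.r.t.\ $x$ in (A4), either this infimum is $-\infty$, or it is attained at a stationary point $\tilde{\textbf{X}}^*(\textbf{P})$, which by $C^2$-ness satisfies $\partial_{x_n}\tilde{\tilde{F_2}}=0$, i.e.\ $p_n-p_{n+1}=\delta\,\partial_x H(x_n,p_n,s_n)$ for $n=1,\dots,N-1$ --- precisely the co-state recursion in \eqref{hahalolXDform2}. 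By (A7) and the implicit function theorem, $\tilde{\textbf{X}}^*$ depends smoothly on $\textbf{P}$ wherever it exists.

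Next I would take the supremum over $\textbf{P}$. The $\textbf{P}$ for which the inner value is $-\infty$ do not contribute and are discarded exactly as in Lemma~\ref{halolXD}; on the complement, $\textbf{P}\mapsto\tilde{\tilde{F_2}}(\tilde{\textbf{X}}^*(\textbf{P}),\textbf{P})$ is concave --- $-H$ is concave w.r.t.\ $p$ by Lemma~\ref{lolXD2}, $-g^*$ is concave since $g^*$ is convex, the rest being linear --- and $C^2$. The equi-coercivity of $H$ w.r.t.\ $p$ in (A6) together with the coercivity of $g^*$ (from (A5)) forces the supremum to be $+\infty$ --- impossible since the original min--max $\min\inf\sup F_2$ is finite, as argued for the previous lemma --- or attained at a stationary point, where $\partial_{p_n}\tilde{\tilde{F_2}}=0$ yields, after the discrete summation-by-parts bookkeeping, $x_n-x_{n-1}=\delta\,\partial_p H(x_n,p_n,s_n)$ for $n=2,\dots,N$ with $x_N=x$, and at $n=1$ the same recursion with $x_0:=\partial g^*(p_1)$, equivalently $p_1\in\partial g(x_0)$. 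Since $\tilde{\tilde{F_2}}$ is convex in $\tilde{\textbf{X}}$ and concave in $\textbf{P}$, every pair $(\tilde{\textbf{X}},\textbf{P})$ solving this coupled forward--backward system returns the same value. Finally, parameterizing the admissible $\textbf{P}$ by the shooting variable $v:=p_N\in\mathbb{R}^d$ (with $x_N=x$ fixed, the sweep then determines all remaining $x_n$ and $p_n$) turns $\sup_{\textbf{P}}$ into $\sup_{v\in\mathbb{R}^d}$, and $\delta\langle(p_n-p_{n+1})/\delta,x_n\rangle=\langle p_n-p_{n+1},x_n\rangle$ matches the functional in \eqref{hahalolXDform2}, completing the identification.

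The main obstacle, as in Lemma~\ref{halolXD}, is the careful handling of the $\pm\infty$ alternatives: ruling out $\sup_{\textbf{P}}=+\infty$ from finiteness of the original min--max, showing the inner infimum is attained (so that (A4), the equi-coercivity half of (A6), and the coercivity of $g^*$ from (A5) must be combined correctly), and obtaining smooth dependence of the inner minimizer on $\textbf{P}$ via (A7) and the implicit function theorem. A secondary, purely bookkeeping, point is matching the endpoint contributions --- the terms $g^*(p_1)$ and $\langle p_N,x\rangle$, and the $n=1$ and $n=N$ boundary relations --- against the stationarity conditions after summation by parts.
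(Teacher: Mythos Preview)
Your proposal is correct and follows essentially the same route as the paper: both reduce, via Lemma~\ref{hahalolXD} (using that (A6)+(A7) imply (A3)), to $\sup_{\textbf{P}}\inf_{\tilde{\textbf{X}}}\tilde{\tilde{F_2}}$, and then mirror the argument of Lemma~\ref{halolXD} to identify this with the constrained supremum over $v$.

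The only noteworthy difference is organizational. The paper first separates out $p_1$, writing $\sup_{\textbf{P}}=\sup_{p_1}\sup_{\tilde{\textbf{P}}}$, and for each fixed $p_1$ argues the existence of a saddle $(\tilde{\textbf{X}}^*(p_1),\tilde{\textbf{P}}^*(p_1))$ satisfying the bi-characteristic recursions \emph{without} imposing stationarity in $p_1$; this leaves $p_1$ as the genuine free parameter, which is then reparameterized as $v=p_N^*(p_1)$ via a surjection argument. You instead optimize over all of $\textbf{P}$ at once, obtain the full stationarity system (including $x_0=\partial g^*(p_1)$), and then parameterize by $v:=p_N$. Your route is valid because (i) for any $v$, the bi-characteristic $\textbf{P}(v)$ is a feasible $\textbf{P}$ and the inner infimum over $\tilde{\textbf{X}}$ is attained precisely at the bi-characteristic $\tilde{\textbf{X}}(v)$ (convexity in $\tilde{\textbf{X}}$ plus the co-state recursion being the inner stationarity), giving $\sup_{\textbf{P}}\geq\sup_v$; and (ii) the global maximizer $\textbf{P}^*$ satisfies all stationarity conditions, hence in particular the bi-characteristic recursions, so $\textbf{P}^*=\textbf{P}(p_N^*)$ lies in the restricted family, giving $\sup_{\textbf{P}}\leq\sup_v$. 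This two-sided inequality is implicit in your final sentence but would benefit from being spelled out; the paper's device of isolating $p_1$ makes the one-parameter freedom visible from the start and thereby sidesteps this step. Otherwise the arguments coincide.
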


\begin{proof}

Let $ (\tilde{ \textbf{X} }, \tilde{\textbf{P} } ) := \left( \{ x_n\}_{n=1}^{N-1},   \{ p_n\}_{n =0}^{N-1} \right) \in \mathbb{R}^{(2N-2) d}$.
Again since (A6) and (A7) imply (A3), from Lemma \ref{hahalolXD},  we have that \eqref{wawawawa} holds.  
Therefore it remains to show that 
the term $ \sup_{ \{ p_n\}_{n=1}^{N} }    \inf_{ \{ x_n\}_{n=1}^{N-1} } \tilde{\tilde{F_2}}(\tilde{\textbf{X}},\textbf{P})  $ equals to \eqref{hahalolXDform2}.  Notice again that
$$\sup_{ \{ p_n\}_{n=1}^{N} }    \inf_{ \{ x_n\}_{n=1}^{N-1} } \tilde{\tilde{F_2}}(\tilde{\textbf{X}},\textbf{P})  = \sup_{ p_1 }   \sup_{ \{ p_n\}_{n=2}^{N} }    \inf_{ \{ x_n\}_{n=1}^{N-1} } \tilde{\tilde{F_2}}(\tilde{\textbf{X}},p_1,\tilde{\textbf{P}})   \,. $$
Now as in lemma \ref{hahalolXD}, we now wish to argue that under assumptions (A6), (A7) and (A8), for a fixed $p_1$, either that there exists  $(\tilde{\textbf{X}}^*(p_1),\tilde{\textbf{P}}^*(p_1))$ (depending on $v_1$) such that the max-min problem $ \sup_{ \{ p_n\}_{n=2}^{N} }    \inf_{ \{ x_n\}_{n=1}^{N-1} } \tilde{\tilde{F_2}}(\tilde{\textbf{X}},p_1,\tilde{\textbf{P}})  = 
\tilde{\tilde{F_2}}(\tilde{ \textbf{X} }^*(p_1),p_1,\tilde{ \textbf{P} }^*(p_1))$, or one has $ \sup_{ \{ p_n\}_{n=2}^{N} }    \inf_{ \{ x_n\}_{n=1}^{N-1} } \tilde{\tilde{F_2}}(\tilde{\textbf{X}},p_1,\tilde{\textbf{P}})  = -\infty$.

Now, for a fixed set of $(p_1 , \tilde{ \textbf{P} }) $, by (A4) and (A5), either $ \inf_{ \{ x_n\}_{n=1}^{N-1} }  \tilde{\tilde{F_2}}(\tilde{\textbf{X}},p_1,\tilde{\textbf{P}}) $ is attained and thus, by (A7) there is $\tilde{ \text{X}}^* ( p_1, \tilde{ \textbf{P} } ) $ smoothly depending on $( x_0, \tilde{ \textbf{X} } )$ s.t. 
$$p_{n}^* (x,v ) - p_{n+1}^*(x,v ) = \delta \partial_x H(x_n^*(x,v,  ( p_1, \tilde{ \textbf{P} } ) ), p_n(x,v ), s_n)  \, ,$$
holds, or the infrimum is minus infinity.  Now let us take supremum over $ \tilde{ \textbf{P} }$.  For $p_1$ such that for all $ \tilde{ \textbf{P} }$ the value $ \inf_{ \{ p_n\}_{n=0}^{N-1} }  \tilde{\tilde{F_2}}( \tilde{ \textbf{X} },  p_1 ,\tilde{\textbf{P}}) =  - \infty $, we have that $ \sup_{ \{ p_n\}_{n=2}^{N} }    \inf_{ \{ x_n\}_{n=1}^{N-1} } \tilde{\tilde{F_2}}(\tilde{\textbf{X}},p_1,\tilde{\textbf{P}})   = - \infty$.  Otherwise, for $p_1$ such that there exists $ \tilde{ \textbf{P} }$ with $\inf_{ \{ x_n\}_{n=1}^{N-1} } \tilde{\tilde{F_2}}(\tilde{\textbf{X}},p_1,\tilde{\textbf{P}})  \neq  - \infty $, by (A6) and (A7), we again either have $ \tilde{ \textbf{X} }^*$ satisfies 
$$x_{n}^* (x,v, ( p_1, \tilde{ \textbf{P} }^* )  ) - x_{n-1}^*(x,v, ( p_1, \tilde{ \textbf{P} }^* )  ) = \delta \partial_p H(x_n^*(x,v, ( p_1, \tilde{ \textbf{P} }^* ) ), p_n^*(x,v, p_1 ), s_n) \, ,$$
or that $ \sup_{ \{ p_n\}_{n=2}^{N} }  \inf_{ \{ x_n\}_{n=1}^{N-1} } \tilde{\tilde{F_2}}(\tilde{\textbf{X}},p_1,\tilde{\textbf{P}})  =  \infty$.  However again case that the supremum get to $\infty$ will arrive at absurdity since we have tracing back Lemma \ref{lolXD}
\beqnx
\infty >  \inf_{ \{ u_0\}_{n=1}^{N-1} }   \inf_{ \{ x_n\}_{n=1}^{N-1} }  \sup_{ \{ p_n\}_{n=0}^{N-1} }  \tilde{F_1}( x_0 , \tilde{ \textbf{X} },\textbf{P}) =   \sup_{ \{ p_n\}_{n=1}^{N} }  \inf_{ \{ x_n\}_{n=1}^{N-1} } \tilde{\tilde{F_2}}(\tilde{\textbf{X}},p_1,\tilde{\textbf{P}})  =  \infty
\eqnx
which arrives at contradiction.

Concluding the above argument, for each $p_1$, either we have $ \sup_{ \{ p_n\}_{n=2}^{N} }    \inf_{ \{ x_n\}_{n=1}^{N-1} } \tilde{\tilde{F_2}}(\tilde{\textbf{X}},p_1,\tilde{\textbf{P}})   = - \infty$ or $  \sup_{ \{ p_n\}_{n=2}^{N} }    \inf_{ \{ x_n\}_{n=1}^{N-1} } \tilde{\tilde{F_2}}(\tilde{\textbf{X}},p_1,\tilde{\textbf{P}})  = 
\tilde{\tilde{F_2}}(\tilde{ \textbf{X} }^*(p_1),p_1,\tilde{ \textbf{P} }^*(p_1)) $ for some $(\textbf{X}^* (p_1),\textbf{P}^*(p_1)) = (\tilde{\textbf{X}}^* (p_1),\textbf{P}^*(p_1,\tilde{\textbf{X}}^*))$ s.t. 
\beqn
 \begin{cases}
x_{n}(x,v) - x_{n-1}(x,v) =  \delta \partial_p H(x_{n}(x,v), p_n(x,v), t_{n}), \\ p_{n}(x,v) - p_{n+1}(x,v) =  \delta \partial_x H(x_{n}(x,v), p_{n}(x,v), t_{n}), \\ x_N= x
\end{cases} 
\label{againaa}
\eqn
for all $n = 0,1,..,N-1$. (Notice the condition as the initial condition $\partial_x g^*(p_1) = x_1$ do not appear because we fixed a value $p_1$.)
Now for any choice of $(\textbf{X} (x_0),\textbf{P}(x_0))$ satisfying \eqref{again} will give the same value $ \tilde{F_1}( x_0 , \tilde{ \textbf{X} }(x_0),\textbf{P}(x_0))$ by concavity of $\tilde{F_1}( x_0 , \tilde{ \textbf{X} },\textbf{P})$ w.r.t. $\textbf{P}$.
Therefore,
$$
\sup_{ p_1 }   \sup_{ \{ p_n\}_{n=2}^{N} }    \inf_{ \{ x_n\}_{n=1}^{N-1} } \tilde{\tilde{F_2}}(\tilde{\textbf{X}},p_1,\tilde{\textbf{P}})   =\max \left \{ - \infty,  \sup_{ p_1 \in \{ p_1: \exists (\textbf{X}^* (p_1),\textbf{P}^*(p_1)) \text{ satisfying } \eqref{againaa} \} } \sup_{v_N} 
\tilde{\tilde{F_2}}(\tilde{\textbf{X}}^*,p_1,\tilde{\textbf{P}}^* )
 \right\}$$
The conclusion of the lemma follows from the surjection between $v \in \mathbb{R}^d$ and $ p_1 \in \{ p_1: \exists (\textbf{X}^* (p_1),\textbf{P}^*(p_1)) \text{ satisfying } \eqref{againaa} \}  $ via the correspondence $p_N^*(p_1) = v$.

\end{proof}

\subsection{Generalized Lax minimization principle for Convex/Concave Hamiltonian} \label{con/con_ham}

We first describe how we obtain to a Lax formula under restricted assumptions and applying Lemma \ref{lolXD} and Lemma \ref{hahalolXD}:
\beqnx
 \varphi(x,t) :=
U(x,T-t) &:=& \inf_{u \in \mathcal{U}(T-t)} \left \{ \int_{T-t}^T h (s, x(s), u(s)) \, ds + g(x(T))  \right\}    \,.
%&=& \inf_{u \in \mathcal{U}(T-t)} \left \{ \int_{0}^t h (T-s, x(T-s), u(T-s)) \, ds + g(x(T))  \right\}   \,,
\eqnx
We devive our formula as follows.
Following \cite{boyd}, we shall first consider the following discretization (approximation) for a given $\delta$ such that $\delta N = t$, by denoting $s_n = \delta n $ and $x_N = x$ (and flipping the sign),
$$ \varphi(x,t)
\approx \varphi^1_N(x,t) $$
where
\beqnx
 & & \varphi^1_N(x,t)\\
&:=& \min_{ \{ u_n\}_{n=0}^{N-1} \in U} \left \{ g(x_0)  + \delta \sum_{n=0}^{N-1} h (T-s_n, x_n, u_n )  : x_{n+1} - x_n = - \delta f (T-s_n, x_n, u_n ) \text{ for }
n=0,\ldots,N-1 \,, x_N = x  \right\} \,, \\
&=& \min_{ \{ u_n\}_{n=0}^{N-1} \in U^N }  \inf_{ \{ x_n\}_{n=0}^{N-1} }    \sup_{ \{ p_n\}_{n =0}^{N-1} } F_1 (\textbf{X},\textbf{U},\textbf{P})  \,.
\eqnx
where $F_1$ is given as in Lemma \ref{lolXD}, and the second equality comes from reformulating the problem with Lagrange multiplier.

Therefore, applying Lemma \ref{lolXD}  and Lemma \ref{hahalolXD}, we get the following:

%With minimal assumptions of (A1) and (A2), we have from Lemma \ref{lolXD} that
%\begin{eqnarray*}
%\varphi^1_N(x,t) =  \inf_{ \{ x_n\}_{n=0}^{N-1} }  \sup_{ \{ p_n\}_{n=0}^{N-1} }  \tilde{F_1}(\textbf{X},\textbf{P})   \,.
%\end{eqnarray*}
%which is a discrete version of the formula in \cite{convexminimax}.
%If furthermore that (A4), (A6), (A7) are satisfied, applying Lemma \ref{hahalolXD}, then we have the the following.

\begin{Theorem}
If (A1), (A2) are satisifed, then 
\begin{eqnarray*}
 \varphi^1_N(x,t) &=&  \inf_{ \{ x_n\}_{n=0}^{N-1} }  \sup_{ \{ p_n\}_{n=0}^{N-1} }  \tilde{F_1}(\textbf{X},\textbf{P})   \,.
\end{eqnarray*}
If (A1), (A2), (A4), (A6), (A7) are satisfied, then we have
\begin{eqnarray}
 \varphi^1_N(x,t) &=& \inf_{ v \in \mathbb{R}^d }    \bigg \{ g(x_0) + \delta \sum_{n=0}^{N-1}   \langle p_n , \frac{x_{n+1} - x_n}{\delta} \rangle  - \delta \sum_{n=0}^{N-1} H(x_n, p_n, s_n) :  \notag  \\
& & \qquad \qquad
 \begin{matrix} x_{n+1}(x,v) - x_n(x,v) =  \delta \partial_p H(x_n(x,v), p_n(x,v), s_n), \\ p_{n-1}(x,v) - p_n(x,v) = \delta \partial_x H(x_n(x,v), p_n(x,v), s_n), \\ x_N= x,  p_{N} = v \end{matrix}  \bigg\} \,.
\end{eqnarray}
\end{Theorem}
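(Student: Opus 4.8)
The statement is, in effect, a corollary of Lemmas \ref{lolXD} and \ref{halolXD}; the only genuinely new ingredient is the passage from the constrained discrete control problem defining $\varphi^1_N(x,t)$ to the unconstrained min--inf--sup of $F_1$. So the plan is: (i) dualize the equality constraints to reach $\min_{\{u_n\}}\inf_{\{x_n\}}\sup_{\{p_n\}}F_1$; (ii) apply Lemma \ref{lolXD} for the first assertion; (iii) apply Lemma \ref{halolXD} for the second. Concretely, in Step (i) I start from
\[
\varphi^1_N(x,t)=\min_{\{u_n\}\in U^N}\Big\{g(x_0)+\delta\sum_{n=0}^{N-1}h(T-s_n,x_n,u_n)\ :\ x_{n+1}-x_n=-\delta f(T-s_n,x_n,u_n),\ x_N=x\Big\},
\]
and attach a multiplier $p_n\in\mathbb{R}^d$ to the $n$-th equality. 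Using the elementary identity
\[
\sup_{p\in\mathbb{R}^d}\langle p,\,w\rangle=\begin{cases}0,& w=0,\\ +\infty,& w\neq 0,\end{cases}
\]
the constrained infimum over $\{x_n\}$ becomes $\inf_{\{x_n\}}\sup_{\{p_n\}}\big[g(x_0)+\delta\sum h(\cdots)+\sum\langle p_n,\,x_{n+1}-x_n+\delta f(\cdots)\rangle\big]$, which after grouping the $\langle p_n,x_{n+1}-x_n\rangle$ and $\delta\langle p_n,f(\cdots)\rangle$ terms is exactly $\inf_{\{x_n\}}\sup_{\{p_n\}}F_1(\mathbf{X},\mathbf{U},\mathbf{P})$ with $F_1$ the functional of Lemma \ref{lolXD}. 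Since $U^N$ is compact by (A1), $\min_{\{u_n\}}$ and $\inf_{\{x_n\}}$ may be interchanged, giving $\varphi^1_N(x,t)=\min_{\{u_n\}}\inf_{\{x_n\}}\sup_{\{p_n\}}F_1$.

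\textbf{Invoking the lemmas.} For the first assertion, (A1)--(A2) are exactly the hypotheses of Lemma \ref{lolXD}, whose conclusion \eqref{lolXDform} rewrites $\min_{\{u_n\}}\inf_{\{x_n\}}\sup_{\{p_n\}}F_1$ as $\inf_{\{x_n\}}\sup_{\{p_n\}}\tilde{F_1}(\mathbf{X},\mathbf{P})$; combined with Step (i) this is the claim. For the second assertion, (A1),(A2),(A4),(A6),(A7) are the hypotheses of Lemma \ref{halolXD}, whose conclusion \eqref{hahalolXDform} further rewrites this inf--sup as the infimum over $v\in\mathbb{R}^d$ of the same functional evaluated along the trajectories of the discrete Hamiltonian system $x_{n+1}-x_n=\delta\partial_p H$, $p_{n-1}-p_n=\delta\partial_x H$, $x_N=x$, $p_N=v$; substituting into Step (i) finishes the proof.

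\textbf{Main obstacle.} The heavy lifting is already inside the two lemmas (Sion's theorem for the min--sup swap, and the coercivity/stationarity argument of Lemma \ref{halolXD} that replaces the free endpoint $x_0$ by the initial co-state $v=p_N$ via a surjection), so for this theorem the real care needed is bookkeeping: matching the sign convention $x_{n+1}-x_n=-\delta f$ against the form appearing inside $F_1$ and $\tilde{F_1}$, aligning the index ranges of the multiplier sum and the quadrature sum (the statement writes $\sum_{n=0}^{N-1}$ while some intermediate steps use $\sum_{n=1}^{N-1}$), and checking that the assumptions actually used by Lemma \ref{halolXD} are exactly those available here --- in particular that (A4) together with (A6) genuinely rules out the degenerate case in which the inf--sup equals $-\infty$, since the reparametrization by $v$ depends on that exclusion.
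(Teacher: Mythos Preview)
Your proposal is correct and follows essentially the same route as the paper: the paper also observes that the second equality in the definition of $\varphi^1_N(x,t)$ ``comes from reformulating the problem with Lagrange multiplier,'' and then simply states that the theorem follows by applying Lemma~\ref{lolXD} and Lemma~\ref{halolXD}. One minor remark: the interchange of $\min_{\{u_n\}}$ and $\inf_{\{x_n\}}$ does not actually require compactness of $U^N$ --- two nested infima always commute --- so that part of your justification is unnecessary (though harmless).
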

\noindent We would like to remark that (A5) is not needed for the validity of the above formulae.

However, we notice that the resulting formula that we conjectured seems to be correct beyond these assumptions, as the numerical results show (especially when we take the minimum over all the paths satisfying the KKT conditions). We hope to get rigorous criteria for these formula to hold in the future.  In fact, passing to the limit, in the special case when $H(x,p,t)$ is smooth also w.r.t $p$, we conjecture the following Lax formula.

\begin{Conjecture} \label{conj1} Assume $H(x,p,t) \in C^2$ and is convex w.r.t. $p$, and (A5) is satisfied, then there exists $t_0$ such that the viscosity solution to \eqref{HJeqn}-\eqref{HJinitial} can be represented as \eqref{lax_formula} for $t \leq t_0$.
Moreover, if $\phi(x,t)$ is differentiable w.r.t. $x$ at a neighbourhood of $(x,t)$ and the infrimium is attained by $\tilde{v}$, then we have
$ \partial_x  \varphi(x,t) =\tilde{v} $,
\end{Conjecture}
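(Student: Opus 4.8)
The plan is to obtain the continuum identity \eqref{lax_formula} as the $\delta\to0$ limit of the discrete formula established in the Theorem immediately preceding, and then to read off $\partial_x\varphi=\tilde v$ from a first-order/envelope argument along the limiting characteristics. As a preliminary reduction, since Conjecture \ref{conj1} is phrased only in terms of a convex $C^2$ Hamiltonian, I would first put $H$ into the control form of Section \ref{sec3} by Fenchel duality, $H(x,p,t)=\max_{u\in U}\{-\langle f(T-t,x,u),p\rangle-h(T-t,x,u)\}$, truncating the dual variable to a large ball so that (A1) is restored; convexity/concavity gives the quasi-convexity/quasi-concavity in (A2)--(A3), $H\in C^2$ supplies the smoothness needed, and the coercivity hypotheses (A4), (A5) hold up to the standing growth bounds. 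With an extra temporary assumption that $g$ is $C^2$ (to be removed at the end) the discrete Theorem then applies verbatim, so $\varphi^1_N(x,t)=\inf_{v\in\mathbb R^d}\{\cdots\}$ with the forward-Euler bicharacteristics as constraints.

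Second, I would pass to the limit ingredient by ingredient. \emph{(i) Convergence of the flow.} Because $H\in C^2$, the Hamiltonian vector field $(\partial_pH,-\partial_xH)$ is $C^1$ hence locally Lipschitz, so the continuous bicharacteristic system with terminal data $\gamma(t)=x$, $p(t)=v$ has a unique solution on $[0,t]$ depending $C^1$ on $(x,v)$, and the forward-Euler iterates $(x_n(x,v),p_n(x,v))$ converge to it uniformly on $[0,t]$ with the standard $O(\delta)$ rate, uniformly for $(x,v)$ in compact sets. \emph{(ii) Convergence of the cost.} The Riemann sum $\delta\sum_n\{\langle p_n,\partial_pH(x_n,p_n,s_n)\rangle-H(x_n,p_n,s_n)\}$ then converges to $\int_0^t\{\langle p,\partial_pH\rangle-H\}\,ds$, and $g(x_0(x,v))\to g(\gamma(x,v,0))$ by continuity. \emph{(iii) Stability of the outer minimization.} Here $t\le t_0$ and (A5) enter: for $t$ small the backward flow $v\mapsto\gamma(x,v,0)$ is a $C^1$ diffeomorphism of $\mathbb R^d$ (its Jacobian is the identity at $t=0$, so it is invertible for small $t$ by the implicit function theorem, with no caustic), and coercivity of $g$ together with the growth of $H$ makes the functional coercive in $v$; hence for each $N$ the $\inf$ over $v$ is attained on a fixed compact set independent of $N$, and an equi-coercivity / $\Gamma$-convergence argument gives $\varphi^1_N(x,t)\to\min_{v\in\mathbb R^d}\{\cdots\}$ with the continuous functional of \eqref{lax_formula}. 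Combined with the classical fact that $\varphi^1_N(x,t)\to\varphi(x,t)$, the value function of the control problem, which is the viscosity solution of \eqref{HJeqn}--\eqref{HJinitial} (convergence of the Euler discretization of optimal control), and with removal of the truncation of $U$ by exhaustion and of the smoothness of $g$ by mollification plus stability of viscosity solutions, this yields \eqref{lax_formula} for $t\le t_0$.

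Third, for the gradient identity, assume $\varphi$ is differentiable in $x$ near $(x,t)$ and the minimum is attained at $\tilde v$. Differentiate the functional in \eqref{lax_formula} with respect to the \emph{explicit} appearance of $x$, which enters only through the terminal conditions $\gamma(t)=x$, $p(t)=v$, at the optimal $v=\tilde v$: the first-order optimality condition in $v$ annihilates the implicit contribution (envelope theorem), so $\partial_x\varphi(x,t)$ equals the $x$-partial of the functional with $v$ frozen. A standard integration by parts in $s$ along the Hamilton equations then makes the bulk terms cancel, while the $s=0$ boundary term cancels against the stationarity in $v$ (which forces $p(0)\in\partial g(\gamma(0))$), leaving only the boundary term at $s=t$, namely $\langle p(t),\delta x\rangle=\langle\tilde v,\delta x\rangle$. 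Hence $\partial_x\varphi(x,t)=\tilde v$.

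\textbf{Main obstacle.} The delicate step is (iii): justifying that the $\inf$--$\sup$ minimax interchanges behind the discrete Theorem (ultimately Sion's theorem on the discrete functionals) survive the continuum limit \emph{and} the outer minimization over $v$ without extra convexity in $x$, i.e. that no caustic forms and the relevant $v$-set stays compact. This is exactly what confines us to $t\le t_0$; past $t_0$ one must instead take the minimum over all critical bicharacteristics, and proving the minimax \emph{equality} there (rather than merely one inequality) is precisely the gap left open. Establishing the $\Gamma$-convergence of the discrete-to-continuous functionals uniformly enough to commute with the minimization is the technically heaviest part of the argument.
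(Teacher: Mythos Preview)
The paper does \emph{not} prove Conjecture~\ref{conj1}; it is deliberately left as a conjecture. What the paper actually establishes is the discrete identity for $\varphi^1_N(x,t)$ (the Theorem immediately preceding the conjecture) under the additional hypotheses (A1), (A2), (A4), (A6), (A7), and then \emph{passes to the limit only heuristically}, referring the reader to \cite{proofimportant} for a rigorous treatment under relaxed assumptions. So there is no ``paper's own proof'' to compare against; your proposal is an attempt to supply what the authors explicitly declined to supply.

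That said, your bootstrap from the discrete Theorem has a genuine gap at the very first step. The second part of that Theorem---the one that collapses the inf--sup over $(\textbf{X},\textbf{P})$ to an infimum over the single terminal momentum $v$---requires (A6): $H(x,p,s)$ \emph{proper concave in $x$} and equi-coercive in $p$. Conjecture~\ref{conj1} assumes only $H\in C^2$, convexity in $p$, and (A5) on $g$; nothing there yields concavity of $H$ in $x$, and your sentence ``convexity/concavity gives the quasi-convexity/quasi-concavity in (A2)--(A3)'' conflates hypotheses that are logically independent (indeed (A3) and (A6) are about the $x$-dependence, which the conjecture leaves unrestricted). Without (A6) Lemma~\ref{halolXD} fails: its proof uses concavity in $x$ to guarantee that the inner $\sup$ is attained and that all critical bicharacteristics give the same value, which is exactly what lets one parametrize by $v=p_N$ alone. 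Truncating the Fenchel-dual variable to a ball recovers (A1) but does nothing for (A6).

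Your steps (i)--(ii) are fine once you have the discrete identity in hand, and your envelope argument for $\partial_x\varphi=\tilde v$ is the standard one. But step (iii), as you yourself flag, is where the real content lies: without concavity in $x$ the discrete functional need not have a unique saddle, the map $v\mapsto(\textbf{X}^*,\textbf{P}^*)$ need not be well defined, and $\Gamma$-convergence of the outer infimum is not automatic. The paper's strategy is precisely to \emph{avoid} this difficulty by imposing (A6) at the discrete level and then conjecturing that the final formula remains valid more broadly; closing that gap is the substance of \cite{proofimportant}, not of the present paper.
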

\noindent We refer to it as the \textbf{minimization principle}, or the generalized Lax formula.
This conjecture was validated under less restricted assumption in \cite{proofimportant} after a previous version of our paper in arXiv in \cite{preprint} was launched.  \cite{proofimportant}  is a very important and monumental work which suggest the validity of our conjecture in a more general situation.

\noindent \textbf{Remark 1}: When $H(x,p,t)$ is not differentiable at some given point $p$, then we believe that in formula \eqref{lax_formula}, the Mordukhovich subdifferential, $ \partial_x^{-} H$, as defined in \cite{subdiff,subdiff2,subdiff3}, should be used instead of $ \partial_x H$.  In that case the constraint becomes the inclusion $\dot{ \gamma }(x,v,s) \in \partial_p^- H( \gamma(x,v,s), p(x,v,s),s)$, and infrimum is taken over also all the curves $(\gamma,p)\in C^\infty$ satisfying the inclusion.

In below there are several examples for the conjecture, that we only provide a brief account.
\vskip 5mm

\noindent
\textbf{Example 1}
For Hamiltonian $H(x,p,t)$ which is convex w.r.t. $p$, concave w.r.t. $x$ and satisfying Assumptions (A) in \cite{rockafellar1}, the following minimization principle holds for the viscosity solution to \eqref{HJeqn}-\eqref{HJinitial} (see Theorem 2.3 in \cite{rockafellar1} and Theorem 4.8 in \cite{rockafellar2}):
\beqnx
 \varphi(x,t) 
&=& \inf_{\gamma \in C^\infty, \gamma(t) = x } \{ g(  \gamma(0)  ) + \int_{0}^t L(\gamma(s),\dot{\gamma}(s ),s) ds \} 
\eqnx
where the Lagrangian $L$ is defined as
\beqnx
L(x,q,s) = \sup_{p} \{ \langle p,q \rangle - H(x,p,t) \} \,.
\eqnx
%The second equality comes directly from the fact that $H$ is convex w.r.t. $p$.
This example may not satisfy either the assumptions in Lemma \ref{lolXD} and or Lemma \ref{hahalolXD}, since $U$ may not be compact.

\noindent
\textbf{Example 2}
When $H(x,p,t)$ is a convex homogeneous degree-$1$ functional w.r.t. $p$ of the following special form
\begin{equation*}
H(x,p,t) = c(x) \Phi (p ) \,,
\end{equation*}
where $c \in C^\infty$ with $C_0 \geq c(x) \geq c_0$ for some $C_0, c_0 >0$, and $\Phi$ is homogeneous of degree $1$ functional.  We recall the definition of the Wulff set $W$ \cite{Wulff} of $\Phi$ defined as the set $W$ such that the following equality holds:
\beqnx
\Phi(p) = \max_{x \in W} \langle p,x \rangle =  \mathcal{I}_W^*(p)\,.
\eqnx
with $*$ denoting the Fenchel-Legendre transform.
We furthermore assume closed, strictly convex, balanced (i.e. $-W = W$), absorbing (i.e. for all $y \in \mathbb{R}^d$, $y \in \tau W$ for some $\tau > 0$) with smooth boundary $\partial W \in C^\infty$ \cite{gauge,Hiriart}.
Then it is ready to check that the subdifferential of $\Phi$ is given as follows:
\beqnx
\partial_p^- \Phi(p) = 
\begin{cases}
\partial_p \Phi(p) \in \partial W & \text{ if } p \neq 0 \\
W & \text{ if } p = 0 \,. \\
\end{cases} 
\eqnx
For convenience sake, let us also define, for a closed, strictly convex, balanced, absorbing set $W$ with smooth boundary $\partial W \in C^\infty$, the Minkowski functional of $W$ \cite{gauge,Hiriart}, as follows
\beqnx
\rho_{ W } (y) : = \inf \{ \tau > 0 : y \in \tau W\}  \,.
\eqnx
With this, we are ready to define a metric on $\mathbb{R}^d$ as follows:
\beqnx
\tilde{d}(x,y) := \inf_{ t > 0 }\left\{ t :  \gamma \in C^\infty, \gamma(0) : = x, \gamma(t) = y,   \dot{ \gamma} (s)  \in c( \gamma (s) ) W  \right\}  \,. 
\eqnx
It is ready to check that $\tilde{d}$ defines a metric and thus $(\mathbb{R} , \tilde{d} )$ forms a metric space.  

\begin{Lemma}
\label{metric}
Assume the metric space $(\mathbb{R} , \tilde{d} )$ is complete.
If we define
\beqnx
C(x,t) :=  \bigcup_{v \in \mathbb{R}^d } \left\{ \gamma(x,v,0) : 
\begin{matrix} (\gamma,p ) \in C^\infty  \\ \dot{ \gamma }(x,v,s) = \partial_p^- H( \gamma(x,v,s), p(x,v,s)), \\  \dot{ p }(x,v,s) = - \partial_x H( \gamma(x,v,s), p(x,v,s) ), \\ \gamma(x,v,t) = x, p(x,v,t) = v\end{matrix}  
 \right\} 
\eqnx
and
\beqnx
B(x,t) := \bigcup_{0 \leq r \leq t} \bigcup_{v \in \mathbb{R}^d \backslash \{0\}  } \left\{ \gamma(x,v,0) : 
\begin{matrix} \dot{ \gamma }(x,v,s) = \partial_p H( \gamma(x,v,s), p(x,v,s)), \\  \dot{ p }(x,v,s) = - \partial_x H( \gamma(x,v,s), p(x,v,s) ), \\ \gamma(x,v,r) = x, p(x,v,r) = v\end{matrix}  
 \right\} 
\eqnx
Then for all $(x,t) \in \mathbb{R}^{d+1} \times (0,T)$, we have $B(x,t)$ is well-defined and 
\beqnx
C(x,t) = B(x,t)
\eqnx
\end{Lemma}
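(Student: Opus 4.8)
The plan is to prove the sharper statement that both sets coincide with the closed metric ball
$\bar{B}_{\tilde d}(x,t):=\{y:\tilde d(x,y)\le t\}$, whence $C(x,t)=B(x,t)$. Three structural features of $H(x,p,t)=c(x)\Phi(p)$ drive the argument. First, $\Phi$ is positively homogeneous of degree $1$, so $\partial_p\Phi$ is homogeneous of degree $0$ and the Hamilton system $\dot\gamma=c(\gamma)\partial_p\Phi(p)$, $\dot p=-\nabla c(\gamma)\Phi(p)$ is invariant under $p\mapsto\lambda p$ ($\lambda>0$): the co-state enters only through its direction. Second, for $p\neq0$ one has $\partial_p\Phi(p)\in\partial W$, hence $\rho_W(\partial_p\Phi(p))=1$ and so $\rho_W(\dot\gamma(s))=c(\gamma(s))$ along a Hamilton trajectory; since the admissibility constraint $\dot\gamma\in c(\gamma)W$ is exactly $\rho_W(\dot\gamma)\le c(\gamma)$, this means $\gamma$ is traversed at unit speed for the time-metric $\tilde d$ and a trajectory on $[0,r]$ has $\tilde d$-length exactly $r$, so $\tilde d(\gamma(0),\gamma(r))\le r$. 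Third, $W$ is closed, bounded (because $\Phi=\mathcal I_W^*$ is finite-valued), and since it is convex, balanced and absorbing, $0$ lies in its interior, so the constant curve is an admissible competitor for $\tilde d$ — one may always ``wait''.

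I would first record that $B(x,t)$ is well-defined: by the boundedness of $\partial W$ the spatial speed $|\dot\gamma|$ is uniformly bounded, so $\gamma$ cannot escape a fixed ball in time $r$, and then a Gronwall estimate on $\dot p=-\nabla c(\gamma)\Phi(p)$ (using $\Phi(p)\le C|p|$ and the bound on $\nabla c$ over that ball) yields global existence of the flow. Next, $C(x,t)\subseteq\bar B_{\tilde d}(x,t)$: given $(\gamma,p)$ solving the inclusion system with $\gamma(t)=x$, $p(t)=v$, a Gronwall argument on the $p$-equation gives the dichotomy that either $p\equiv0$ on $[0,t]$ or $p$ never vanishes; in the first case $\dot\gamma\in c(\gamma)W$ throughout so $\tilde d(x,\gamma(0))\le t$ directly, and in the second the inclusion reduces to the ordinary Hamilton system and feature two gives $\tilde d(x,\gamma(0))\le t$. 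For the reverse, given $y$ with $\rho:=\tilde d(x,y)\le t$, completeness of $(\mathbb{R}^d,\tilde d)$ produces an admissible curve of $\tilde d$-length $\rho$ joining $y$ to $x$; prepending a constant segment at $y$ of duration $t-\rho$ and taking $p\equiv0$, $v=0$ exhibits an admissible inclusion trajectory with $\gamma(0)=y$, so $y\in C(x,t)$. Hence $C(x,t)=\bar B_{\tilde d}(x,t)$.

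For $B(x,t)=\bar B_{\tilde d}(x,t)$, the inclusion $\subseteq$ is immediate from feature two (each trajectory on $[0,r]$, $r\le t$, with $p\neq0$ has $\tilde d$-length $r\le t$). For $\supseteq$, given $y$ with $\rho:=\tilde d(x,y)\le t$, I would invoke a Hopf--Rinow-type theorem for the complete length space $(\mathbb{R}^d,\tilde d)$ (equivalently, Arzel\`a--Ascoli applied to a minimizing sequence, using that closed $\tilde d$-balls are compact) to obtain a minimizing geodesic of length $\rho$ from $x$ to $y$; being non-constant it has $\dot\gamma\neq0$ off a null set, and since $\rho_W$ and $\Phi$ are $C^\infty$ away from the origin (strict convexity and smoothness of $\partial W$), the minimizer solves the Finsler Euler--Lagrange equation for the Lagrangian $\rho_W(\dot\gamma)/c(\gamma)$. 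The Legendre transform $p=\partial_{\dot\gamma}\bigl[\rho_W(\dot\gamma)/c(\gamma)\bigr]$, which is nonzero because that Lagrangian is homogeneous of degree $1$, converts this into the Hamilton system for $H=c(x)\Phi(p)$; reparametrizing to the unit-speed clock so the trajectory runs on $[0,\rho]$ with $\gamma(\rho)=x$, one reads off $v=p(\rho)\neq0$ and $r=\rho\le t$, so $y=\gamma(x,v,0)\in B(x,t)$. Combining the two set equalities gives $C(x,t)=B(x,t)$.

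I expect the genuine obstacle to be the last step: constructing a minimizing geodesic of $\tilde d$ and rigorously identifying it with a Hamilton trajectory carrying a nonvanishing co-state. This needs (i) the correct Hopf--Rinow statement for the metric space $(\mathbb{R}^d,\tilde d)$, which is precisely where the assumed completeness is used; (ii) a first-variation and regularity argument showing that a Lipschitz minimizer of the Finsler length (non-smooth only at $0$, where a non-constant minimizer never sits) is $C^2$ and satisfies the geodesic equation; and (iii) the Lagrangian--Hamiltonian duality between this geodesic flow and the flow of $H=c\,\Phi$, together with the verification that the dual momentum is nowhere zero. The homogeneity features and the compactness of $W$ render the remaining ingredients — global existence of the flow, the Gronwall dichotomy, and the ``waiting'' construction — routine.
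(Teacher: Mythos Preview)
Your proposal is essentially correct and follows the same strategy as the paper --- both hinge on the Hopf--Rinow--Cohn-Vossen theorem to produce a minimizing geodesic of $\tilde d$, and on the identification of that geodesic with a Hamilton trajectory of $H=c\,\Phi$ carrying a nowhere-vanishing co-state.  The paper proves $B\subset C$ and $C\subset B$ directly, whereas you insert the intermediate characterisation $C(x,t)=B(x,t)=\overline{B}_{\tilde d}(x,t)$; this is a nice sharpening but does not change the substance of the argument.

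Two small points of divergence are worth noting.  First, for $\overline{B}_{\tilde d}\subseteq C$ you ``prepend a constant segment'' to a minimizing geodesic; this concatenation will generally fail to be $C^\infty$ at the junction.  The paper's device --- which you already have in hand via your feature two --- is to time-rescale the geodesic $\bar\gamma:[0,\rho]\to\mathbb{R}^d$ to $[0,t]$ by $s\mapsto \bar\gamma(s\rho/t)$, so that $\dot\gamma\in(\rho/t)\,c(\gamma)\,W\subset c(\gamma)W$ and smoothness is preserved; then pair with $p\equiv 0$.  Second, your justification that the momentum $p=\partial_{\dot\gamma}[\rho_W(\dot\gamma)/c(\gamma)]$ is nonzero ``because the Lagrangian is homogeneous of degree~$1$'' is not quite the reason: homogeneity makes the Legendre transform degenerate, but what you actually use is that for $\dot\gamma\neq 0$ one has $\partial_{\dot\gamma}\rho_W(\dot\gamma)\in\partial W^{o}$, and $0\notin\partial W^{o}$ since $W$ is absorbing.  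The paper obtains $\bar p\neq 0$ by a different, strict-complementarity argument: dilating the constraint set to $W^\epsilon=(1+\epsilon)W$ strictly decreases $\tilde d$, so the constraint is active and the multiplier is nonzero.  Either route works.
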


\begin{proof}

Before we get to the proof of well-definedness of $B(x,t)$ and the equivalence of $B(x,t)$ and $C(x,t)$, let us first notice that for any curve $(\gamma,p ) \in C^\infty$ satisfying the following inclusion 
\beqn
 \dot{ \gamma }(x,v,s) \in  c(\gamma (x,v,s)  )  \partial_p^- \Phi(p (x,v,s) ) \,, \quad 
\dot{ p }(x,v,s) = - \partial_x c(\gamma (x,v,s)  )   \Phi(p(x,v,s) )
\label{pairpair}
\eqn
We have that
\beqnx
\dot{ \Phi(p (x,v,s)  ) } = - \langle l(x,v,s) , \partial_x c(\gamma (x,v,s)  ) \rangle   \Phi(p(x,v,s) )
\eqnx
where $ l(x,v,s) \in  \partial_p^- \Phi(p (x,v,s) )$, i.e. we get that
$$ \Phi(p (x,v,s)  ) = \Phi(v) \exp\left( \langle l(x,v,s) , \partial_x c(\gamma (x,v,s)  ) \rangle   \right) > 0 \text { for all }  s \quad  \Leftrightarrow   \quad  \Phi(v)  > 0 $$
Fromt the fact that $W$ is absorbing, we conclude that
$$ p (x,v,s)  \neq  0 \text { for all }  s   \quad  \Leftrightarrow  \quad v \neq 0 \,. $$ 
With the above observation, now we would get to the proof of our lemma:
\begin{enumerate}
\item
$B(x,t)$ is well-defined:\\
Since for all $(\gamma,p ) \in C^\infty$ satisfying \eqref{pairpair} with $\gamma(x,v,r) = x, p(x,v,r) = v$, if $v \neq 0$, by the above statement, we have that $ p (x,v,s)  \neq  0$ for all $ 0 \leq s \leq r$, and therefore $\Phi(p (x,v,s) )$ is smooth.  Therefore $ \partial_p \Phi(p (x,v,s) ) $ is well-defined for all $s$.  By uniqueness and existence of ODE system, we have well-definedness of $B(x,t)$ for all $(x,t) \in \mathbb{R}^{d+1} \times (0,T)$.

\item
$ B(x,t) \subset C(x,t) $:\\
For $y \in B(x,t)$, there exists a pair of curves $( \gamma,p ) \in C^\infty$ and $v \neq 0 $, $0 \leq r \leq t$ such that
\beqnx
\begin{matrix} \dot{ \gamma }(x,v,s) = \partial_p H( \gamma(x,v,s), p(x,v,s)), \\  \dot{ p }(x,v,s) = - \partial_x H( \gamma(x,v,s), p(x,v,s) ), \\ \gamma(x,v,r) = x, p(x,v,r) = v\end{matrix} 
\eqnx
and $\gamma(0)  = y$.  Then $\dot{ \gamma }(x,v,s) = \partial_p H( \gamma(x,v,s), p(x,v,s)) \in \partial W \in W$  Now let us define $\tilde{\gamma}$ as a rescaling by $\tilde{\gamma} := \gamma (x,v,s r/t)$ where $r/t \leq 1$, then we notice that $\tilde{\gamma} \in C^\infty$, $\dot{\tilde{\gamma}} \in r/t \partial W \in W$ and $\tilde{\gamma}(t) = x$, $\tilde{\gamma}(0) = y$.  Now choose $\tilde{p}(s) = 0$ for all $0 \leq s \leq t$.  Thus we have $(\tilde{\gamma},\tilde{p}) \in C^\infty$ satisfying the differential inclusion defining $C(x,t)$ and $\tilde{\gamma}(t) = x, \tilde{p}(t) = 0$ initial value. Thus $y =  \tilde{\gamma}(x,v,t)  \in C(x,t)$.

\item
$C(x,t)  \subset   B(x,t)$:\\
For all $y \in C(x,t) $, there exists $ (\gamma,p ) \in C^\infty $ such that
\beqnx
 \dot{ \gamma }(x,v,s) = \partial_p^- H( \gamma(x,v,s), p(x,v,s))  \,, \quad \,   \dot{ p }(x,v,s) = - \partial_x H( \gamma(x,v,s), p(x,v,s) )
\eqnx
and
\beqnx
\gamma(x,v,0) = y, \gamma(x,v,t) = x, p(x,v,t) = v \,.
\eqnx
Since the metric space $(\mathbb{R} , \tilde{d} )$ is complete (and locally compact), by the Hopf-Rinow-Cohn-Vossen theorem \cite{cohnvossen,hopfrinow}, there exists a (globally) length minimizing geodesic $ \bar{\gamma} \in C^\infty$ such that $ \bar{\gamma}(0) = y $, $\bar{\gamma} (r) = x$, and $\tilde{d}(x,y) = r$ for some $r$.
Notice that 
\beqnx
 r = \tilde{d}(x,y) \leq t \,. 
\eqnx
by definition since $\gamma (x,v,\cdot) $ satisfies the conditions $ \gamma \in C^\infty, \gamma(0) : = x, \gamma(t) = y,   \dot{ \gamma} (s)  \in c( \gamma (s) ) W$. 
Consider the vector $ l(s) := \dot { \bar \gamma } (s) / c(\bar \gamma (s)) $.
Now rewrite
$$ \tilde{d}(x,y) = \inf_{\gamma \in  \gamma \in C^\infty, t >0, \gamma(0) : = x, \gamma(t) = y }  \int_0^t \left( 1 +  \mathcal{I}_{ W } \left( \frac{ \dot{ \gamma} (s)}{c( \gamma (s) ) }  \right) \right) ds 
 \,. $$
From the fact that for any given $\gamma \in C^\infty$, $$\frac{ \dot{ \gamma }(s) + \dot{h}(s) }{ c(\gamma(s)  + h(s))} = \frac{ \dot{ \gamma }(s) }{ c(\gamma (s)  )} +  \frac{  \dot{h}(s)  }{ c(\gamma (s)  )} -  \frac{ \dot{ \gamma }(s)  \langle  \nabla_x c(\gamma(s)) ,  h(s) \rangle  }{ [ c(\gamma(s) )]^2 }+ O(  | h(s) |^2 + | \dot{h}(s) |^2)  $$ for all $h \in C_c^\infty$, we obtain directly from variational calculus  that the following optimality condition that $ \bar{\gamma}$ necessarily satisfies, for all perturbation $h \in C_c^\infty$:
$$ \int_0^r \left \langle h(s) , \big( \dot{\bar p}(s) +   \nabla_x c( \bar  \gamma(s)) \left \langle \bar p(s) ,  l(s)  \right \rangle   \big) \right \rangle  ds = 0
\quad  \text{ for some } \quad  \bar  p(s) \in \partial^- [ \mathcal{I}_{ W }] \left(  l(s) \right) \,. $$
By the definition of Legendre transformation, the above condition is equivalent to the existence of $\bar p $ such that
\beqn
 \dot{ \bar  p}(s) 
 = -   \nabla_x c( \bar  \gamma(s)) \Phi ( \bar p(s)  )   \,, \quad  l(s) \in   \partial_p^- \Phi(   \bar  p(s) )  
\label{complstrict}
\eqn
Now we would like to show that we may furthermore choose the dual vector $\bar{p}(s) \neq 0$ for some $s$, (i.e. the strict complementary condition is satisfied and $\bar{p} \neq 0$ as a function.)
In order to do so, we would like to argue that if we enlarge our constraint set from $W$ to $W^\epsilon := (1+\epsilon) W$ for any $\epsilon >0$, we still have a global optimizer but the optimizer differs from the original case when $\epsilon =0$. In fact, for all $\epsilon >0$, consider the problem
\beqnx
\tilde{d}^\epsilon (x,y) := \inf_{ t > 0 }\left\{ t :  \gamma \in C^\infty, \gamma(0) : = x, \gamma(t) = y,   \dot{ \gamma} (s)  \in c( \gamma (s) ) W^\epsilon  \right\}  \,. 
\eqnx
then by reparametrization of each curve $\gamma \in C^\infty$ by $ a:  s \mapsto s/ (1+\epsilon)$, we get that
\beqnx
\tilde{d}^\epsilon (x,y) = (1+\epsilon) ^{-1} \tilde{d} (x,y) 
\eqnx
and the new metric space $(\mathbb{R}^d , \tilde{d}^\epsilon )$ is also complete, and by Hopf-Rinow-Cohn-Vossen theorem \cite{cohnvossen,hopfrinow}, we have a unique minimizing geodesic $\bar{\gamma}^\epsilon$.   Since the new optimzation problem to obtain $\tilde{d}^\epsilon (x,y) $ comes from a rescaling of the original problem $\tilde{d}(x,y)$, we have $\bar{\gamma}^\epsilon = \bar{\gamma} \circ a $.  Therefore the optimizer $\bar{\gamma}^\epsilon \neq \bar{\gamma}$ unless $\epsilon = 0$.  Hence there is a dual vector  $\bar p$ satisfying \eqref{complstrict} that satisfies a strict complementary condition, i.e. $\bar{p}(s) \neq 0$ for some $s$.
Now since $ ( \bar \gamma, \bar p ) $ satisfies \eqref{pairpair} and $\bar p(s) \neq 0$ for some $s$, one concludes that $\bar p(s) \neq 0$ for all $s$, and therefore  $  \partial_p^- \Phi(   \bar  p(s) )  =   \partial_p \Phi(   \bar  p(s) ) $.  Hence there exists $ ( \bar \gamma, \bar p )  \in C^\infty$ that satisfies the bi-characteristic equation and $\bar \gamma(0) = y$, $\bar \gamma(r) = x$ and $0 \leq r \leq t$.  Hence $y \in B(x,t)$

\end{enumerate}

\end{proof}

%Then, together with the fact that realizing the following set (a.k.a. domain of dependence) 
%\beqnx
%C(x,t) : =  \bigcup \left\{  \gamma(0)  : 
%\begin{matrix} \dot{ \gamma }(s) \in c(x)W \\ \gamma(t) = x , \end{matrix}  
% \right\} 
%\eqnx
%has its boundary exactly coinciding with
%\beqnx
%\partial [C(x,t)] =  \bigcup_{v \in \mathbb{R}^d \backslash \{0\}  } \left\{ \gamma(x,v,0) : 
%\begin{matrix} \dot{ \gamma }(x,v,s) = \partial_p H( \gamma(x,v,s), p(v,s)), \\  \dot{ p }(x,v,s) = - \partial_x H( \gamma(x,v,s), p(x,v,s)), \\ %\gamma(x,v,t) = x, p(x,v,t) = v\end{matrix}  
% \right\} \,.
%\eqnx

\begin{Corollary}
Assume Conjecture \ref{conj1}, ie. \eqref{lax_formula}, and Remark 1 are true. 
Assume the metric space $(\mathbb{R} , \tilde{d} )$ is complete.  Then
\beqn
 \varphi(x,t) 
&=&\inf_{v \in \mathbb{R}^d \backslash \{0\}  } \min_{0 \leq r \leq t} \left\{ g( \gamma(x,v, r))  : 
\begin{matrix} \dot{ \gamma }(x,v,s) = - c( \gamma(x,v,s) ) \partial_p \Phi ( p(x,v,s) ) , \\  \dot{ p }(x,v,s) = \partial_x  c(\gamma(x,v,s)) \Phi(p(x,v,s)) , \\\gamma(x,v,0) = x, p(x,v,0) = v 
\end{matrix} 
\right\} 
\label{hugyen_1}
\eqn
\end{Corollary}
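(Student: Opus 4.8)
The plan is to run the Lax formula \eqref{lax_formula} (in the non-smooth, differential-inclusion form of Remark 1) for the homogeneous Hamiltonian $H(x,p,t)=c(x)\Phi(p)$ of Example 2, observe that the running cost vanishes identically, and then convert the resulting geometric infimum by means of Lemma \ref{metric} together with a time reversal.

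First I would apply Conjecture \ref{conj1} and Remark 1 with $H(x,p)=c(x)\Phi(p)$. Since $\partial W\in C^\infty$ is strictly convex, $\Phi$ is differentiable off $p=0$, and the only non-smoothness is at $p=0$, where Remark 1 dictates the velocity constraint $\dot\gamma(x,v,s)\in\partial_p^-H(\gamma(x,v,s),p(x,v,s))=c(\gamma(x,v,s))\,\partial_p\Phi(p(x,v,s))$ and requires the running-cost integrand to be read as $\langle p(x,v,s),\dot\gamma(x,v,s)\rangle-H(\gamma(x,v,s),p(x,v,s))$. The key elementary fact is Euler's identity for convex, positively homogeneous degree-$1$ functions: $\langle p,\xi\rangle=\Phi(p)$ for every $\xi\in\partial\Phi(p)$. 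Writing $\dot\gamma=c(\gamma)\xi$ with $\xi\in\partial\Phi(p)$ gives $\langle p,\dot\gamma\rangle=c(\gamma)\langle p,\xi\rangle=c(\gamma)\Phi(p)=H(\gamma,p)$, so the integrand is identically zero along every admissible curve (for the $v=0$, $p\equiv 0$ curves this is immediate, both terms vanishing). Hence \eqref{lax_formula} collapses to $\varphi(x,t)=\inf\{g(\gamma(x,v,0)):v\in\mathbb{R}^d,\ (\gamma,p)\in C^\infty\text{ solves }\dot\gamma\in\partial_p^-H,\ \dot p=-\partial_xH,\ \gamma(x,v,t)=x,\ p(x,v,t)=v\}=\inf_{y\in C(x,t)}g(y)$, with $C(x,t)$ exactly the set of Lemma \ref{metric}.

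Next I would invoke Lemma \ref{metric} --- this is where completeness of $(\mathbb{R}^d,\tilde d)$ is used --- to replace $C(x,t)$ by $B(x,t)$, giving $\varphi(x,t)=\inf_{y\in B(x,t)}g(y)=\inf_{v\in\mathbb{R}^d\backslash\{0\}}\inf_{0\le r\le t}\{g(\gamma(x,v,0)):\dot\gamma=\partial_pH(\gamma,p),\ \dot p=-\partial_xH(\gamma,p),\ \gamma(r)=x,\ p(r)=v\}$. Finally I would undo the backward parametrization. For $v\neq 0$ the homogeneity computation from the proof of Lemma \ref{metric} gives $p(\cdot)\neq 0$ along the curve, so $\Phi$ is smooth there and the flow is the classical Hamiltonian flow of $c(x)\Phi(p)$; putting $\hat\gamma(\sigma):=\gamma(r-\sigma)$, $\hat p(\sigma):=p(r-\sigma)$ one checks directly that $\dot{\hat\gamma}=-c(\hat\gamma)\partial_p\Phi(\hat p)$, $\dot{\hat p}=\partial_xc(\hat\gamma)\Phi(\hat p)$, $\hat\gamma(0)=x$, $\hat p(0)=v$, and $\hat\gamma(r)=\gamma(0)$, so $g(\gamma(x,v,0))=g(\hat\gamma(r))$. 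This reversal is a bijection between the two families of curves, and since for fixed $v$ the curve $\hat\gamma$ does not depend on $r$ while $r\mapsto g(\hat\gamma(r))$ is continuous on the compact interval $[0,t]$, the inner infimum over $r$ is attained and is a minimum. Relabelling $\hat\gamma$ as $\gamma(x,v,\cdot)$ produces \eqref{hugyen_1}.

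I expect the main obstacle to be the first step: making the vanishing of the running cost rigorous requires pinning down the precise non-smooth reading of \eqref{lax_formula} coming from Remark 1 --- in particular that the integrand is evaluated with the same element of $\partial_p^-H$ that realizes the velocity constraint --- and verifying that the curves admissible in \eqref{lax_formula} under that reading are exactly those defining $C(x,t)$ in Lemma \ref{metric}, including the $v=0$ case where the inclusion is genuinely set-valued. After that, the argument is bookkeeping: citing the already-proved Lemma \ref{metric} and carrying out the time-reversal reparametrization while keeping track of signs and of which endpoint carries the datum $x$ and which carries $v$.
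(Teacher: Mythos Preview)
Your proposal is correct and follows essentially the same route as the paper's proof, which reads in full: ``Apply Lemma \ref{metric} and a change of variable from $v$ to $-v$ and from $s$ to $t-s$.'' You have simply made explicit the two steps the paper leaves implicit---the vanishing of the running cost via Euler's identity for positively $1$-homogeneous $\Phi$, and the bookkeeping of the time-reversal reparametrization---before invoking Lemma \ref{metric} exactly as the paper does.
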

\begin{proof}
Apply Lemma \ref{metric} and a change of variable from $v$ to $-v$ and from $s$ to $t-s$.
\end{proof}
\noindent We wish to remind that if $ \Phi ( p ) = |p|_2$, we have $ \partial_p \Phi ( p(v,s) ) = p(v,s) / |p(v,s)|_2 $. 
However, notice this example satisfies the assumption in Lemma \ref{lolXD}, but does not satisfies the assumptions in Lemma \ref{hahalolXD}. 
In fact the formula \eqref{hugyen_1} is the Hugyens principle in disguise. 

\subsection{Generalized Hopf maximization with Hamiltonian $H$ that are possibly neither convex nor concave, but the initial data $g$ is convex}  \label{noncon/con_ham}

When the Hamiltonian $H$ is are neither convex nor concave, but the initial data $g$ is convex, the aforementioned conjectured minimization principle does not seem to hold any longer.  In light of the fact that in some special cases a Hopf formula holds, we conjecture that a generalized Hopf-type maximization principle shall hold for a wide class of problem.

We now describe how we get to a Hopf formula under restricted assumption by Lemma \ref{hahalolXD} and Lemma \ref{hahalolXDsayonara}:
\beqnx
 \varphi(x,t) := 
U(x,T-t) & := & \inf_{u \in \mathcal{U}(T-t)} \left \{ \int_{T-t}^T h (s, x(s), u(s)) \, ds + g(x(T))  \right\} \,.
% &=& \inf_{u \in \mathcal{U}(T-t)} \left \{ \int_{0}^t h (T-s, x(T-s), u(T-s)) \, ds + g(x(T))  \right\}   \,,
\eqnx
We again derive our formula formally as follows.
Again, following \cite{boyd}, we shall first consider the following discretization (approximation) for a given $\delta$ such that $\delta N = t$, by denoting $s_n = \delta n $ and $x_N = x$
$$ \varphi(x,t) \approx \varphi^2_N(x,t) $$
where
\beqnx
& & \varphi^2_N(x,t) \\
&:= & \min_{ \{ u_n\}_{n=1}^{N} \in U} \left \{ g(x_0)  + \delta \sum_{n=1}^{N} h (T- s_n, x_n, u_n )  : x_{n+1} - x_n = - \delta f (T- s_{n+1}, x_{n+1}, u_{n+1} ) \text{ for }
n=0,\ldots,N-1 \,, x_N = x  \right\} \,, \\
&=& \inf_{ \{ x_n\}_{n=0}^{N-1} } \sup_{ \{ p_n\}_{n=1}^{N} } F_2(\textbf{X},\textbf{U},\textbf{P}) \,.
\eqnx
where $F_2$ is defined as in Lemma \ref{hahalolXD} and the second equality comes from reformulating the problem with Lagrange multiplier.

Now again if we apply Lemma \ref{hahalolXD} and  Lemma \ref{hahalolXDsayonara}, we get the following:

%If (A1), (A2), (A3), (A4) and (A5) are satisfied
%we get from
%Lemma \ref{hahalolXD} that \eqref{wawawawa} holds and thus
%\beqnx
%\varphi^2_N(x,t)  =   \sup_{ \{ p_n\}_{n=1}^{N} }    \inf_{ \{ x_n\}_{n=1}^{N-1} } \tilde{\tilde{F_2}}(\tilde{\textbf{X}},\textbf{P})  \,.
%\eqnx
%If furthermore that (A6) and (A7) are satisfied, we again have from Lemma \ref{hahalolXDsayonara} that the following holds.

\begin{Theorem} 
If (A1), (A2), (A3), (A4) and (A5) are satisfied, then we have
then we have
\begin{eqnarray}
\varphi^2_N(x,t) 
&=& \sup_{ \{ p_n\}_{n=1}^{N} }    \inf_{ \{ x_n\}_{n=1}^{N-1} } \tilde{\tilde{F_2}}(\tilde{\textbf{X}},\textbf{P})   \,. 
\end{eqnarray}
If (A1), (A2), (A4),(A5), (A6), (A7) are satisfied, then we have
\beqnx
\varphi^2_N(x,t) 
&=& \sup_{ v \in \mathbb{R}^d }    \bigg \{  \langle p_N , x \rangle  -g^*(p_1) - \delta \sum_{n=1}^{N} H(x_{n}, p_n, s_{n})  + \delta \sum_{n=1}^{N-1}   \langle \frac{p_{n} - p_{n+1}}{\delta} , x_{n}  \rangle   :  \notag \\
& & \qquad \qquad
 \begin{matrix} x_{n}(x,v) - x_{n-1}(x,v) =  \delta \partial_p H(x_{n}(x,v), p_n(x,v), t_{n}), \\ p_{n}(x,v) - p_{n+1}(x,v) =  \delta \partial_x H(x_{n}(x,v), p_{n}(x,v), t_{n}), \\ x_N= x,  p_N = v \end{matrix}  \bigg\}  
\eqnx
\end{Theorem}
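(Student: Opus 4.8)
The plan is to deduce both identities directly from Lemma \ref{hahalolXD} and Lemma \ref{hahalolXDsayonara}, so that the only genuinely new step is the passage from the definition of $\varphi^2_N(x,t)$ as a constrained discrete control problem to the unconstrained min--inf--sup of $F_2$. First I would recall
\[
\varphi^2_N(x,t) = \min_{ \textbf{U} }\Big\{ g(x_0) + \delta\sum_{n} h(T- s_{n+1},x_{n+1},u_{n+1}) \; : \; x_{n+1}-x_n = -\delta f(T- s_{n+1},x_{n+1},u_{n+1}),\ x_N=x \Big\},
\]
and observe that, for a fixed control vector $\textbf{U}$ and a fixed $\textbf{X}=\{x_n\}_{n=0}^{N-1}$ (with $x_N$ held equal to $x$, hence not an optimization variable), the map $\textbf{P}\mapsto F_2(\textbf{X},\textbf{U},\textbf{P})$ is affine: each $p_{n+1}$ is multiplied exactly by the residual $x_{n+1}-x_n+\delta f(T- s_{n+1},x_{n+1},u_{n+1})$. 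Consequently $\sup_{\textbf{P}} F_2(\textbf{X},\textbf{U},\textbf{P})$ equals $g(x_0)+\delta\sum_n h$ when every discrete-dynamics equation and the terminal condition hold, and $+\infty$ otherwise; taking $\inf_{\textbf{X}}$ and then $\min_{\textbf{U}}$ reproduces the constrained problem, so that $\varphi^2_N(x,t) = \min_{\textbf{U}}\inf_{\textbf{X}}\sup_{\textbf{P}} F_2(\textbf{X},\textbf{U},\textbf{P})$. I would stress here that the implicitness of the discrete dynamics in $x_{n+1}$ is harmless, since affineness in the multiplier $p_{n+1}$ is all that the $\sup$ argument uses.

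Having identified $\varphi^2_N(x,t)$ with the left-hand side of \eqref{wawawawa}, the first asserted identity is then exactly the conclusion of Lemma \ref{hahalolXD}, whose hypotheses are precisely (A1), (A2), (A3), (A4), (A5). For the second identity I would note that (A6) supplies both the concavity in $x$ and the coercivity needed to upgrade to the hypotheses of (A3), while (A7) supplies the required $C^2$ regularity; hence (A1), (A2), (A4), (A5), (A6), (A7) imply the hypotheses of Lemma \ref{hahalolXDsayonara}, and its conclusion \eqref{hahalolXDform2} is precisely the displayed supremum over $v\in\mathbb{R}^d$, after one rewrites $\langle p_n-p_{n+1},x_n\rangle = \delta\langle (p_n-p_{n+1})/\delta,\,x_n\rangle$. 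The surjection $v\mapsto x_0$ furnished by the correspondence $p_N=v$, established inside the proof of Lemma \ref{hahalolXDsayonara}, is what collapses the remaining inner $\inf$/$\sup$ to a single optimization variable $v$.

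The substance of the theorem therefore resides in the two lemmas already proved; at the level of the theorem the only point demanding real care is the exact-penalty reformulation above, together with the routine but error-prone bookkeeping in matching index ranges ($n=0,\dots,N-1$ in $F_2$ versus $n=1,\dots,N$ in $\tilde{\tilde{F_2}}$) through the summation-by-parts already carried out in Lemma \ref{hahalolXD}, and in checking that the discrete Hamiltonian system written in the statement coincides term-for-term with the critical-point (KKT) equations appearing in Lemma \ref{hahalolXDsayonara}. I expect this index/bookkeeping verification, rather than any conceptual difficulty, to be the main obstacle.
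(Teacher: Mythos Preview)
Your proposal is correct and follows essentially the same approach as the paper: rewrite $\varphi^2_N(x,t)$ as $\min_{\textbf{U}}\inf_{\textbf{X}}\sup_{\textbf{P}} F_2$ via the exact-penalty/Lagrange multiplier argument (which the paper states in one line as ``reformulating the problem with Lagrange multiplier''), then invoke Lemma~\ref{hahalolXD} for the first identity and Lemma~\ref{hahalolXDsayonara} for the second, using that (A6) and (A7) imply (A3). Your write-up is in fact more explicit than the paper's own one-line justification of the Lagrange step.
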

\noindent Again, nonetheless, we notice that the resulting formula that we conjectured seems to be correct beyond these assumptions, as the numerical results show (again especially when we take the maximum over all the paths satisfying the KKT conditions). We hope to get some rigorous criteria for these formula to hold in the future.

In order to state our conjecture, in view of \cite{rublev}, let us define the following before we proceed.

\begin{Definition}
Given a set $S \in \mathbb{R}^d$. A function $G : \text{co}(S) \rightarrow \mathbb{R}$ is pseudoconvex on $S$, where $\text{co}(S)$ is the convex hul of $S$, if for all $\{ s_i\}_{i=1}^{d+1} \in S $ and $ \{ \alpha_i \}_{i=1}^{d+1}$ such that $\alpha_i \geq 0$ and $\sum_{i=1}^{d+1} \alpha_i = 1$, we have
\beqnx
G \left( \sum_{i=1}^{d+1} \alpha_i s_i \right) \leq \sum_{i=1}^{d+1} \alpha_i G(s_i) \,.
\eqnx
\end{Definition}

Now, passing $N$ to the limit, we arrive at the following conjectured Hopf formula:

\begin{Conjecture}
\label{conj2}
Assume $H(x,p,t) \in C^2$ and $g(p) \in C^2$ is convex w.r.t. $p$ that satisfies (A5).  Consider the set
\beqnx
& & S_0(x,t) \\
&:=& \text{argmax}_{v \in \mathbb{R}^d } \bigg\{  \langle x,v \rangle  - g^*( p(x,v,0)) - \int_{0}^t  \bigg\{ H(\gamma(x,v,s), p(x,v,s), s) -  \langle \partial_x H( \gamma(x,v,s), p(x,v,s),s),  \gamma(x,v,s) \rangle  \bigg\} ds  :  \notag\\
& & \qquad \qquad
 \begin{matrix} \dot{ \gamma }(x,v,s) =  \partial_p H( \gamma(x,v,s), p(x,v,s),s), \\  \dot{ p }(x,v,s) = - \partial_x H( \gamma(x,v,s), p(x,v,s), s), \\ \gamma(x,v,t) = x, \,  p(x,v,t) = v\end{matrix} 
 \bigg\} \,.
\eqnx
Assume further that for all $(x,t)$, we have that the function  
\beqnx
G: co \left(  S_0(x,t) \right) & \rightarrow & \mathbb{R} \\
v &\mapsto& H(x, v , s) -  \langle \partial_x H( x, v, t ),  x \rangle 
\eqnx
is pseudoconvex on $S_0(x,t)$. Then there exists $t_0$ such that the viscosity solution to \eqref{HJeqn}-\eqref{HJinitial} can be represented as
\eqref{hopf_formula} 
for $t < t_0$. 
Moreover, if $\phi(x,t)$ is differentiable w.r.t. $x$ at a neighbourhood of $(x,t)$ and the infrimium is attained by $\tilde{v}$, then we have
$ \partial_x  \varphi(x,t) =\tilde{v} $,
\end{Conjecture}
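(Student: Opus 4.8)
The plan is to lift the discrete identity of the Theorem just stated to the continuum limit and then to replace the concavity hypotheses (A3), (A4), (A6) — which were used only to invoke Sion's minimax theorem and coercivity — by the postulated pseudoconvexity of $G$ on $S_0(x,t)$. First I would fix the discretization $\delta N = t$ and, under the restricted assumptions (A1), (A2), (A4), (A5), (A6), (A7), start from $\varphi^2_N(x,t) = \sup_{v\in\mathbb{R}^d}\{\Psi_N(x,v,t)\}$, where $\Psi_N$ is the discrete Hopf functional constrained to the forward-Euler bicharacteristics. Two limits must then be controlled. The first, $\varphi^2_N(x,t)\to\varphi(x,t)$, the viscosity solution of \eqref{HJeqn}--\eqref{HJinitial}, is the convergence of the discretized Bolza problem and follows from consistency of the forward-Euler/rectangle-rule discretization together with the Lipschitz and growth bounds on $f$, $h$, $g$, in the spirit of the stability analysis of semi-Lagrangian schemes. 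The second, $\Psi_N(x,v,\cdot)\to\Psi(x,v,\cdot)$ uniformly on compact sets of $v$, uses $H\in C^2$ and a Gr\"onwall estimate to bound the discrete and continuous bicharacteristics $(x_n,p_n)$, $(\gamma,p)$ uniformly, while the coercivity of $g$ in (A5) confines the maximizing $v$ to a fixed compact set so that $\sup_v$ and $\lim_N$ commute. Combining the two gives \eqref{hopf_formula} under (A1)--(A7) for $t<t_0$, with $t_0$ the largest horizon on which the bicharacteristic flow stays bounded for every competing $v$.

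The substantive step is to drop the concavity of $H$ in $x$. The Lagrangian reformulation $\varphi^2_N(x,t)=\inf_{\{x_n\}}\sup_{\{p_n\}}\tilde{F_2}$, and after the explicit minimization in $x_0$ producing $g^*$, $\varphi^2_N(x,t)=\inf_{\{x_n\}_{n=1}^{N-1}}\sup_{\{p_n\}_{n=1}^{N}}\tilde{\tilde{F_2}}$, needs only that the dynamics constraint is affine and $g$ is convex; no sign condition on $H$ is involved. Weak duality then gives $\varphi^2_N(x,t)\ge\sup_{\{p_n\}}\inf_{\{x_n\}}\tilde{\tilde{F_2}}$, and the critical-point computation in the proof of Lemma~\ref{hahalolXDsayonara}, which for identifying this $\sup\inf$ with the constrained $\sup_v$ relies essentially on smoothness (A7), shows the right-hand side equals $\sup_v\{\Psi_N(x,v,t)\}$. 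Passing to the limit as above yields, \emph{with no convexity of $H$}, the one-sided bound $\sup_v\{\Psi(x,v,t)\}\le\varphi(x,t)$ (this bound can also be seen intrinsically, since each fixed-$v$ branch solves \eqref{HJeqn} by construction and suprema of subsolutions are subsolutions). For the reverse inequality the duality gap must close, and this is exactly where pseudoconvexity of $G$ on $S_0(x,t)$ is used: given maximizers $v_1,\dots,v_{d+1}\in S_0(x,t)$, Carath\'eodory's theorem writes any point of $\text{co}(S_0(x,t))$ as $\sum_i\alpha_i v_i$, and $G(\sum_i\alpha_i v_i)\le\sum_i\alpha_i G(v_i)$ forces a convex combination to do at least as well in the outer maximization; this is precisely what is needed to upgrade the outer problem to a genuine convex--concave saddle problem over $\text{co}(S_0(x,t))$, on which Sion's theorem \cite{sion1,sion2} restores $\inf\sup=\sup\inf$ and hence equality in \eqref{hopf_formula}. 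The identity $\partial_x\varphi(x,t)=\tilde v$ at points of differentiability then follows from a Danskin/envelope differentiation of $\sup_v\{\Psi(x,v,t)\}$, the only surviving term being $\partial_x\langle x,v\rangle=v$ evaluated at the maximizer. An alternative, pursued in \cite{proofimportant}, is to represent $\varphi$ as a minimax solution, verify the dynamic-programming semigroup property so that it coincides with the viscosity solution, and identify its value with the right-hand side of \eqref{hopf_formula}.

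The main obstacle is precisely this convexification. Pseudoconvexity is assumed only on $S_0(x,t)$ and its convex hull, that is, only at the optimum, whereas a clean minimax argument wants a convex--concave structure on a neighbourhood of the saddle; bridging ``information at the argmax'' to ``global strong duality'' is the crux. One natural route is to localise the inner $\inf$--$\sup$ to a sublevel set meeting $\text{co}(S_0(x,t))$ and argue that $\sup_v$ is unaffected by this restriction, but controlling that sublevel set for a nonconvex $H$ is the gap that keeps the statement a conjecture. A secondary, more technical, difficulty is the interplay with $N\to\infty$: the finite-$N$ argmax sets must converge to $S_0(x,t)$ (in the Kuratowski sense) and pseudoconvexity must be stable under this limit, so that the convexification performed at level $N$ survives the passage to the continuum; and one still has to guarantee global-in-time existence of the bicharacteristic system on $[0,t]$ for every competitor $v$, which is what genuinely fixes $t_0$ and requires a priori bounds on $(\gamma,p)$ beyond $C^2$ regularity of $H$.
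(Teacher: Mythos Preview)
The statement in question is a \emph{conjecture}, and the paper does not give a proof of it. Immediately after stating Conjecture~\ref{conj2}, the authors write that ``a proof that our formula holds under less restricted assumption than shown above is also done in \cite{proofimportant}'' and, in Remark~3, that ``a rigorous approach toward this formula might be following an approach of \cite{rublev} to show the postulated formula is a minimax viscosity solution.'' The only rigorous results in the paper are Lemmas~\ref{hahalolXD} and~\ref{hahalolXDsayonara} and the corresponding Theorem, all of which require (A3) or (A6), i.e.\ concavity of $H$ in $x$. The pseudoconvexity hypothesis in the conjecture is a \emph{postulated} weakening, motivated by the analogous condition in \cite{rublev} for the state-independent Hopf formula, and the authors explicitly leave open whether it suffices.

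Your proposal is therefore not in competition with a proof in the paper, but is rather an attempt to prove what the authors left as a conjecture. Your outline is coherent and you have, to your credit, correctly located the decisive gap yourself: pseudoconvexity is assumed only on $S_0(x,t)$ and its convex hull, whereas Sion's theorem needs a convex--concave structure on a full product of convex sets, not merely at the optimizers. Your suggested fix --- localising to a sublevel set meeting $\text{co}(S_0(x,t))$ --- does not close this, because you have no control on the behaviour of $\tilde{\tilde{F_2}}$ in $\textbf{X}$ away from the optimum once (A3)/(A6) are dropped; the inner $\inf_{\{x_n\}}$ could land far from the bicharacteristic, and then the identification with the constrained $\sup_v$ via the KKT argument of Lemma~\ref{hahalolXDsayonara} breaks. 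The secondary issues you raise (Kuratowski convergence of the finite-$N$ argmax sets, stability of pseudoconvexity under $N\to\infty$, global existence of bicharacteristics fixing $t_0$) are real and also unaddressed in the paper. In short: your sketch is a fair roadmap of what a proof would have to accomplish, and your self-identified obstacles are precisely why the authors state this as a conjecture rather than a theorem.
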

\noindent We may refer to it as the \textbf{maximization principle}, or the generalized Hopf formula.
One point to remark is that the conjectured Hopf formula is a generalization of the well-known Hopf formula in \cite{Evans,Hopf_forumula,rublev}.
A proof that our formula holds under less restricted assumption that shown above is also done in \cite{proofimportant}, after a previous version of our paper \cite{preprint} was arXiv-ed.  A proof that our formula holds under less restricted assumption that shown above is also done in \cite{proofimportant}, after a previous version of our paper \cite{preprint} was arXiv-ed.  We suspect the weakest assumption of our formula to hold is the above assumption of psuedoconvexity, similar to one stated in \cite{rublev}.

\noindent
\textbf{Remark 2}: When $H(x,p,t)$ is not differentiable at some given point $p$, then we believe again that in formula \eqref{hopf_formula}, the Mordukhovich subdifferential, $ \partial_x^{-} H$, as defined in \cite{subdiff,subdiff2,subdiff3}, should be used instead of $ \partial_x H$.  In that case the constraint becomes the inclusion $\dot{ \gamma }(x,v,s) \in \partial_p^- H( \gamma(x,v,s), p(x,v,s),s)$, and infrimum is taken over also all the curves $(\gamma, p )\in C^\infty $ satisfying the inclusion before supremum over $v$ is taken.

\noindent 
\textbf{Remark 3}: we expect a candidate of less restricted assumption than \cite{proofimportant} as the above convexity assumption in the variable $p$ in view of the theorem in \cite{rublev}.  A rigorous approach toward this this formula might be following an approach of \cite{rublev} to show the postulated formula is a minimax viscosity solution following the notations in \cite{rublev} and references therein.

Below we present several examples where this conjecture is valid.  We notice that \textbf{none} of the followings satisfies the assumptions of Lemma \ref{hahalolXD} and Lemma \ref{hahalolXDsayonara} given in the proof of our formula, since by Lemma \ref{lolXD2}, the Hamiltonian $H(x,p,t)$ given as in \eqref{hahahahaha} under the our assumptions are automatically convex. 
\vskip 5mm

\noindent \textbf{Example 1}   When $H(x,p,t) = H(p,t)$, we have $\dot{ p }(v,s) =  \partial_x H( \gamma(v,s), p(v,s), s) = 0$.  Therefore assuming Conjecture \ref{conj2}, i.e. \eqref{hopf_formula}, and Remark 2, the conjestured formula gives
\beqnx
 \varphi(x,t) 
&=&- \inf_{v \in \mathbb{R}^d } \bigg\{ g^*( p(x,v,0)) + \int_{0}^t  \bigg\{ H( p(x,v,s), s) -  \langle 0,  \gamma(x,v,s) \rangle  \bigg\} ds  - \langle x,v \rangle  :  
 \begin{matrix} (\gamma,p) \in C^\infty \\\dot{ \gamma }(x,v,s) \in  \partial_p^- H(  p(x,v,s),s), \\  \dot{ p }(x,v,s) = 0, \\ \gamma(x,v,t) = x, \\ p(x,v,t) = v\end{matrix} 
 \bigg\} \\
&=&- \inf_{v \in \mathbb{R}^d } \bigg\{ g^*(v ) + \int_{0}^t  H( v , s) ds  - \langle x,v \rangle  \bigg\}
\eqnx
which gets us back to the Hopf formula \cite{rublev}.  
Note that in this example  assumption in Lemma \ref{hahalolXD} or in Lemma \ref{hahalolXDsayonara} may not be satisfied since $U$ may not be compact.

\noindent \textbf{Example 2} 
When $H(x,p,t)$ is a \textbf{non-convex} homogeneous degree-$1$ functional w.r.t. $p$ in the following form:
\beqnx
H(x,p,t) = c_1 (x,t) \Phi_1 (p) - c_2 (x,t) \Phi_2 (p)
\eqnx
where $\Phi_1$ and $\Phi_2$ are with their Wulff sets $W_1,W_2$ \cite{Wulff} as strictly convex set with smooth boundary $\partial W_i \in C^\infty, i = 1,2$, then by definition of Mordukhovich subdifferential, we have
\beqnx
\partial_p^- H(x,p,t) = 
\begin{cases}
c_1 (x,t) \partial_p \Phi_1 (p) - c_2 (x,t)  \partial_p \Phi_2 (p) & \text{ if } p \neq 0 \\
\emptyset & \text{ if } p = 0 \,. \\
\end{cases} 
\eqnx
Therefore assuming Conjecture \ref{conj2}, i.e. Equation \eqref{hopf_formula} and Remark 2, we have:
\beqn
 \varphi(x,t)  
&=&-  \inf_{v \in \mathbb{R}^d \backslash \{0\} }   \bigg\{ g^*( p(x,v,0)) + \int_{0}^t  \bigg\{ \bigg( c_1(\gamma(x,v,s), s) -  \langle  \partial_x c_1 (\gamma(x,v,s), s),  \gamma(x,v,s) \rangle \bigg) \Phi_1(  p(v,s) )  \notag \\
& & \qquad \qquad \qquad -  \bigg( c_2(\gamma(x,v,s), s) -  \langle  \partial_x c_2 (\gamma(x,v,s), s) ,  \gamma(x,v,s) \rangle \bigg) \Phi_2(  p(x,v,s) ) \bigg\} 
ds  - \langle x,v \rangle  : \notag \\
& & \qquad \qquad
 \begin{matrix} \dot{ \gamma }(x,v,s) =   c_1 (\gamma(x,v,s),s)  \partial_p  \Phi_1 (p(x,v,s)) -  c_2 (\gamma(x,v,s),s) \partial_p  \Phi_2 (p(x,v,s)) \\
\dot{ p }(x,v,s) = - \partial_x c_1 (\gamma(x,v,s), s) \Phi_1 (p(x,v,s)) -  \partial_x c_2 (\gamma(x,v,s), s) \Phi_2 (p(x,v,s)) , \\ \gamma(x,v,t) = x, \,  p(x,v,t) = v 
\end{matrix} 
 \bigg\} \,. \label{formula_special}
\eqn
When $H(x,p,t) = - c (x) |p|_2$, we have, from \eqref{formula_special} and integration by parts, that
\beqnx
 \varphi(x,t) = \sup_{v \in \mathbb{R}^d  \backslash \{0\} } \bigg\{  g( \gamma(x,v,t)) :   \qquad \qquad
 \begin{matrix} \dot{ \gamma }(x,v,s) =   c_2 (\gamma(x,v,s)) p(x,v,s) /  |p(v,s)|_2  \\ 
\dot{ p }(x,v,s) =  -  \partial_x c_2 (\gamma(x,v,s) )  |p(x,v,s)|_2 , \\ \gamma(x,v,0) = x, \, p(x,v,0) = v\end{matrix} 
 \bigg\} 
\label{hugyen_2}
\eqnx
This is again an Hugyens principles in disguise.
In these examples, assumption in Lemma \ref{hahalolXD} or in Lemma \ref{hahalolXDsayonara} are not satisfied.

When $H(x,p,t)$ is, on the other hand, a \textbf{convex} homogeneous degree-one functional w.r.t. $p$, i.e. when $c_2(x,t) = 0$, and hence $H(x,p,t) = c(x,t) \Phi(p)$ for some $c >0$ and $\Phi$, then assuming Conjecture \ref{conj2}, i.e. Equation \eqref{hopf_formula} and Remark 2, we obtain
\beqnx
& & \varphi(x,t)  \notag \\
&=&- \inf_{v \in \mathbb{R}^d\backslash \{0\} }  \bigg\{ g^*( p(x,v,0)) + \int_{0}^t  \bigg\{ \bigg( c(\gamma(x,v,s), s) -  \langle  \partial_x c (\gamma(x,v,s), s) ,  \gamma(x,v,s) \rangle \bigg) \Phi(  p(v,s) )  \bigg\} 
ds  - \langle x,v \rangle  :  \notag \\
& & \qquad \qquad
 \begin{matrix} 
(\gamma, p ) \in C^\infty\\
\dot{ \gamma }(x,v,s)  \in   c (\gamma(x,v,s),s) \partial_p^- \Phi( p(x,v,s) ) \\
\dot{ p }(x,v,s) = - \partial_x c (\gamma(x,v,s), s) \Phi (p(x,v,s))  , \\ \gamma(x,v,t) = x,  \, p(x,v,t) = v 
\end{matrix} 
 \bigg\} \,.
\eqnx
Note that in this example assumptions in Lemma \ref{hahalolXD} are satisfied but that of Lemma \ref{hahalolXDsayonara} are not satsified.

\noindent \textbf{Example 3} 
When the HJ PDE comes from an ODE system in a differential game, we have the following finite horizon problem with initial state $x \in \mathbb{R}^d$. We consider the (Lipschitz) solution $x: [t,T] \rightarrow \mathbb{R}^d$ of the following linear dynamic system with initial condition $x$ at time $t$:
\beqnx
\begin{cases}
\frac{d x}{ d s}(s) = M x(s) + N_C(s) a(s) + N_D(s) b(s) \quad \text{ in } \quad (t,T)  \\
x(t) = x
\end{cases}
\label{ODE}
\eqnx
where $M$ is a given $d \times d$ matrix independent of time, and $\{N_C(s)\}_{t<s<T}$, $\{N_D(s)\}_{t<s<T}$ are two families of $d \times d$ matrices with real entries. Using the notation in section \ref{sec3}, if we let
\beqnx
f(t,x,a,b) &=& M x(s) + N_C(s) a + N_D(s) b \\
h(t,x,a,b) &=& -  \mathcal{I}_{C(t)} (p) +  \mathcal{I}_{D(t)} (p) \,,
\eqnx
for some family of convex sets $\{C(s)\}_{t<s<T} \subset C$, $\{D(s)\}_{t<s<T} \subset D$ ,
our Hamiltonian read:
\beqnx
\tilde{H}^\pm(x,p,t)  = \max_{b \in D(t)} \min_{a \in C(t)} \{ - \langle Mx +  N_C(t)a +  N_D(t) b , p \rangle \}  =  - \langle Mx, p\rangle + \Phi_{C(t)} (-N_C^*(t)p) - \Phi_{D(t)}  (-N_D^*(t)p)  \,.
\eqnx
where 
\beqnx
\Phi_{W}(p) : =  \max_{x \in W}  \langle x, p \rangle \,.
\eqnx
After a change of variable to get from a finite time PDE problem $(-\infty, T)$ to an initial time PDE problem $(0,\infty)$, we have the upper/lower values $ \varphi(x,t) := U(x,T-t) = V(x,T-t) $ satisfy:
\beqnx
\begin{cases}
\frac{\partial }{\partial t} \varphi + H(x, \nabla_x \varphi ,t ) = 0 &\text{ on } \mathbb{R}^d \times (0,\infty) \,,\\
\varphi(x,0) = g(x) &\text{ on } \mathbb{R}^d  
\end{cases}
\eqnx
where the Hamiltonian $H$ is now
\beqnx
H(x,p,t) = \tilde{H}^\pm(x,p,T-t) = - \langle Mx, p\rangle + \Phi_{C(T-t)} (-N_C^*(T-t)p) - \Phi_{D(T-t)}  (-N_D^*(T-t)p) \,.
\eqnx

Now, notice that $H$ is smooth w.r.t. $x$, assuming Conjecture \ref{conj2}, i.e. Equation \eqref{hopf_formula} and Remark 2, together with the fact that $\partial_x H(x,p,t) = - M^* p$ and a change of variable, we get to same formula as in (2.5) in \cite{Hopf_Lax_3}. In fact, The generalized Hopf formula gives
{\footnotesize
\beqnx
& & \varphi(x,t) \\
&=&- \min_{p \in \mathbb{R}^d } \bigg\{ g^*( e^{ - M^* T } p ) + \int_{0}^t  \bigg\{ 
\Phi_{C(T-s)} \left( [ - e^{ - M (T-s)}N_C(T-s)]^*     p  \right) - \Phi_{D(T-s)}  \left(  [- e^{ - M (T-s)}  N_D(T-s)]^*  p  \right) \bigg\}  ds  - \langle e^{- M(T- t)} x,    p \rangle 
 \bigg\} \\
\eqnx
}If we write $z = e^{ - M (T-t) } x$ and write 
$J( z ) = g( e^{ M T }  z ) $,
then this change of coordinate in the state variable by $e^{ M T } $ gives rise to a sympletic change of coordinate in the phase variables by $\text{diag}( e^{ M T },  e^{ -M^* T  } ) $, hence
\beqnx
J^*(p) = g^*(e^{ - M^* T } p) \,,
\eqnx
and therefore we get to 
\beqnx
& & \varphi(x(z),t) \\
&=& - \min_{p \in \mathbb{R}^d } \bigg\{ J^*( p ) + \int_{0}^t  \bigg\{ 
\Phi_{C(T-s)} \left(  [ - e^{ - M (T-s)}N_C(T-s)]^*     p  \right) - \Phi_{D(T-s)}  \left(  [- e^{ - M (T-s)}  N_D(T-s)]^*  p  \right) \bigg\}  ds  - \langle z ,    p \rangle 
 \bigg\} 
\eqnx
which is the same formula as in (2.5) in \cite{Hopf_Lax_3}.
Note that in this example, both the assmptions in Lemma \ref{hahalolXD} and Lemma \ref{hahalolXDsayonara} are satsified.

\section{Numerical methods}   \label{sec5}

\subsection{Optimization Methods: Coordinate Descent}

In order for computation of optimization (in either the Lax formulation or the Hopf formulation)  to be efficient, we have recast the initial value HJ PDE problem to minimization problem in $d$ dimensions, where the curves $\gamma$ and $p$ inside the function evaluation (given a vector $v$) are defined explicitly as the solutions to the bi-characteristic equation.  We suggest to solve the ODE's numerically, given a pair of $(x,v)$, up to time $t$ using any ODE solver.  This way the minimization/maximization problem reduces to optimization of a finite-dimensional problem (as a function of $v$.)  Similar to \cite{Hopf_Lax_3}, we suggest to apply coordinate descent to the following functionals with argument $v$ for a given pair $(x,t)$: 
\beqnx
\mathcal{F}^1_{x,t} (v) :=  g( \gamma(x,v,t,0)) + \int_{0}^t \left\{ \langle p(x,v,t,s), \partial_p H( \gamma(x,v,t,s), p(x,v,t,s),s ) \rangle - H(\gamma(x,v,t,s), p(x,v,t,s), s ) \right\} ds 
\eqnx
or (noticed we omitted $r$ in this functional, but $r$ is actually the final time of the ODE $(\gamma,p)$ stated below)
\beqnx
\mathcal{F}^2_{x,t} (v) :=  \min_{0 \leq r \leq t} \left\{ g( \gamma(x,v,r,0))  \right\}
\eqnx
or
\beqnx
\mathcal{G}_{x,t} (v) :=  g^*( p(x,v,t,0)) + \int_{0}^t  \bigg\{ H(\gamma(x,v,t,s), p(x,v,t,s), s) -  \langle \partial_x H( \gamma(x,v,t,s), p(x,v,t,s),s),  \gamma(x,v,t,s) \rangle  \bigg\} ds  - \langle x,v \rangle  
\eqnx
where in either case, the pair $\gamma(x,v,t,s), p(x,v,t,s) $ solves the following final value problem for the given pair of $x,t$ (with the dependence of the curves w.r.t. $x,t$ clearly indicated in the notion):
\beqnx
\begin{cases}
 \dot{ \gamma }(x,v,t,s) = \partial_p H( \gamma(x,v,t,s), p(x,v,t,s),s), \\  \dot{ p }(x,v,t,s) = - \partial_x H( \gamma(x,v,t,s), p(x,v,t,s), s), \\ \gamma(x,v,t,t) = x, \\ p(x,v,t,t) = v
\end{cases}
\eqnx

Now, to minimize $\mathcal{F}^{1}_{x,t} (\cdot)$, $\mathcal{F}^{2}_{x,t} (\cdot)$ or $\mathcal{G}_{x,t} (\cdot)$, we utilize a cyclic coordinate descent algorithm.  We illustrate our algorithm with the functional $\mathcal{G}_{x,t} (\cdot)$ :
\begin{algorithm}
Take an initial guess of the Lipschitz constant $L$, and set ${count} := 0$.
Initialize $j_{1} := 1$ and a parameter $\alpha := 1/L$.   For $k=1,....,M$, do:
\begin{itemize}
\item[\textbf{1}:]
\beqnx
\begin{cases}
v^{k+1}_i  =  v^k_i - \alpha \, \partial_i  \mathcal{G}_{x,t}  (v^{k})  & \text{ if } i = j_k,\\
v^{k+1}_i  =  v^k_i  & \text{ otherwise. }
\end{cases}
\eqnx
\item[\textbf{2}:]
\beqnx
j_{k+1} :=  j_k + 1.
\eqnx
If $j_{k+1}=d+1$, then reset $j_{k+1}=1$.
\item[\textbf{3}:]
If $| v^{k+1}_i  -  v^k_i| > \varepsilon$, then set $\text{count} := 0$. \
If $k =M$, then reset $k:=0$ and set $\alpha := \alpha/2$,  (i.e. let $L := 2L$.)
\item[\textbf{4}:]
If $| v^{k+1} -  v^k| < \varepsilon$, set $\text{count} := \text{count} +1$.
\item[\textbf{5}:]
If $\text{count} =d$, stop.
\end{itemize}
Return $v_\mathrm{final} =  v^{k+1}$.
\end{algorithm}
We minimize $\mathcal{F}^{1}_{x,t} (\cdot)$, $\mathcal{F}^{2}_{x,t} (\cdot)$ in a similar fashion.  In this algorithm, we will need to discuss how to evaluate the functional values and also their numerical derivatives.  This will be discussed in the next subsection.

\subsection{Evaluation of functional and its derivatives: ODE solver, numerical differentiation and integration} \label{int_diff_num}

In this subsection, we discuss several numerical approximation used in our numerical experiments. 
First, in our step, we need to devise by ODE solvers.  We suggest to use the standard forward Euler solver for a given stepsize $\Delta s$.  Of course the performance can be improved by using more advanced solvers, such as the pseudo-spectral method, etc.
We also need to deal with approximating integrals.   Similar to \cite{Hopf_Lax_3}, we suggest to evaluate either the derivatives of $\mathcal{F}^1_{x,t}$ or $\mathcal{G}_{x,t}$ by numerical quadrature rules for integral computations.
As for $\mathcal{F}^2_{x,t}$, we compute the maximum by directly choosing the maximum around the computed grid from the ODE solver in the case when $H(x,p,t) = H(x,p)$, since the ODE will be the same in this case and the ODE solver will get to the function values of $\gamma(v,s)$ easily.
\beqn
\mathcal{F}^1_{x,t} (v) \approx \min_{r_i = i \Delta s, i=0, ..., t/\Delta s} \left\{ g( \gamma(x,v,r_i, 0))  \right\}
\label{num_min}
\eqn
with a same choice of $\Delta s$ as the ODE solver. 
We also suggest, as in \cite{Hopf_Lax_3}, a numerical differentiation rule for derivative computations.
We integrate using a standard rectangular quadrature rule (we use $\mathcal{F}^1_{x,t}$ to illustrate):

\beqn
& & \mathcal{F}^1_{x,t} (v) \notag \\
& \approx& 
g( \gamma(x,v,t,0)) \notag \\
& & + \sum_i  \left\{ \langle p(x,v, t, i \Delta s), \partial_p H( \gamma(x,v, t, i \Delta s), p(x,v, t, i \Delta s), i \Delta s ) \rangle - H(\gamma(x,v,t, i \Delta s), p(x,v, i \Delta s),  t, i \Delta s) \right\}  \Delta s  \notag \\
\label{num_int}
\eqn
again with the same choice of $\Delta s$ as in the ODE solver. 
We suggest approximating the partial derivative $\partial_i \mathcal{F}_{x,t}(p)$ (which means the differentiation of the function w.r.t. the direction $e_i$) by a finite difference:
\beqn
\partial_i \mathcal{F}_{x,t}(v)  \approx \frac{ \mathcal{F}^1_{x,t} (v + \sigma e_i)  -  \mathcal{F}^1_{x,t}(v) }{\sigma}
\label{num_diff}
\eqn
with a given choice of $\sigma$.
By using numerical differentiation, we have the advantage of not necessarily handling tedious analytic computations of the derivative of Hamiltonian which might be singular at times.  Also, we only have two evaluations of the function value per iteration.
By performing numerical approximations, either ODE solvers, differentiation or integration, we are bound to introduces numerical errors.  These errors introduced by numerical approximation can be effectively controlled by choosing appropriately small sizes of $\Delta s$ and $\sigma$.

\subsection{Certificate of Correctness}

The method we compute a sequence is guaranteed to converge to a local minimum under an assumption of lowe semi-continuity and boundedness of the functional.  However such a descent type algorithm cannot guarantee convergence to global optimal.  However, we can checked the correctness of the vector $v$ that is computed $p(0) \in \partial g (\gamma (0))$. 
With this certificate, in case a local optimal does not satisfy the assumption, we can discard the value thus computed and restart the algorithm with another initial guess.

\section{Numerical Results} \label{sec6}

In this section, we provide numerical experiments which compute viscosity solutions to HJ PDE with a time-dependent
Hamiltonian arising from control system.  For a given set of points $(t,z)$ , we use
\textbf{Algorithm 1} to compute \eqref{HJeqn}-\eqref{HJinitial}.  We set $M = 500$ and have a different initial guess of the Lipschitz constant $L$ in each example.  We evaluate $(x,t)$
in a given set of grid points over some $2$ dimensional cross-sections of the form $ [-3 , 3]^2 \times \{0\}^{d-2} $. We choose our error tolerance
in the coordinate descent iteration as $\varepsilon = 0.5 \times 10^{-7}$, which acts as
our stopping criterion.
The step-size in the numerical quadrature rule in \eqref{num_min}, \eqref{num_int} as well as the forward Euler ODE solver is set to be different in each example, and they are all denoted as $\Delta s$. The step-size for numerical differentiation in \eqref{num_diff} denoted by $\sigma$, and the Lipschitz constants $L$ in \textbf{Algorithm 1}  are also chosen differently in each in each example.  
In all our examples, we set random initial starting points uniformly distributed in $ [-2 , 2]^d$.  
We always consider the initial value to be a function with zero level set as an ellipse enclosed by the equation $ \langle x, A x \rangle = 1 $ where $A^{-1} = \text{diag}(1, 25/4,1/4,1/4,...,1/4)$, i.e. our initial condition for the
HJ PDE is $$g(x) = \frac{1}{2} (   \langle x, A x \rangle -1) $$
with the aforementioned $A$.  
For a convex Hamiltonian, we make one initial guess, and for a non-convex Hamiltonian, we perform at most $20$ independent trials of random initial guesses to get rid of possible local minima or in places when the derivative of the viscosity solution does not exist.
We present our run-time in the format of
$ a \times 10^{-c} s \times k$ where $k$ is the number of initial guesses made in the respective example. 
When $d=2$, clear comparison is performed with our solution to the solution computed by a first order Lax-Friedrichs monotone scheme \cite{WENO} with $\Delta t=0.001$ and $\Delta x =0.005$. 
We perform such comparison because we do not have an explicit solution for neither one of our examles, and therefore Lax-Friedrichs scheme (which has theorectical convergence guarantee) is used to compare with the solution we computed using the minimization/maximization principles.
Our algorithm is implemented in C++ on an $1.7$ GHz Intel Core i7-4650U
CPU. In what follows, we present some examples.

\vskip 5mm

\noindent \textbf{Example 1} We solve for the state-dependent Hamiltonian of the linear form
\begin{equation*}
H(x,p,t) =  - 0.2 \, c(x) - \nabla c(x) \cdot p \,,
\end{equation*}
where
\begin{equation*}
c(x) = 1 + 3 \exp( - 4 | x - (1,1,0,0...,0 ) |_2^2) \,.
\end{equation*}
The minimization principle \eqref{lax_formula} is used to compute the solution $\varphi$.
The temporal stepsize is chosen as $\Delta s = 0.02$.  
In this example, since the Hamiltonian is linear, there exists only a \textbf{unique} $v$ such that the constraints of $(\gamma,p)$ in the bi-characteristics are satisfied, and therefore there is nothing to optimize, thus no choices of $\sigma$ and $L$ are necessary.  
Figure \ref{exp_0} (left) gives the solutions when $d = 2$, $T= 0.12$. The runtime using C++ is $ 9.929 \times 10^{-7} s \times 1$ per point.
Figure \ref{exp_0} (right) is the solution computed by the Lax-Friedrichs scheme, which illustrates that the two solutions coincide.
Figure \ref{exp_05} gives the solutions when $d = 1024$, $T= 0.5$. The runtime using C++ is $ 4.342 \times 10^{-2} s \times 1$ per point.
The computation time is reasonably fast for a $1024$ dimensional problem (although in this toy example, there is no optimization to solve).

This example satisfies the assumptions in Lemma \ref{lolXD}, but does not satisfies the assumptions in Lemma \ref{halolXD}, Lemma \ref{hahalolXD} and in Lemma \ref{hahalolXDsayonara}.

\begin{figurehere}
     \begin{center}
     \vskip -0.3truecm
   \scalebox{0.4}{\includegraphics{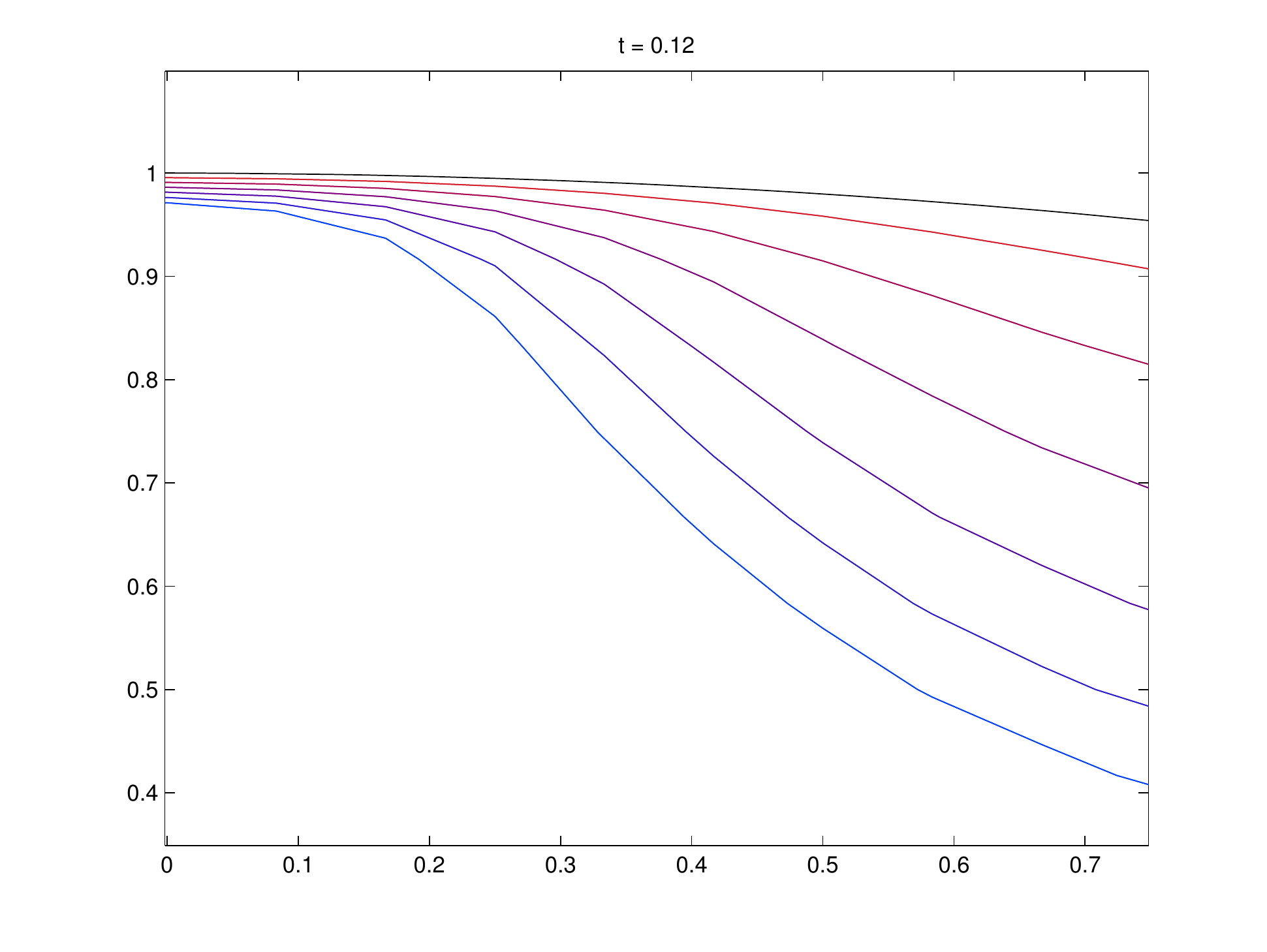}}
           \scalebox{0.4}{\includegraphics{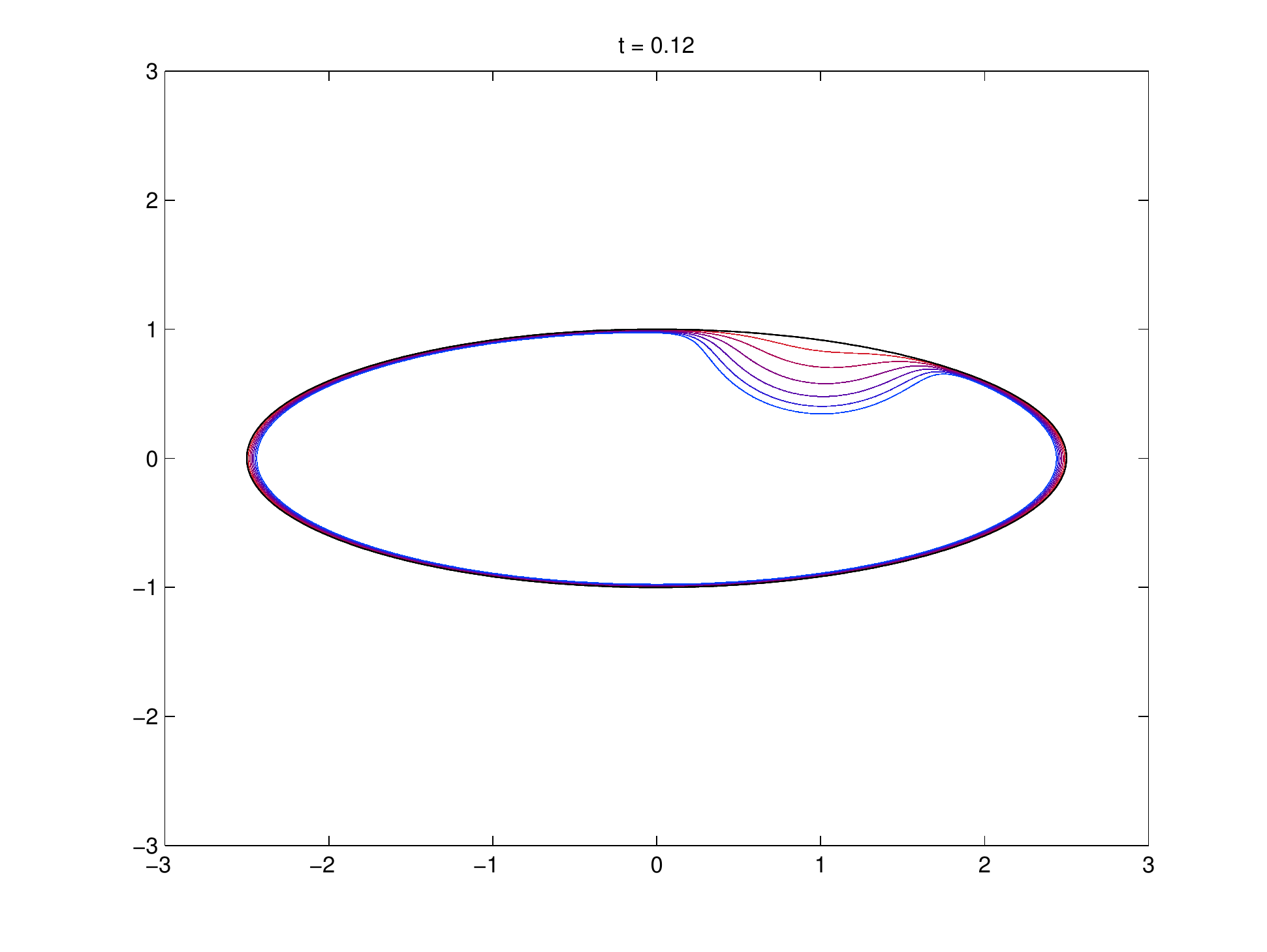}} \\
   \scalebox{0.4}{\includegraphics{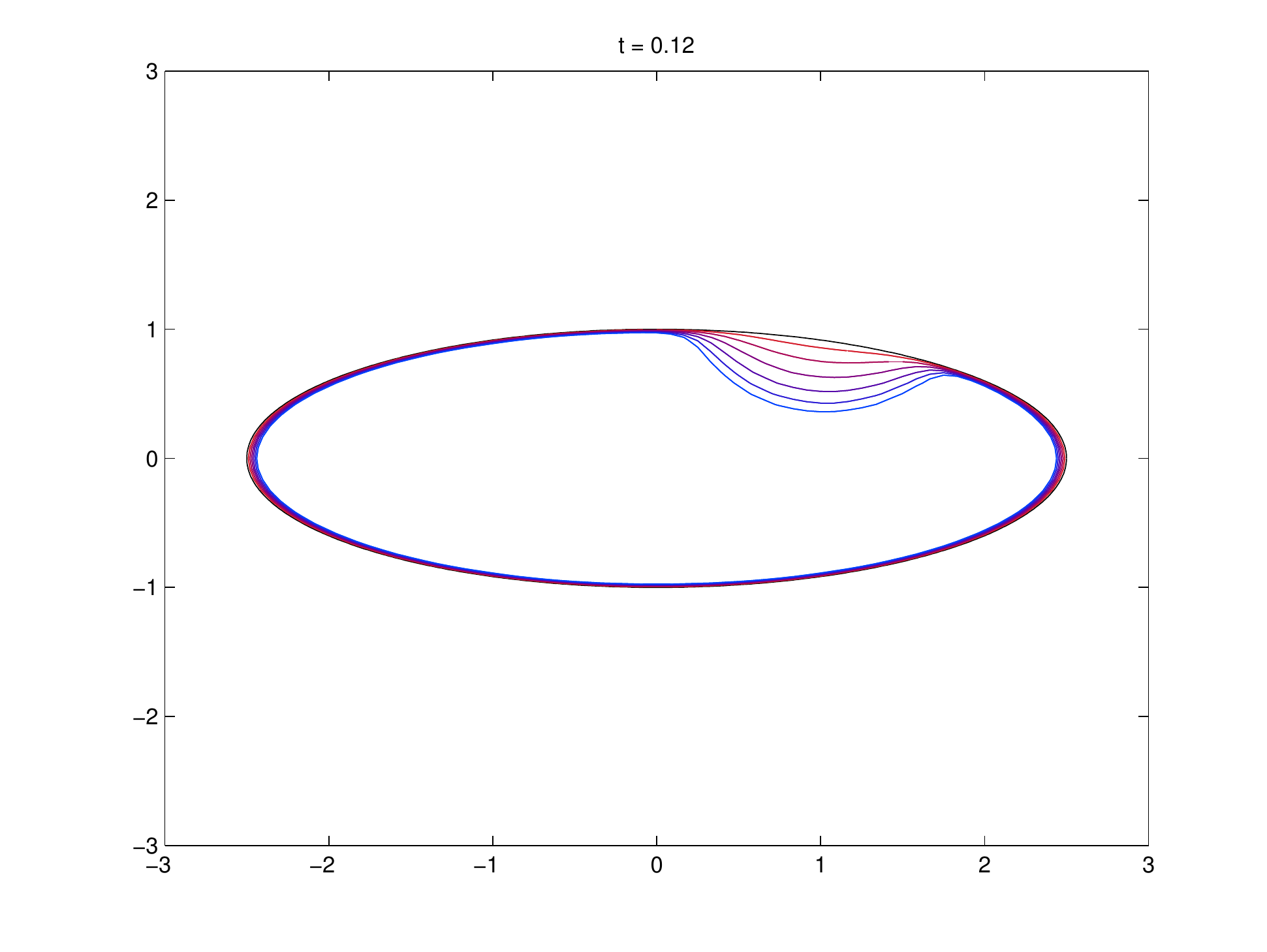}}
           \scalebox{0.4}{\includegraphics{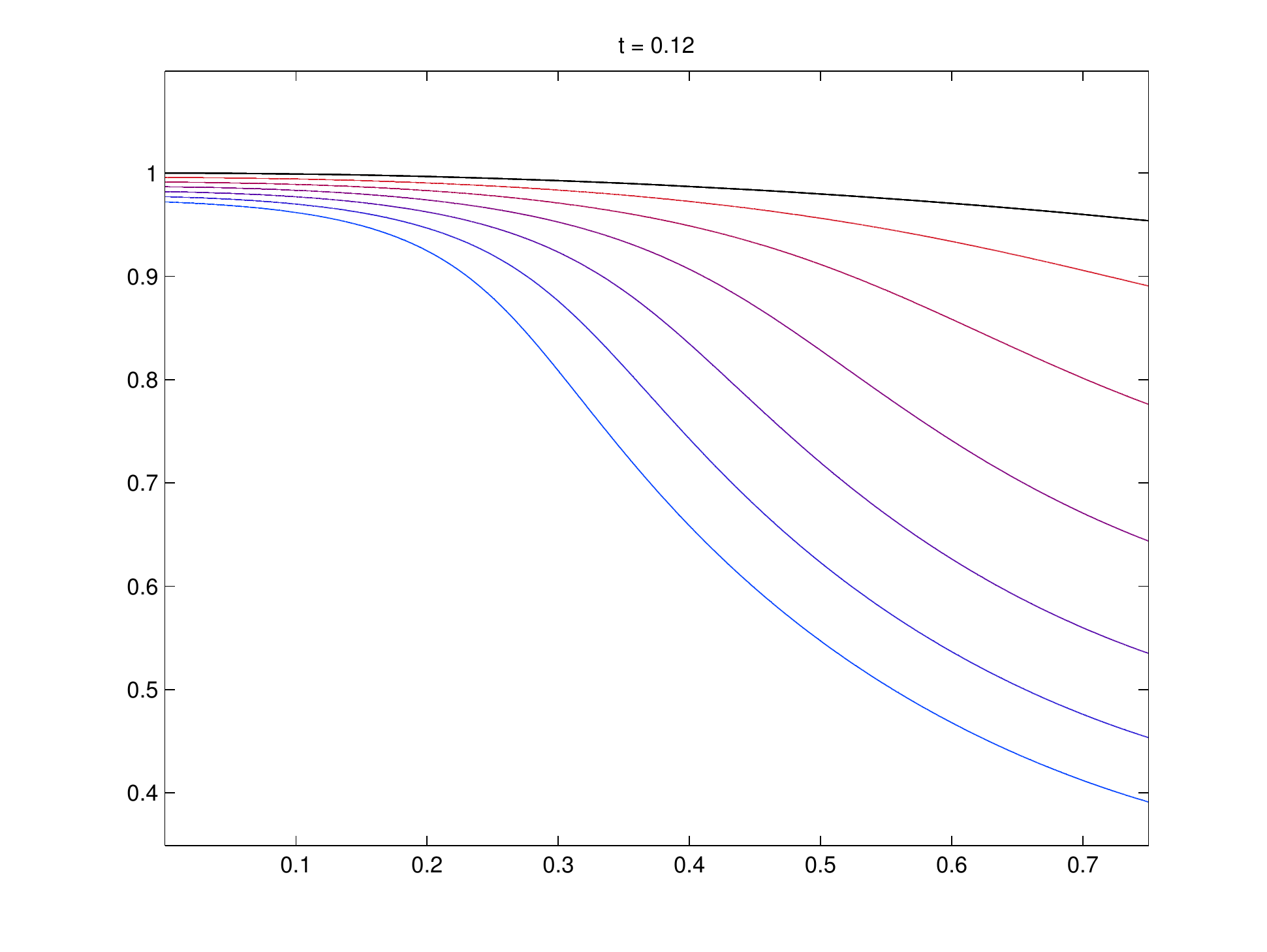}} \\
\caption{\small Zero level sets of the solution $\phi(\cdot,t)$ for $t = 0.02,0.04,...,0.12$ in \textbf{Example 1} with $d =2$; left: minimization/maximization principle, right: Lax-Friedrichs.}\label{exp_0}
     \end{center}
 \end{figurehere}

\begin{figurehere}
     \begin{center}
     \vskip -0.3truecm
   \scalebox{0.4}{\includegraphics{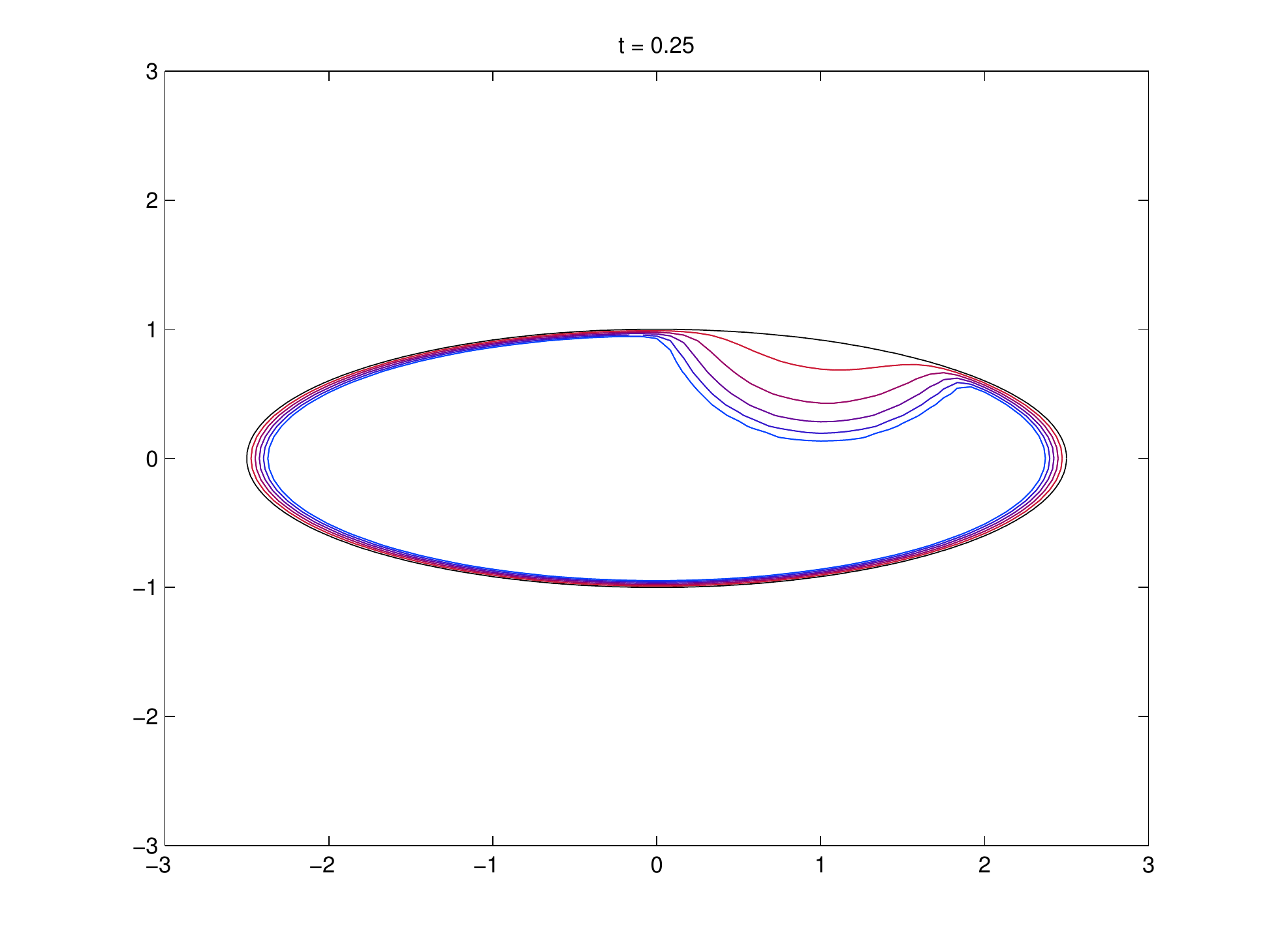}} 
   \scalebox{0.4}{\includegraphics{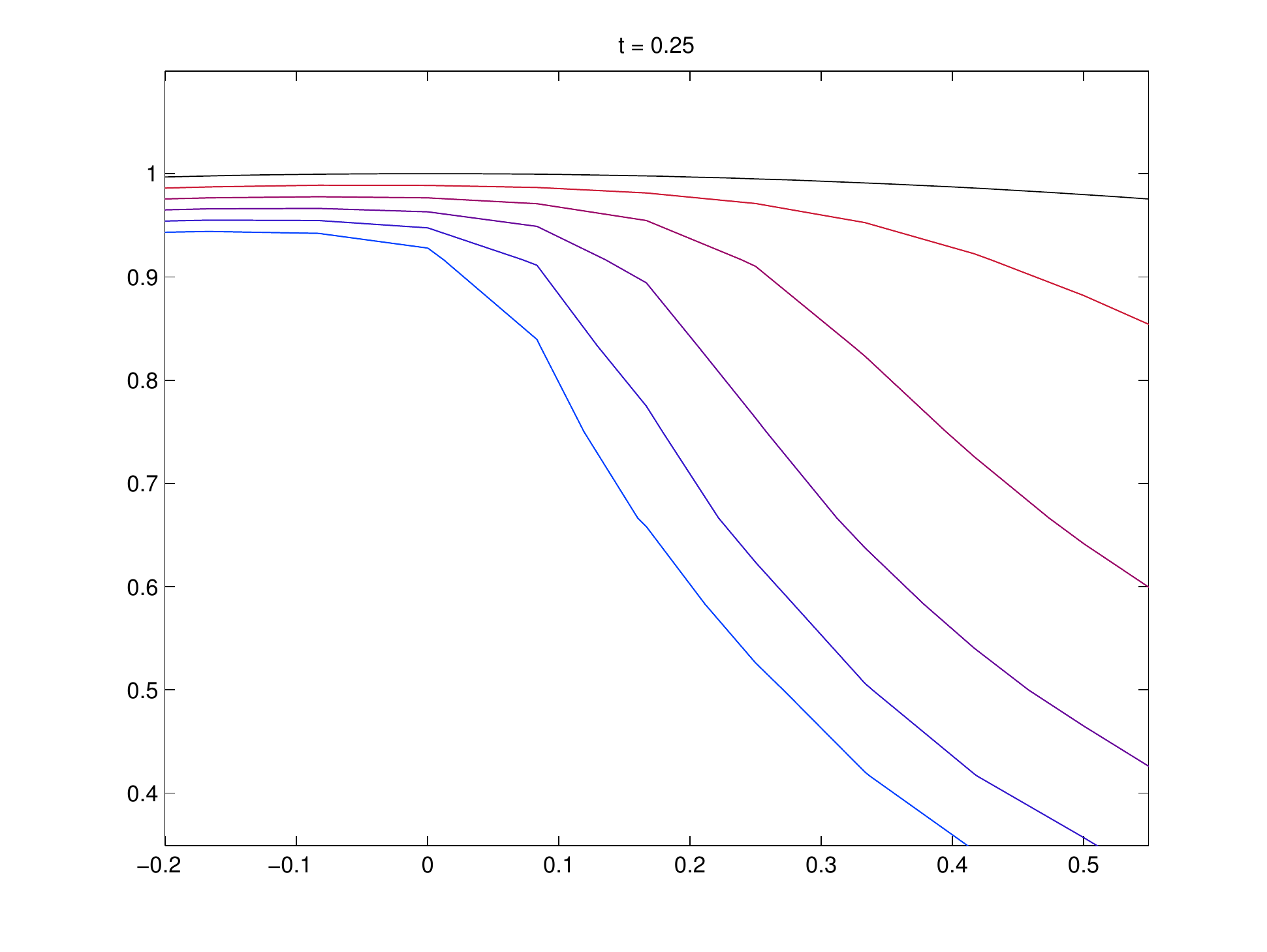}} 
\caption{\small  Zero level sets of the solution $\phi(\cdot,t)$ for $t = 0.05,0.1,...,0.25$ by minimization/maximization principle in \textbf{Example 1} with $d =1024$; left: full-size; right: close-up.}\label{exp_05}
     \end{center}
 \end{figurehere}

\noindent \textbf{Example 2} Next we solve for the state-dependent Hamiltonian, the well-known harmonic oscillator:
\begin{equation*}
H^{\pm}(x,p,t) = \pm \frac{1}{2} ( | p |^2_2 + | x |^2_2) \,,
\end{equation*}
The maximization principle \eqref{hopf_formula} is used to compute the solution $\varphi$ for both of the $\pm$ cases.
The temporal stepsize is chosen as $\Delta s = 0.02$.  
The other constants are chosen as follows: stepsize $\sigma = 0.001$ and $L= 3$.
Figure \ref{exp_07} (left) gives the solutions when $d = 2$ and sign of $H$ is negative, i.e. $H^{-}$, $T= 0.5$. 
The runtime using C++ is $ 2.366 \times 10^{-5} s \times 5$ per point.
Figure \ref{exp_07} (right) is the solution computed by the Lax-Friedrichs scheme for comparison.
Figure \ref{exp_08} (left) gives the solutions when $d = 2$ and sign of $H$ is positive, i.e. $H^{+}$, $T= 0.5$. 
The runtime using C++ is $ 2.605 \times 10^{-5} s \times 5$ per point.
Figure \ref{exp_08} (right) is the solution computed by the Lax-Friedrichs scheme for comparison.
Figure \ref{exp_09} gives the solutions when $d = 7$ and sign of $H$ is negative, i.e. $H^{-}$, $T= 0.2$.  
The runtime using C++ is $ 3.717 \times 10^{-4} s \times 5$ per point.
The computation time is minimal for a $7$ dimensional problem.

In order to test for an example with convex Hamiltonian but a non-convex initial condition, we take another initial function $g(x)$ as the following Rosenbrock function:
\beqn
g(x) = 
0.4  \times 10^{-3} \times  \left(  -100+ (1 - x_1 )^2 + 100 (1 + x_2 - x_1^2)^2 \right) \,.
\label{weird}
\eqn
The Hamiltonian is chosen as $H^{+}$ which is convex, and $T= 0.5$.  We choose $d = 2$ for comparision.
Figure \ref{exp_081} (left) gives the solutions using the minimization principle.
The runtime using C++ is $ 3.847 \times 10^{-5} $ per point.
Figure \ref{exp_081} (right) is the solution computed by the Lax-Friedrichs scheme for comparison.
To remark, both the convex case $H^+$ and nonconvex case $H^-$ in this example does not satisfy the assumptions of any of the lemmas proved in this work since $U$ may not be compact.

\begin{figurehere}
     \begin{center}
     \vskip -0.2truecm
   \scalebox{0.4}{\includegraphics{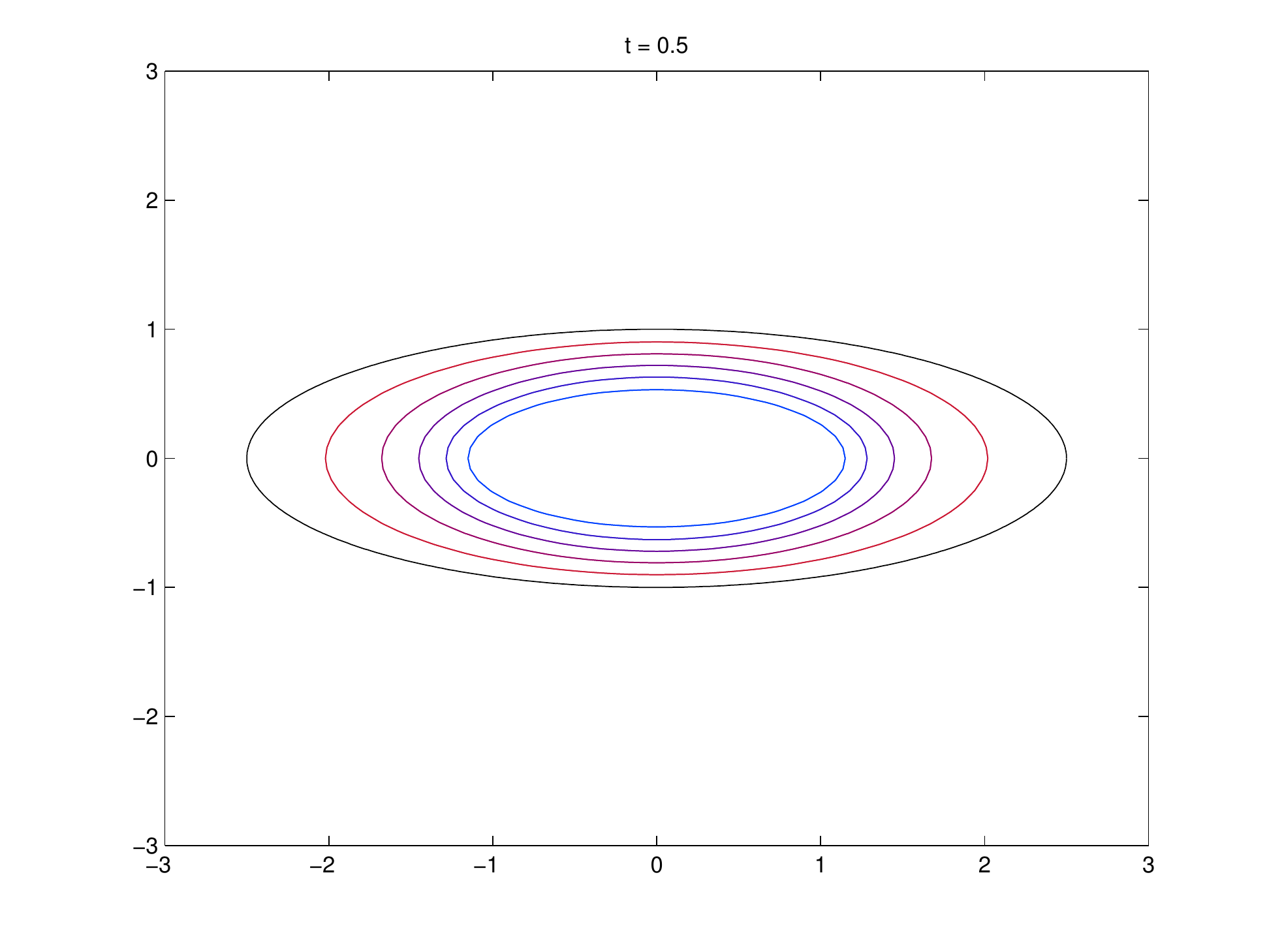}}
           \scalebox{0.4}{\includegraphics{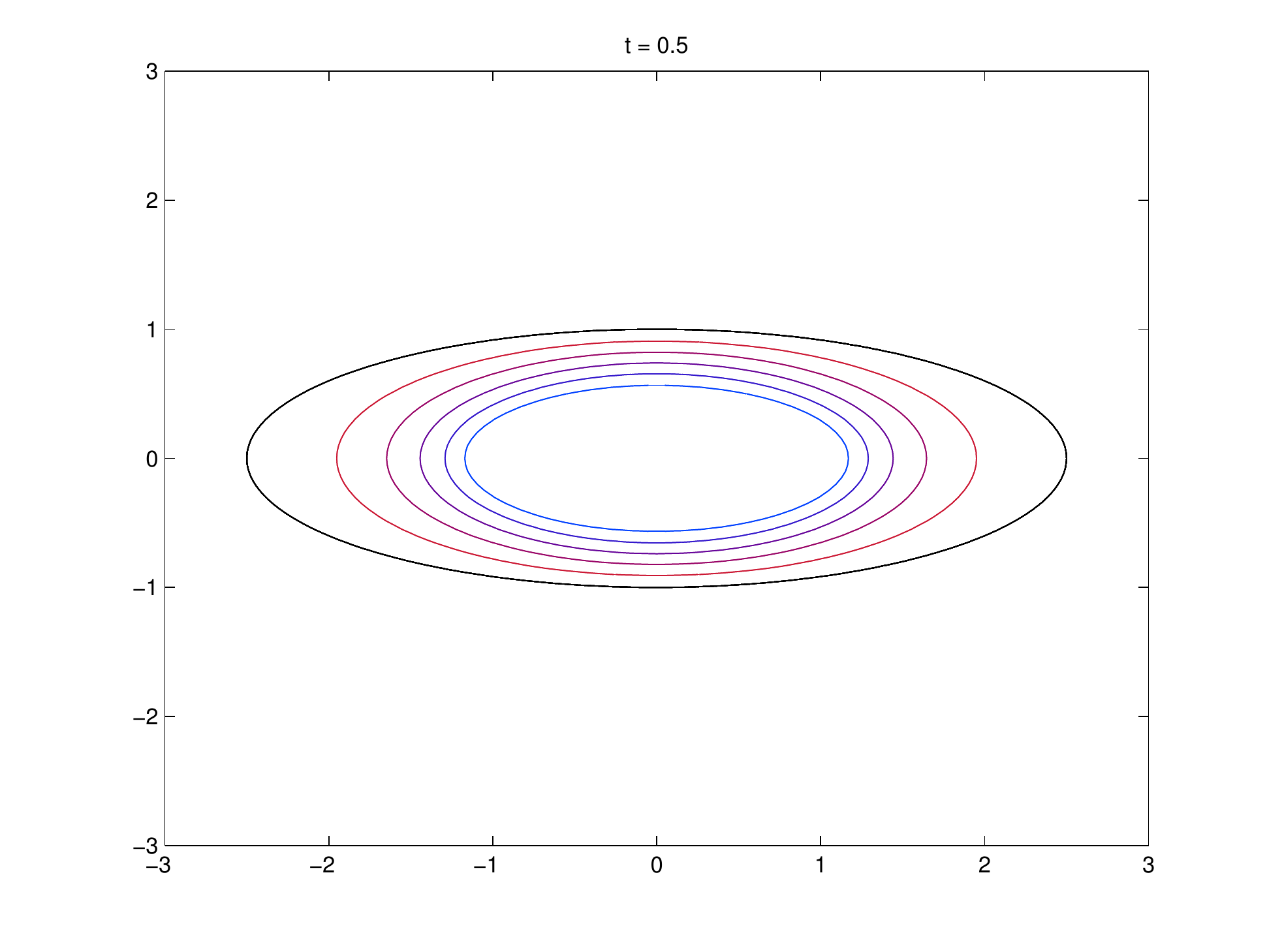}} \\
\caption{\small Zero level sets of the solution $\phi(\cdot,t)$ for $t = 0.1,0.2,...,0.5$ by minimization/maximization principle in \textbf{Example 2} with $H^{-}$ and $d =2$; top: full-size; bottom: close-up.}\label{exp_07}
     \end{center}
 \end{figurehere}

\begin{figurehere}
     \begin{center}
     \vskip -0.3truecm
   \scalebox{0.4}{\includegraphics{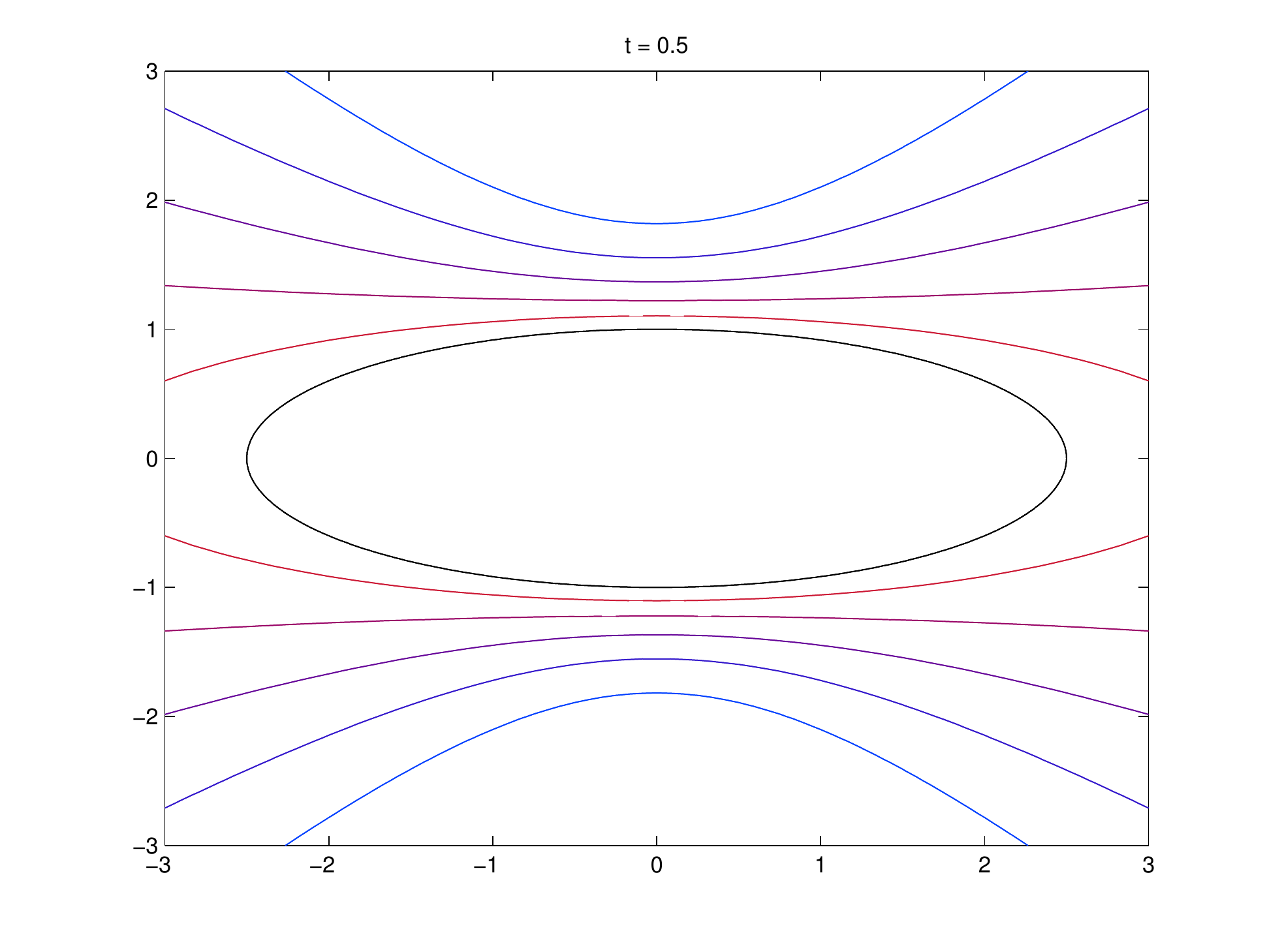}}
           \scalebox{0.4}{\includegraphics{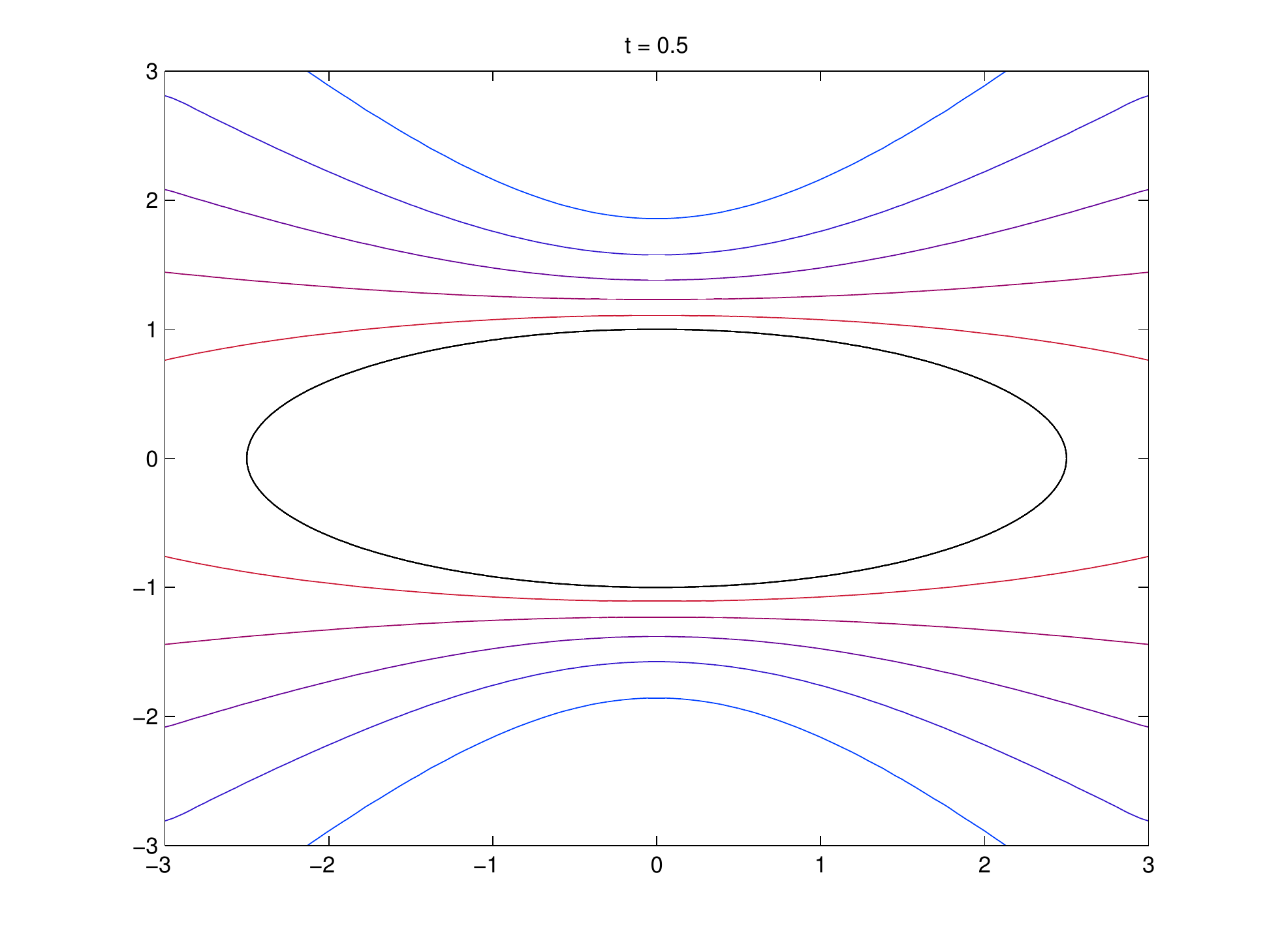}} \\
\caption{\small Zero level sets of the solution $\phi(\cdot,t)$ for $t = 0.1,0.2,...,0.5$ by minimization/maximization principle in \textbf{Example 2} with $H^{+}$ and $d =2$; top: full-size; bottom: close-up.}\label{exp_08}
     \end{center}
 \end{figurehere}

\begin{figurehere}
     \begin{center}
     \vskip -0.3truecm
    \scalebox{0.4}{\includegraphics{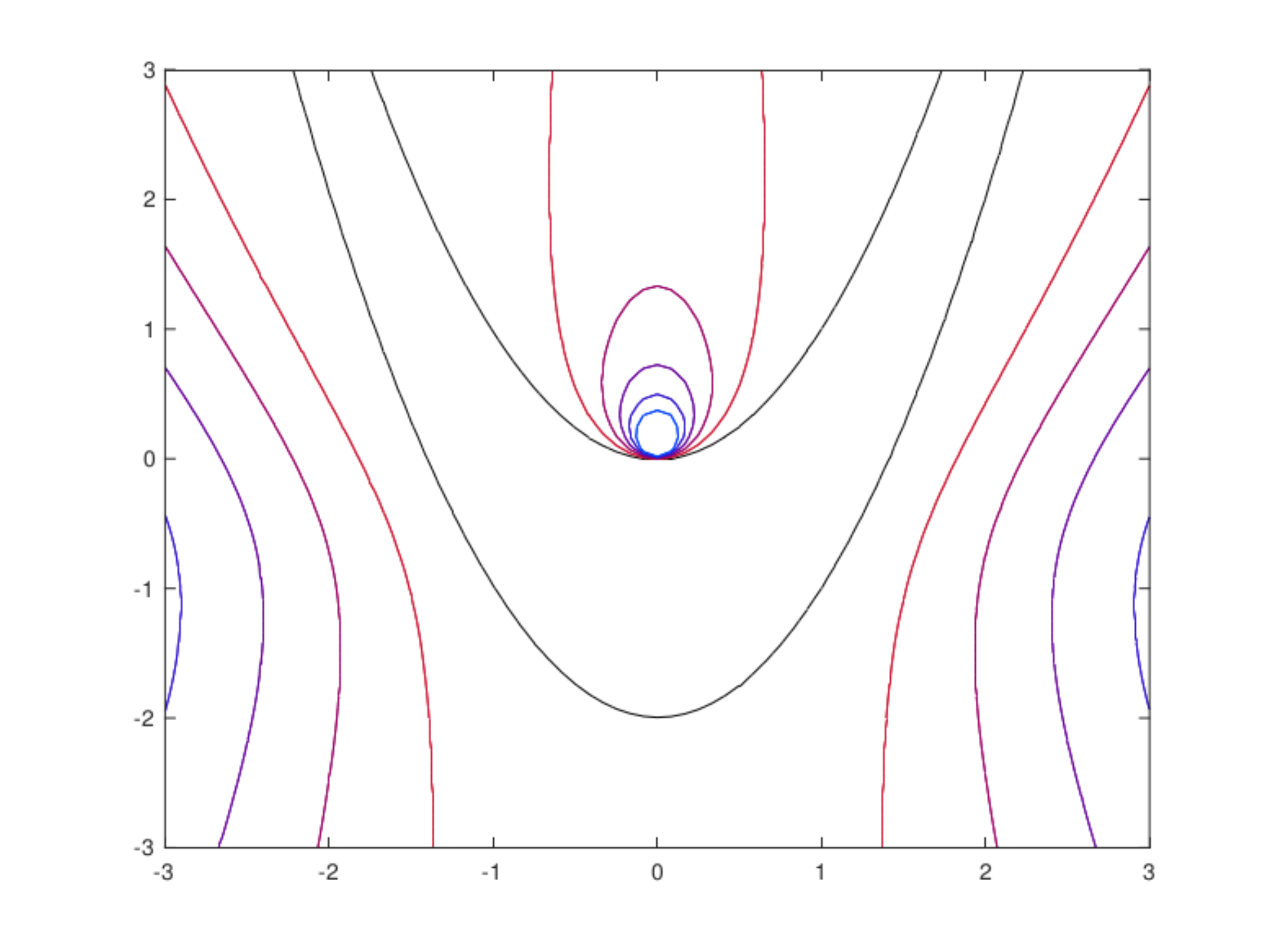}}
           \scalebox{0.4}{\includegraphics{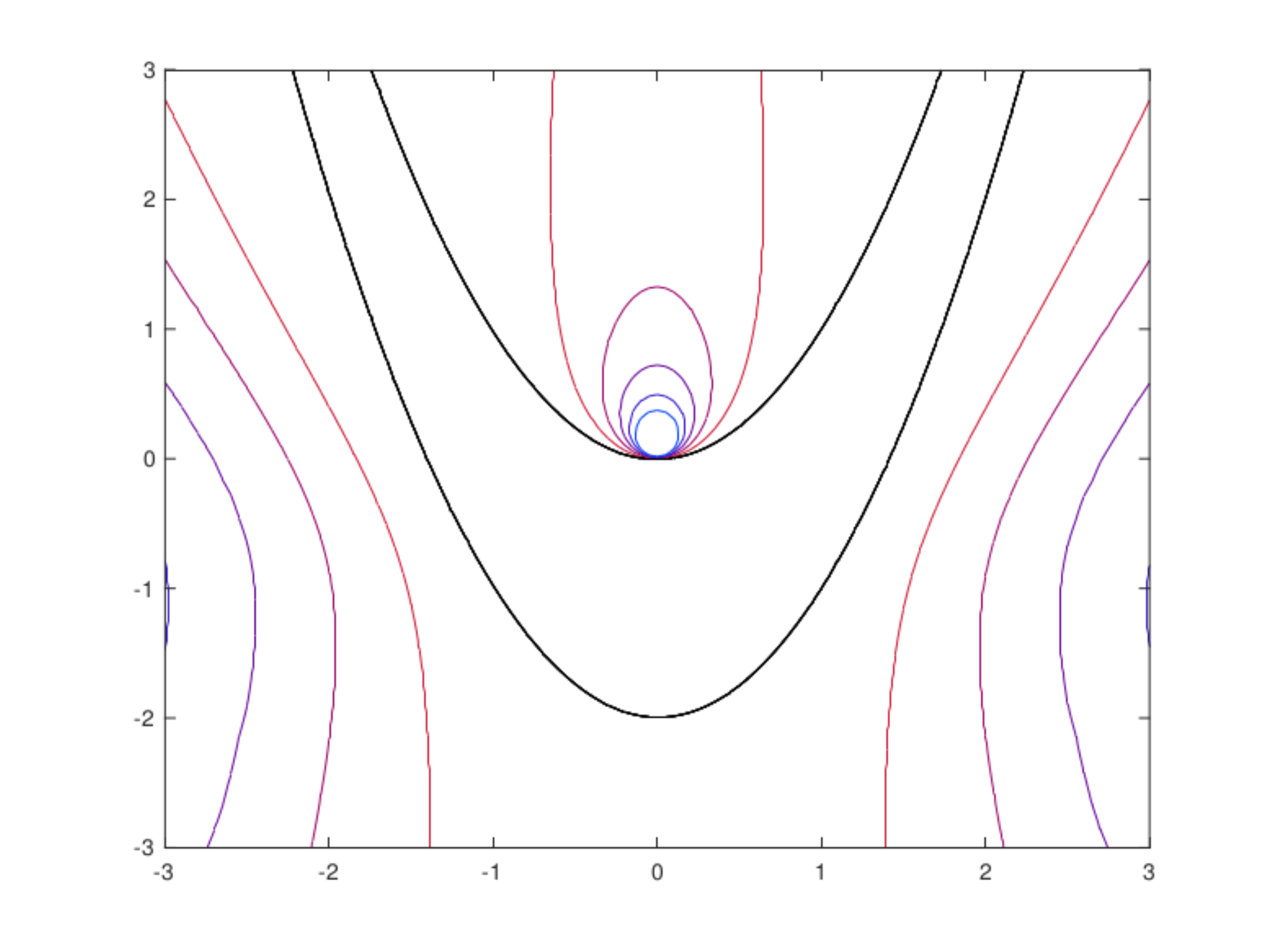}} \\
\caption{\small Zero level sets of the solution $\phi(\cdot,t)$ for $t = 0.1,0.2,...,0.5$ by minimization/maximization principle in \textbf{Example 2} with $H^{+}$, $d =2$ and non-convex initial data \eqref{weird}; top: full-size; bottom: close-up.}\label{exp_081}
     \end{center}
 \end{figurehere}

\begin{figurehere}
     \begin{center}
     \vskip -0.3truecm
   \scalebox{0.4}{\includegraphics{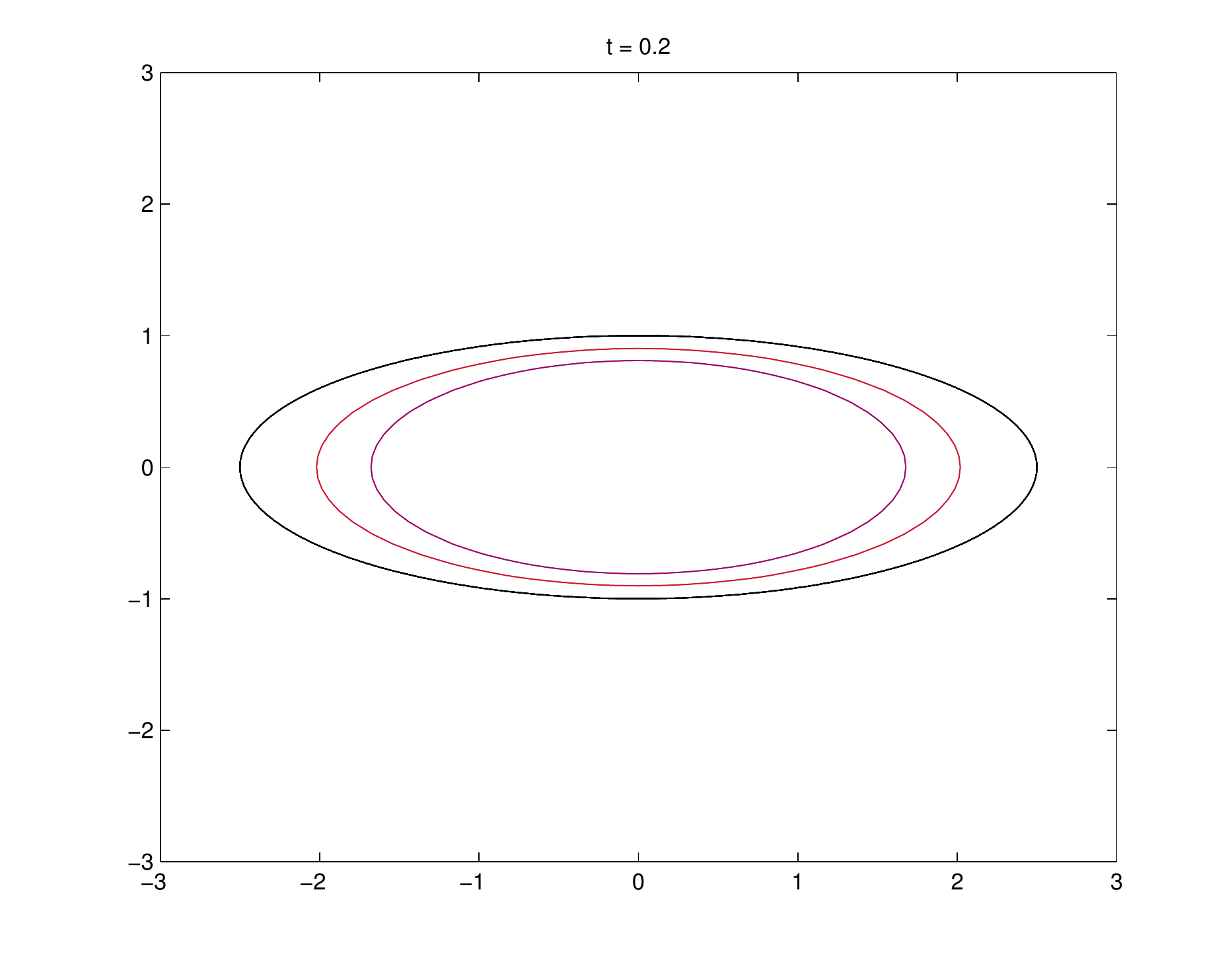}}
\caption{\small Zero level sets of the solution $\phi(\cdot,t)$ for $t = 0.1,0.2$  by minimization/maximization principle in \textbf{Example 2} with $H^{-}$ and $d =7$.}\label{exp_09}
     \end{center}
 \end{figurehere}

\noindent \textbf{Example 3} We solve for the state-dependent Hamiltonian of the form
\begin{equation*}
H^{\pm}(x,p,t) = \pm c(x) | p |_2 \,,
\end{equation*}
where
\begin{equation*}
c(x) = 1 + 3 \exp( - 4 | x - (1,1,0,0...,0 ) |_2^2) \,.
\end{equation*}
The minimization principle \eqref{lax_formula} is used to compute the solution $\varphi$ for both of the $\pm$ cases.
The temporal stepsize is chosen as $\Delta s = 0.02$.  
The other constants are chosen as follows: stepsize $\sigma = 0.001$ and $L= 0.02$.
Figure \ref{exp_1} gives the solutions when $d = 2$ and sign of $H$ is positive, i.e. $H^{+}$, $T= 0.3$. 
The runtime using C++ is $6.865 \times 10^{-4} s \times 5$ per point.
For comparison, Figure \ref{exp_1} (right) is the solution computed by Lax-Friedrichs scheme.
Figure \ref{exp_2} gives the solutions when $d = 2$ and sign of $H$ is negative, i.e. $H^{-}$, $T= 0.5$.
The runtime using C++ is $1.417 \times 10^{-3} s \times 5$ per point.
Figure \ref{exp_2} (right) is the solution computed by Lax-Friedrichs scheme for comparison.
The angles where a discontinuity of derivative is present are computed more accurately using our maximization principle.
Figure \ref{exp_3} gives the solutions when $d = 10$ and sign of $H$ is negative, i.e. $H^{-}$ , $T= 0.4$. 
The runtime using C++ is $ 2.470 \times 10^{-2} s \times 5$ per point.
The computation time is still excellent for a $10$ dimensional problem.

To remark, again the convex case $H^+$ in this example satisfies the assumptions in Lemma \ref{lolXD}, but does not satisfies the assumptions in Lemma \ref{halolXD}, Lemma \ref{hahalolXD} and in Lemma \ref{hahalolXDsayonara}.
Whereas for the nonconvex case $H^-$ in this example does not satisfy the assumptions of any of the lemmas proved in this work.

\begin{figurehere}
     \begin{center}
     \vskip -0.3truecm
   \scalebox{0.4}{\includegraphics{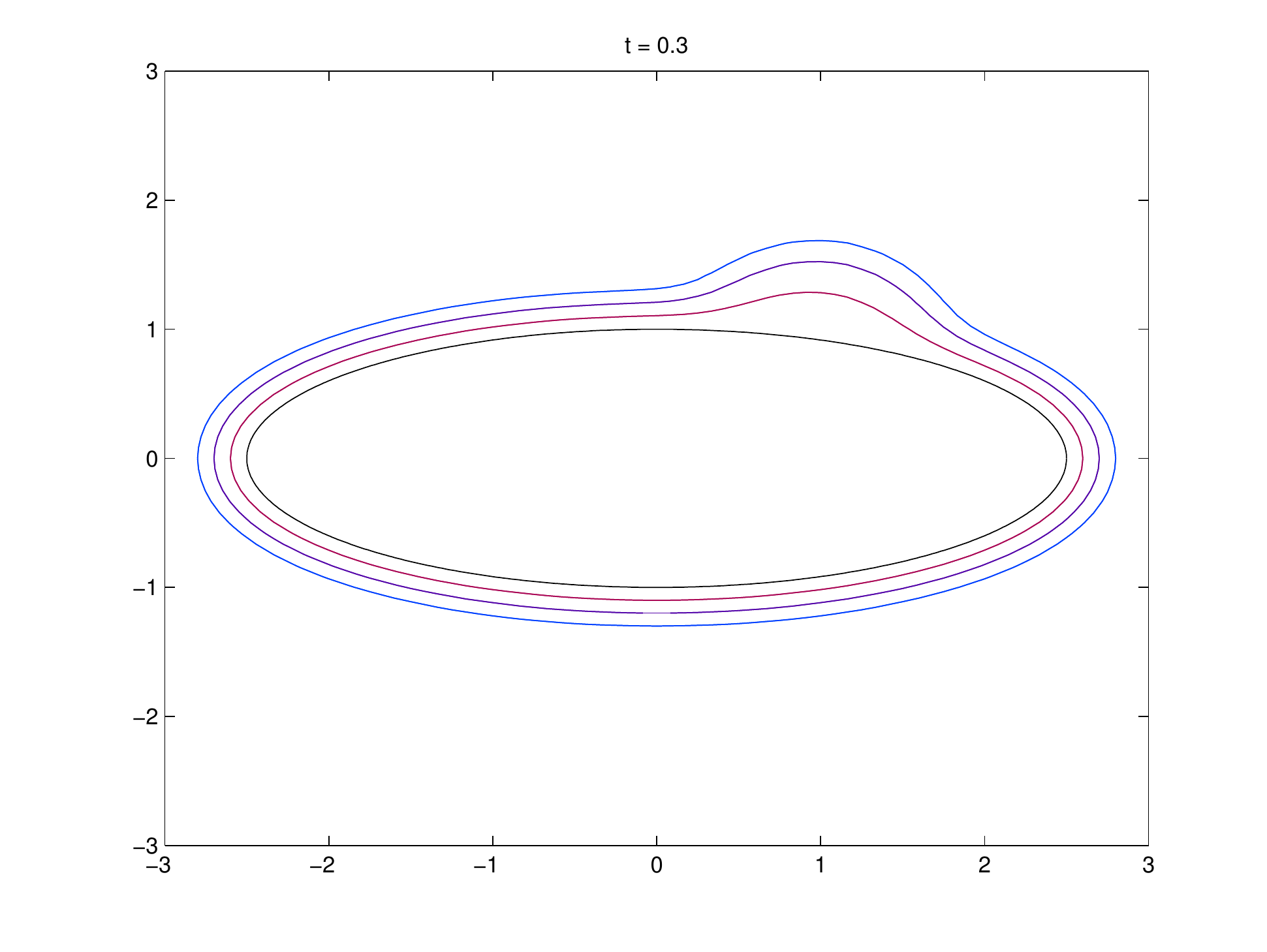}}
           \scalebox{0.4}{\includegraphics{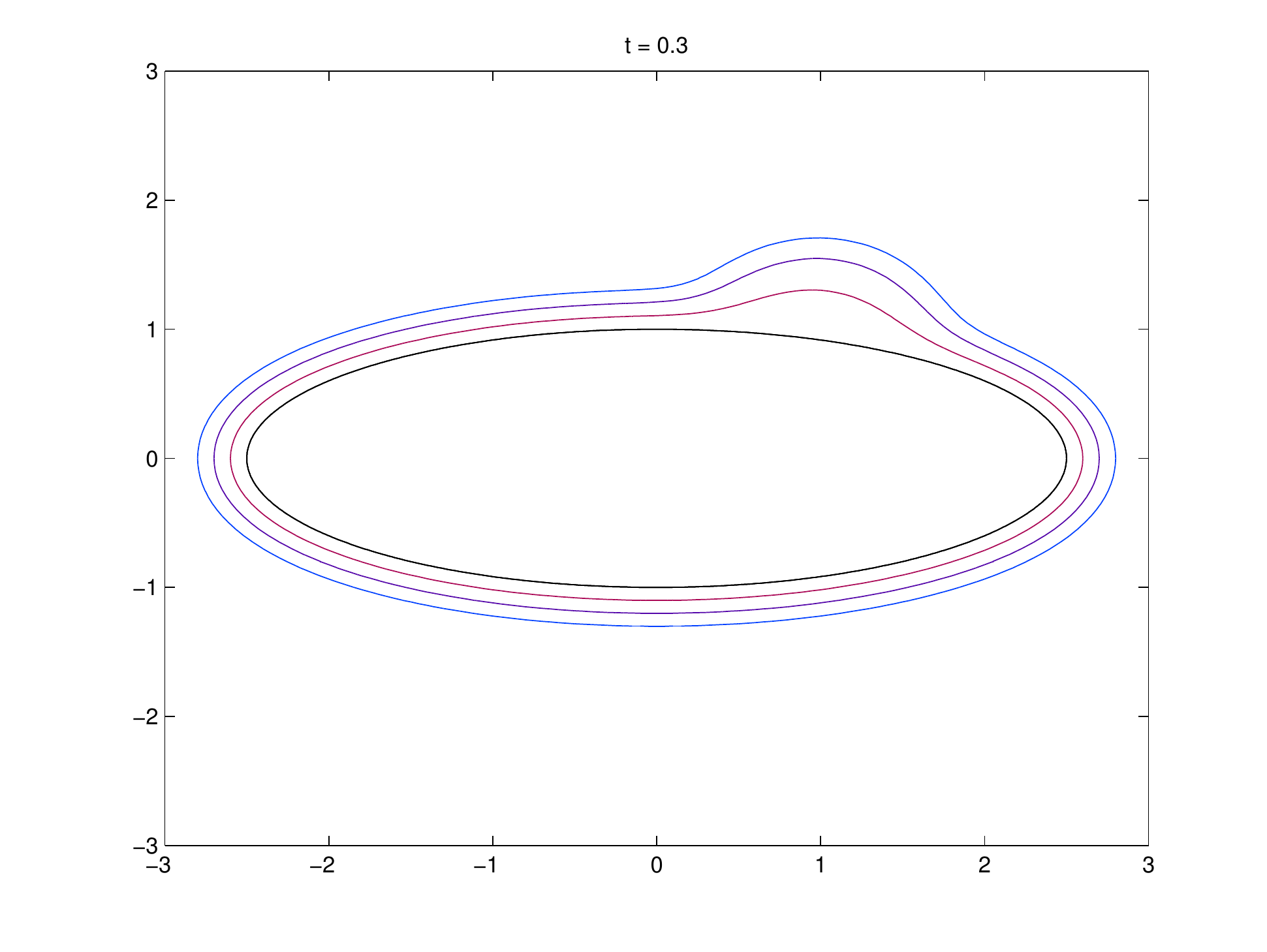}} \\
\scalebox{0.4}{\includegraphics{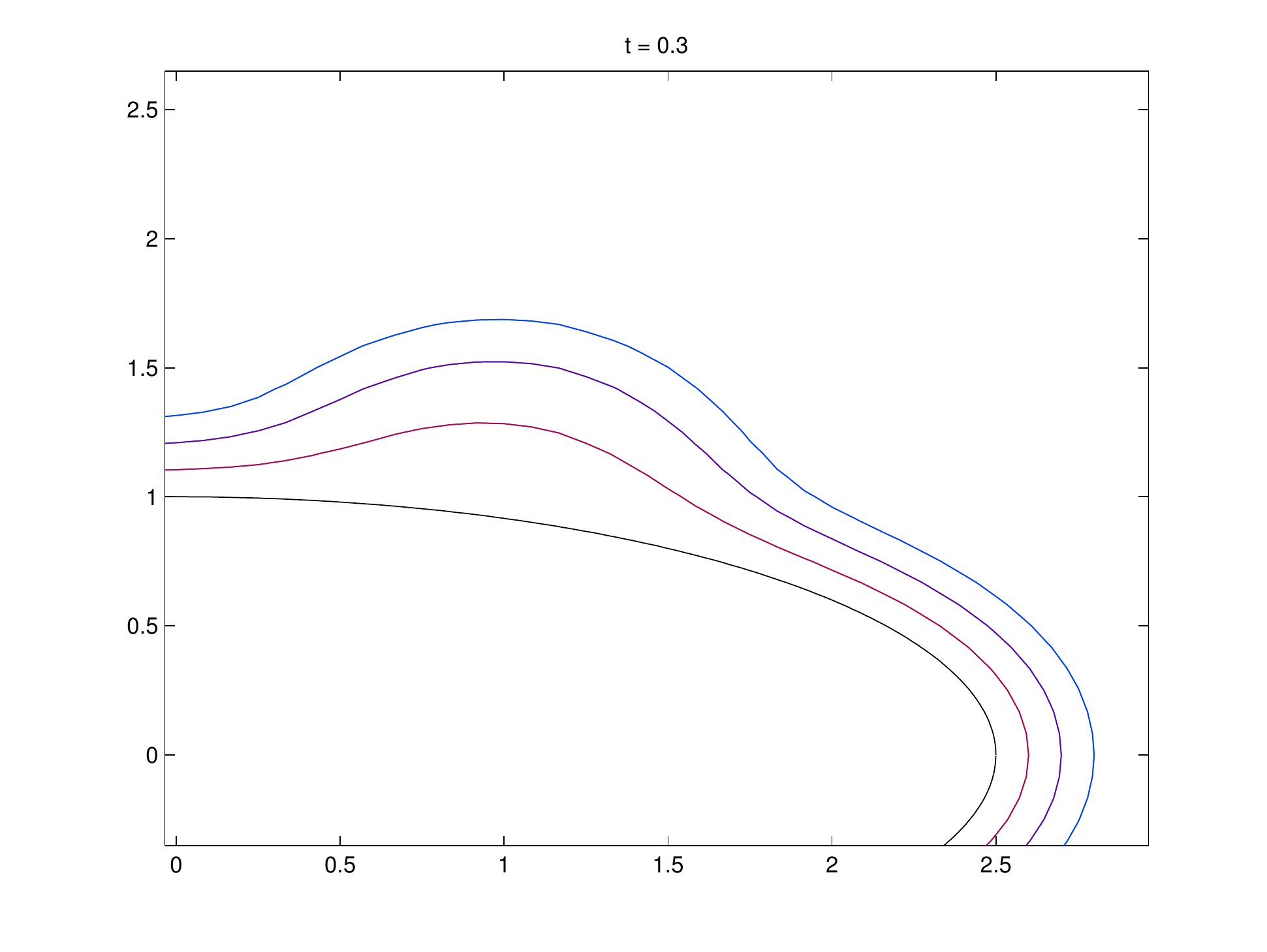}}
           \scalebox{0.4}{\includegraphics{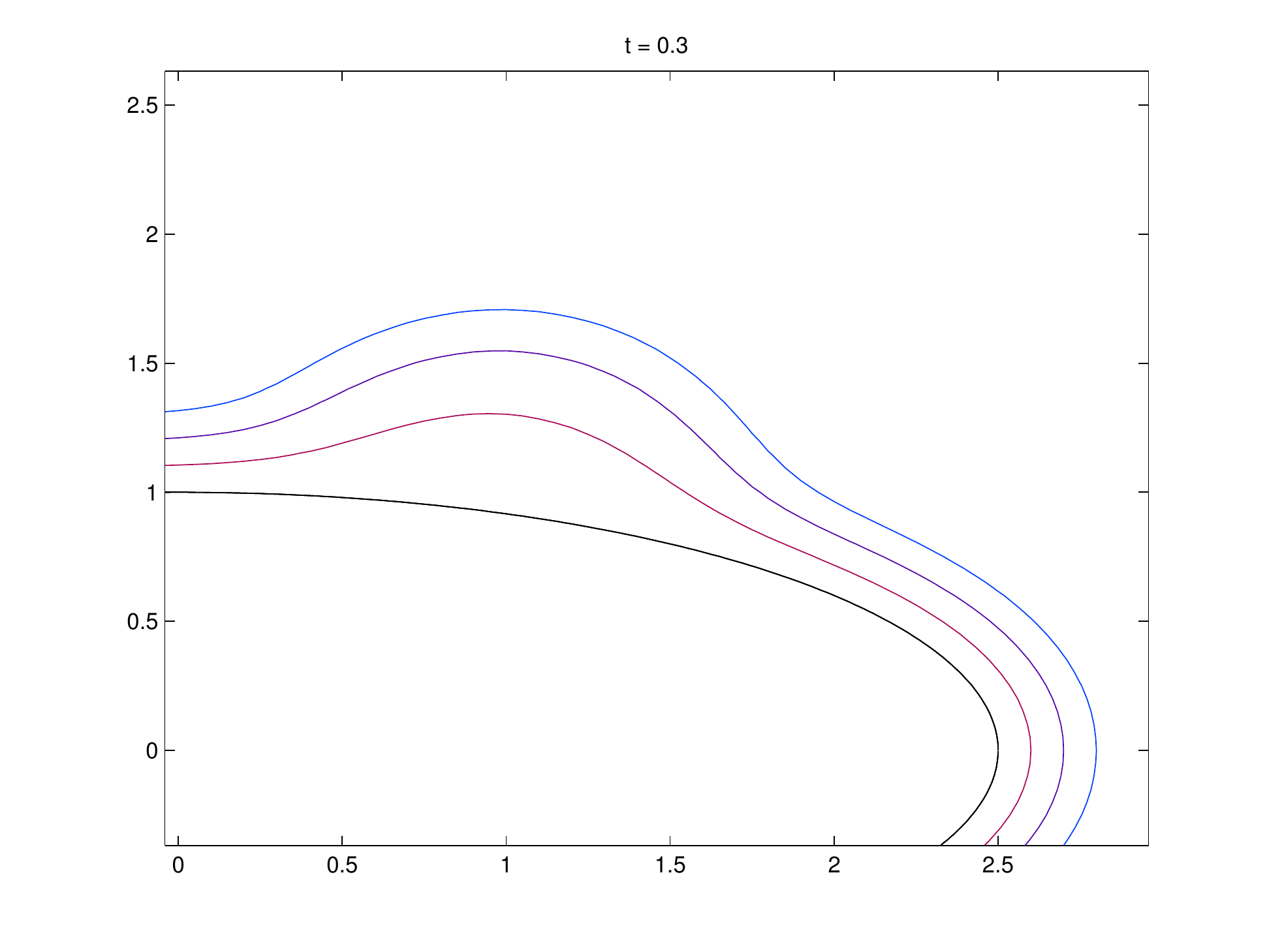}} \\
\caption{\small Zero level sets of the solution $\phi(\cdot,t)$ for $t = 0.1,0.2,0.3$  in \textbf{Example 3} with $H^{+}$ and $d =2$; left: minimization/maximization principle, right: Lax-Friedrichs.}\label{exp_1}
     \end{center}
 \end{figurehere}

\begin{figurehere}
     \begin{center}
     \vskip -0.3truecm
   \scalebox{0.4}{\includegraphics{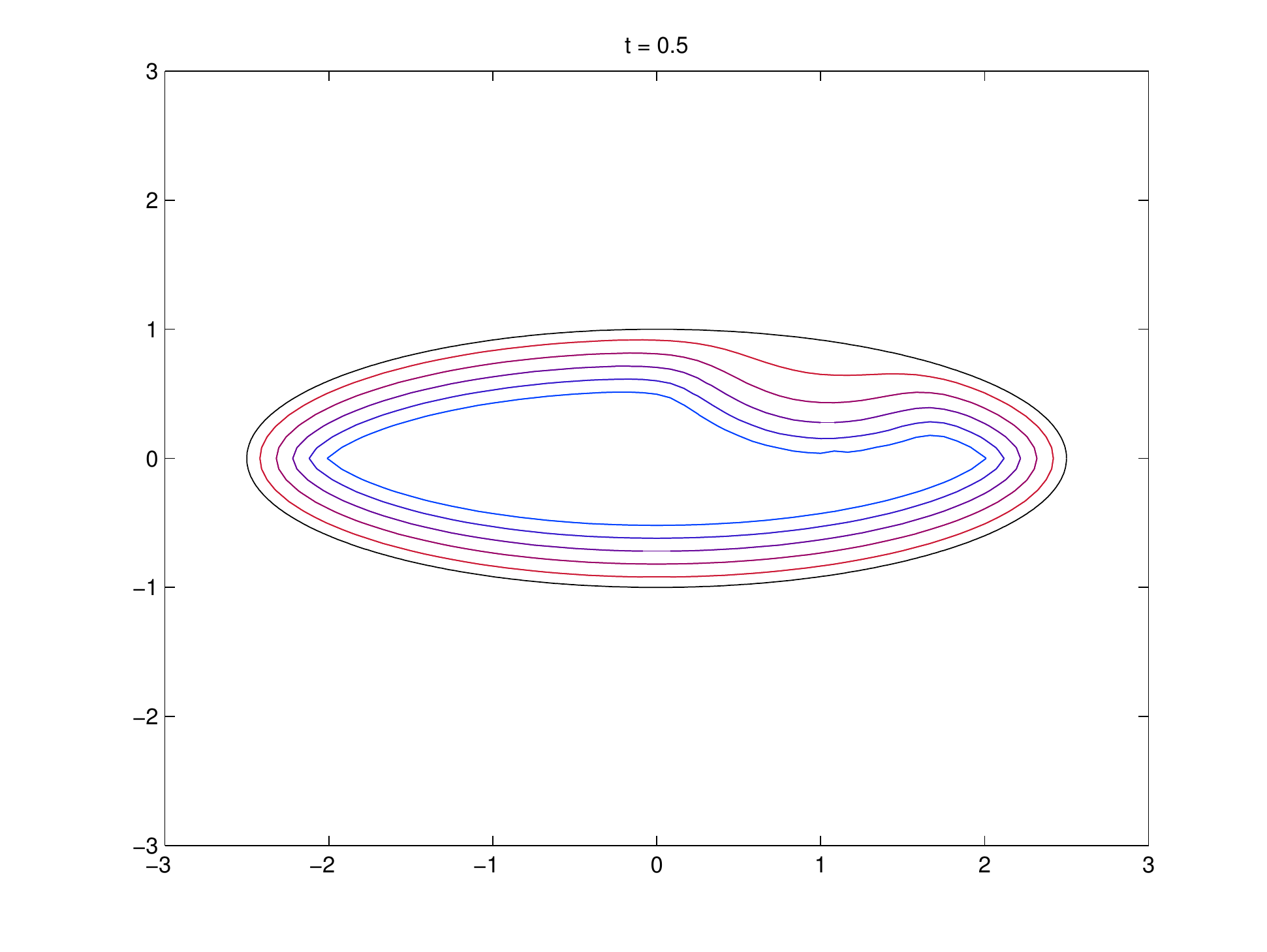}}
           \scalebox{0.4}{\includegraphics{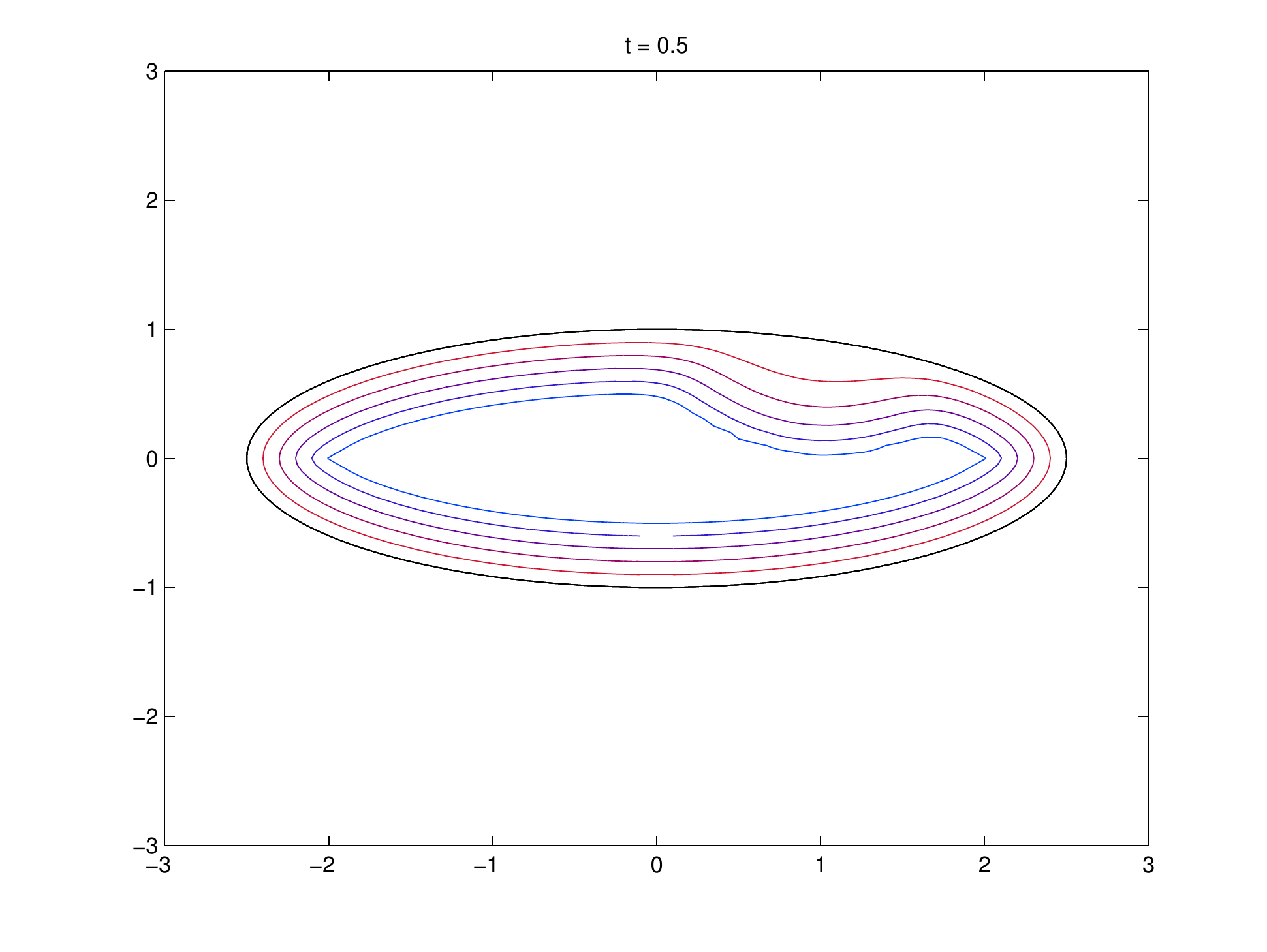}} \\
   \scalebox{0.4}{\includegraphics{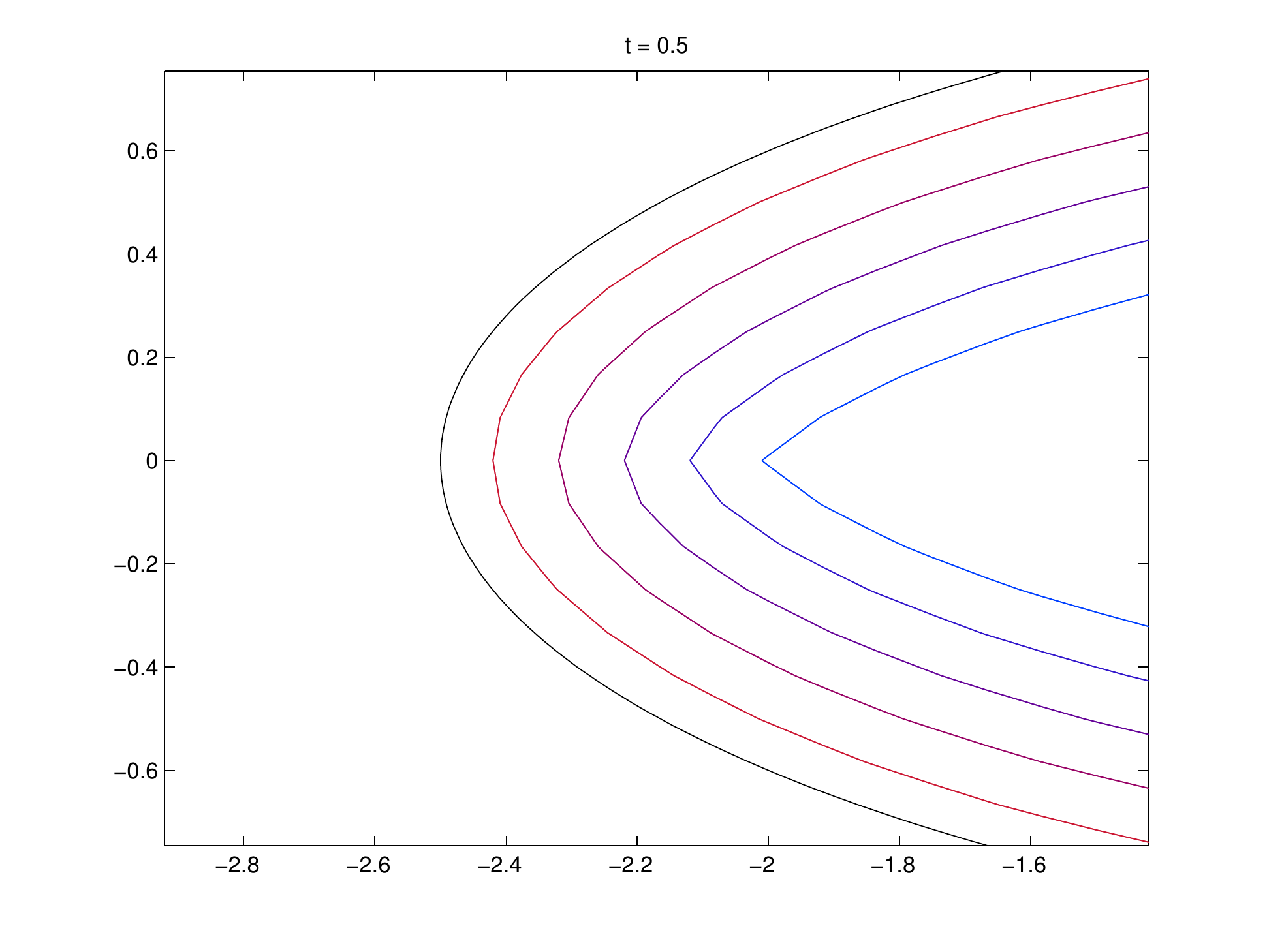}}
           \scalebox{0.4}{\includegraphics{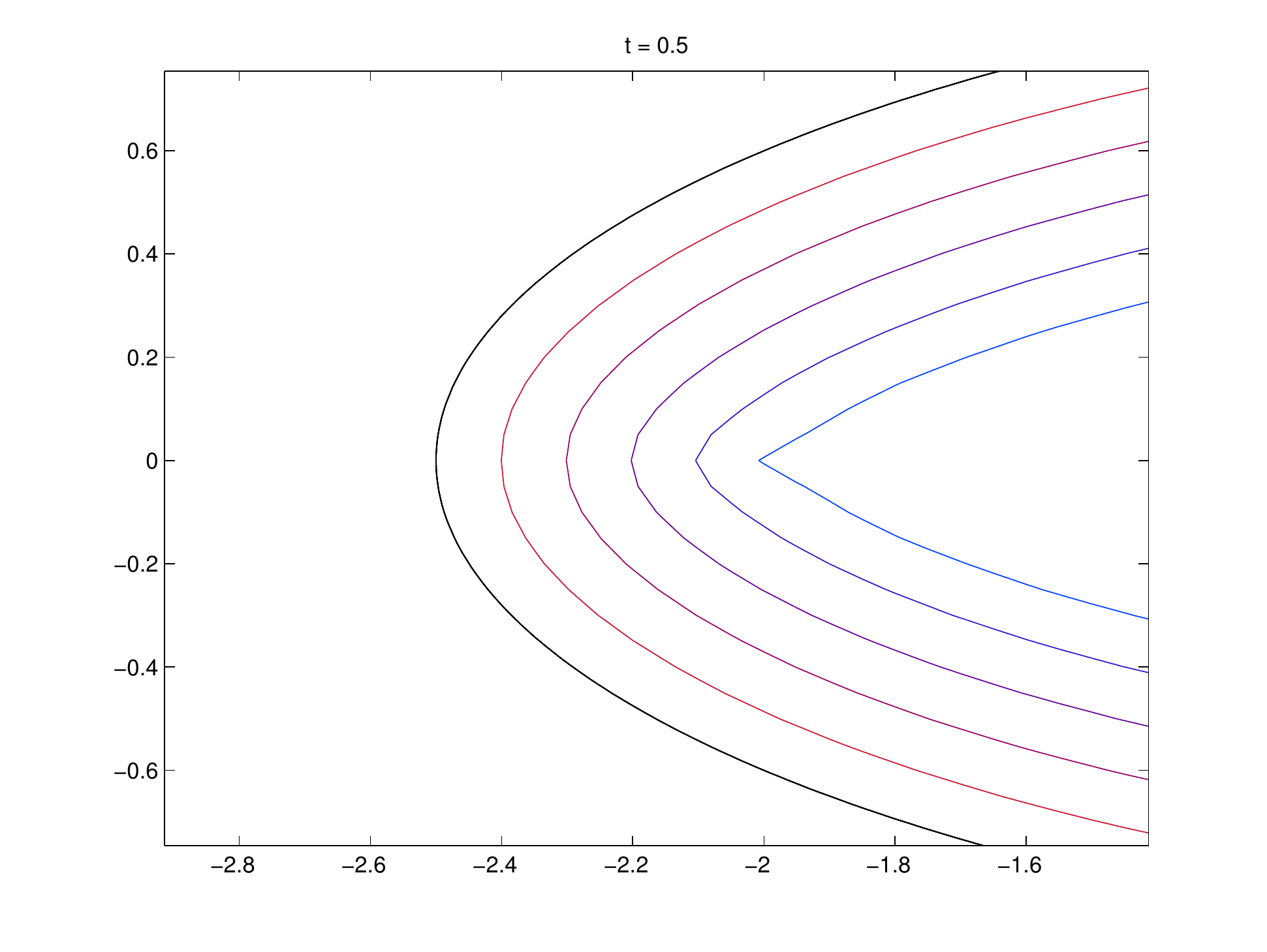}} \\
   \scalebox{0.4}{\includegraphics{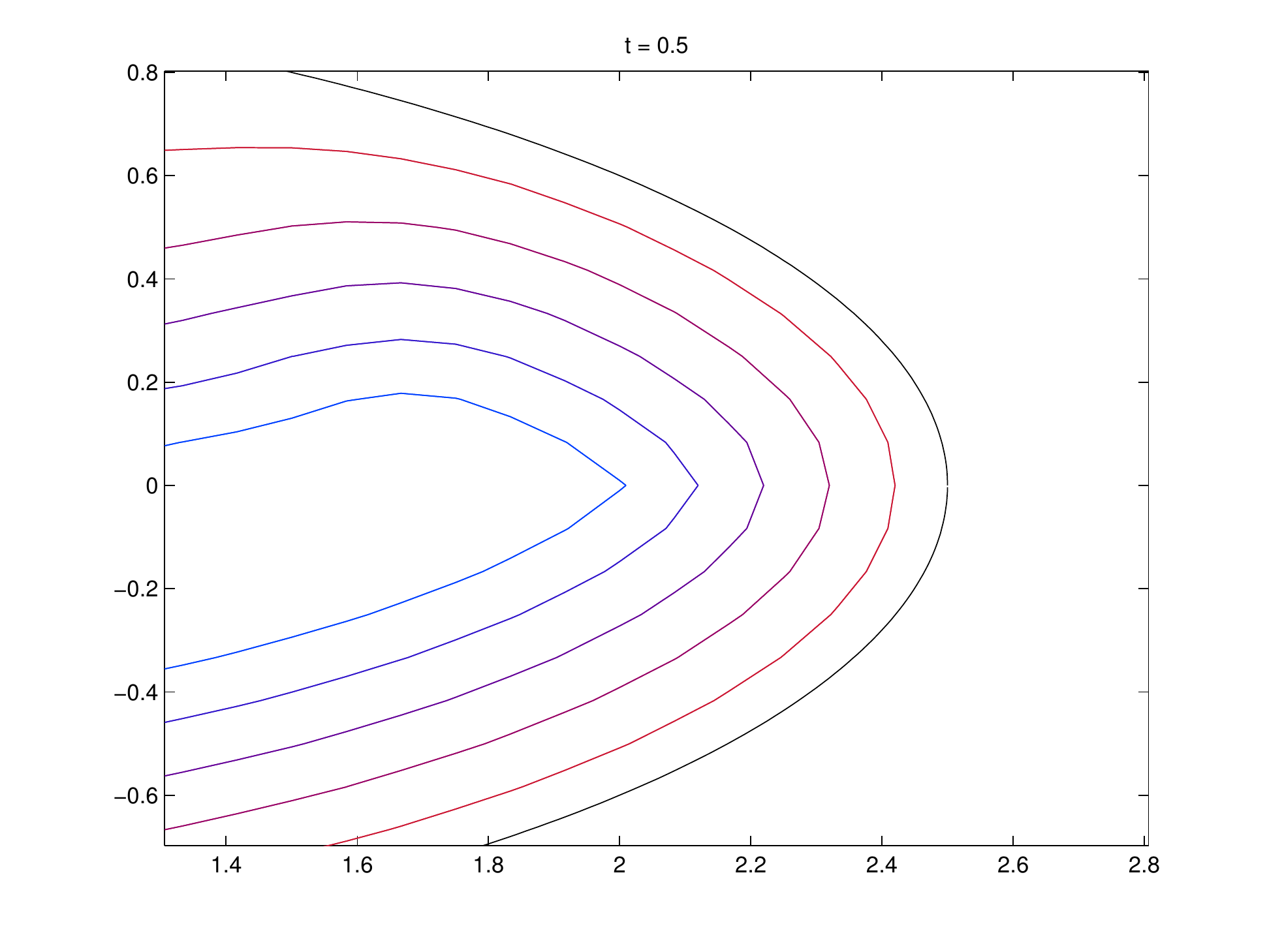}}
           \scalebox{0.4}{\includegraphics{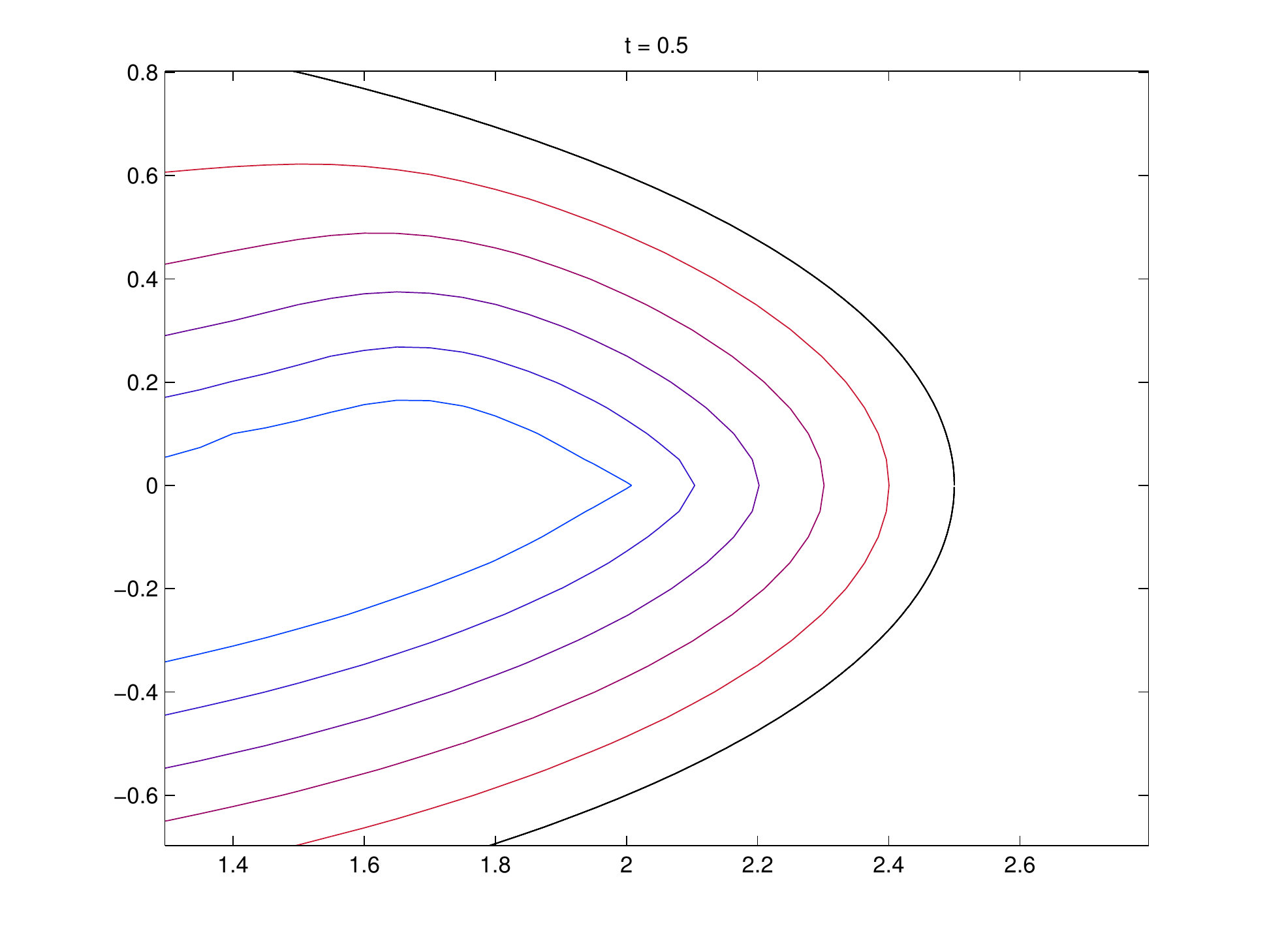}} \\
\caption{\small Zero level sets of the solution $\phi(\cdot,t)$ for $t = 0.1,0.2,...,0.5$  in \textbf{Example 3} with $H^{-}$ and $d =2$; left: minimization/maximization principle, right: Lax-Friedrichs.}\label{exp_2}
     \end{center}
 \end{figurehere}

\begin{figurehere}
     \begin{center}
     \vskip -0.3truecm
   \scalebox{0.4}{\includegraphics{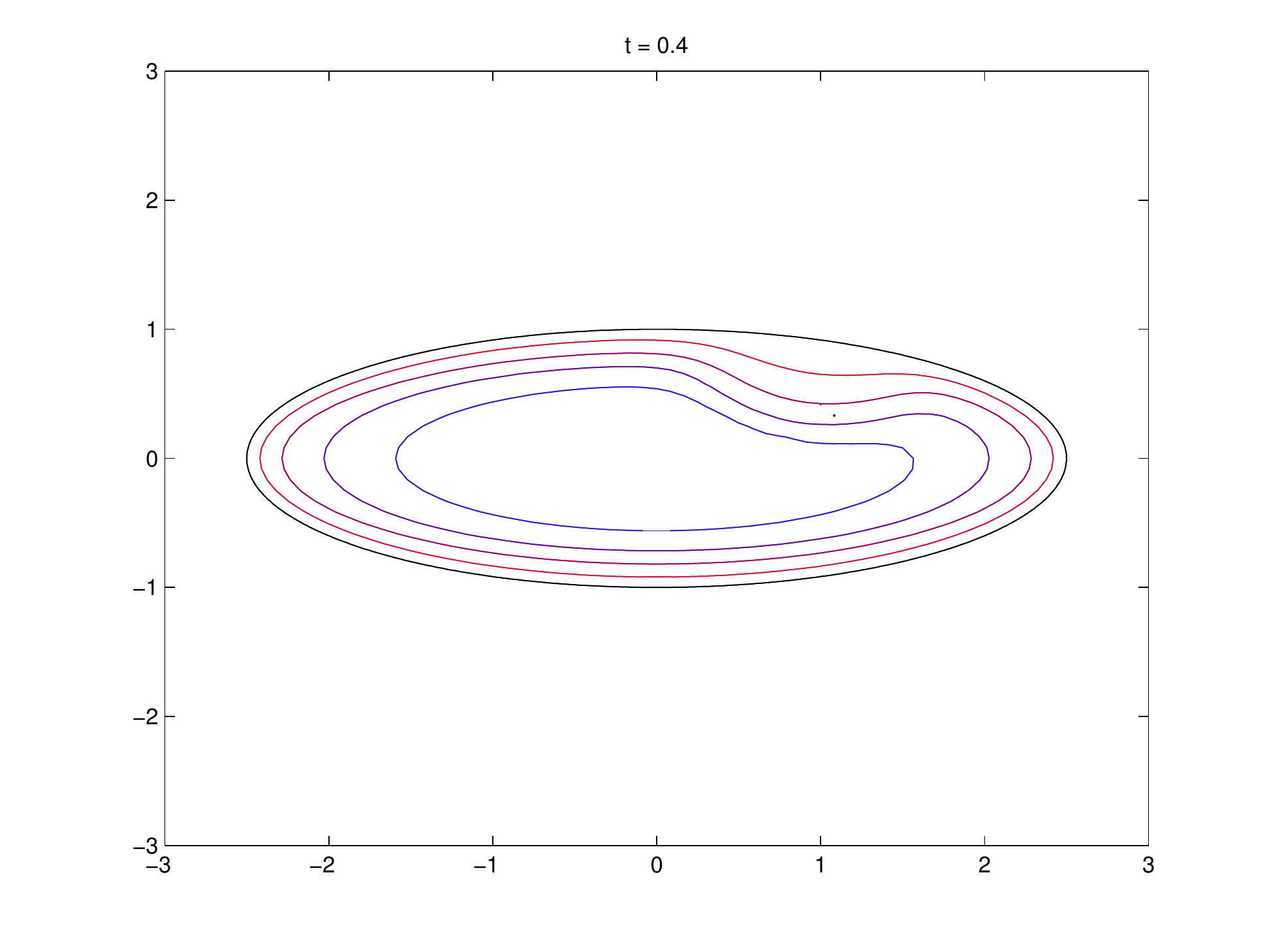}} \\
   \scalebox{0.4}{\includegraphics{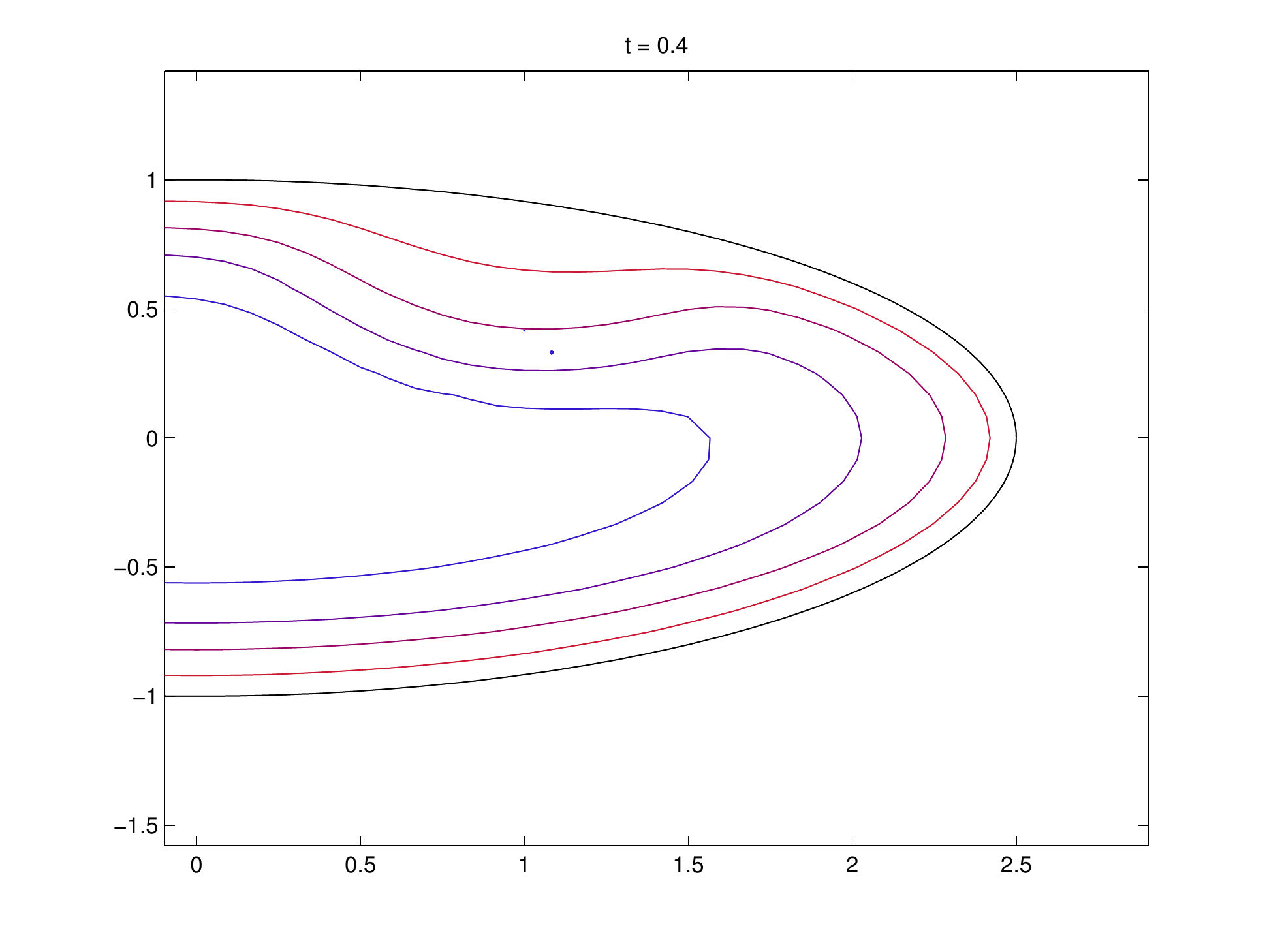}} \\
 \scalebox{0.4}{\includegraphics{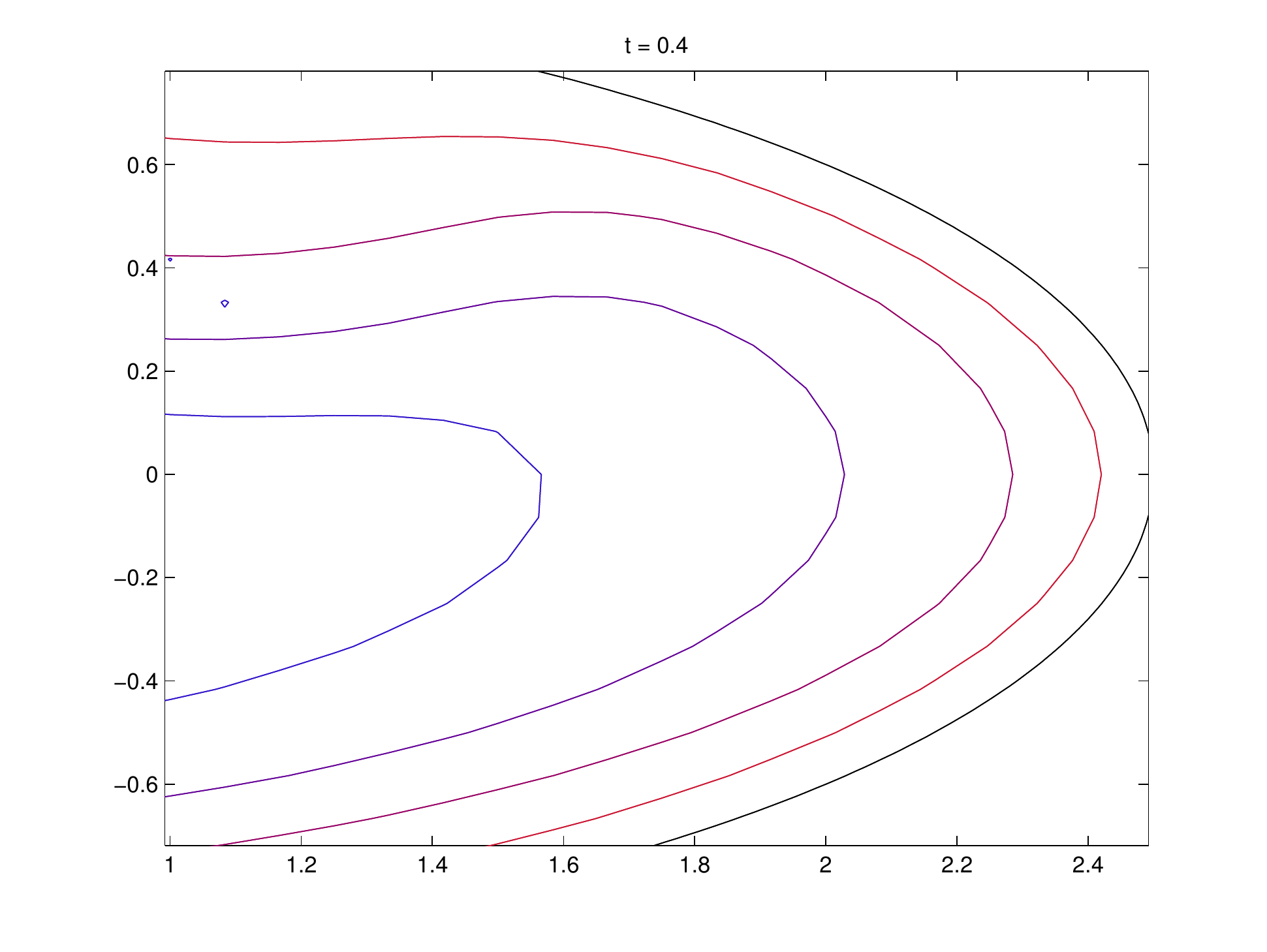}} \\
\caption{\small Zero level sets of the solution $\phi(\cdot,t)$ for $t = 0.1,0.2,...,0.4$  by minimization/maximization principle in \textbf{Example 3} with $H^{-}$ and $d =10$; top: full-size; middle and bottom: close-up.}\label{exp_3}
     \end{center}
 \end{figurehere}

In order to compare convergence of the method with respect to different discretization parameters, we compute the solutions with different $\Delta s$ and $\sigma$ to compare our result.  We choose $H^+$ and compare our solutions computed at the point $x = ( -0.93 , -0.35 ), t = 0.3$ with that of using the finest set of parameter $\Delta s = 0.005 $ and $\sigma= 0.01 $.
We first fix $\Delta s = 0.005 $ and compute the values of the function $\phi(x,t)$ using parameter different $\sigma = 0.02, 0.03,..,0.06$. The convergence table of the value with respect to the discretization parameter is shown Table \ref{convergencetable1}.  
In the comparison of this discretization parameter $\sigma$, the error does not show a clear trend of diminishing and shows a bit of oscillatory behaviour, although it is overall small as $\Delta s$ goes to $0$.
We then fix $\sigma = 0.01 $  and compute the values of the function $\phi(x,t)$ using parameter different $\Delta s =  0.01, 0.015,..,0.03 $. The convergence table of the value with respect to the discretization parameter is shown Table \ref{convergencetable2}.
In the comparison of this discretization parameter $\Delta s$, the error shows a clear trend of converging as goes to $0$.
In order to show convergence rate, as an example, Figure \ref{convergence} shows the convergence of the algorithm $\Delta s = 0.01 $ and $\sigma= 0.005$. As we may observe from the figure, the convergence is sublinear.

\begin{center}
\begin{tablehere}
\begin{tabular}{c|c}
$\sigma$ & Error \\
\hline
$ 0.06 $ &  $ 3.539 \times 10^{-4}   $  \\
$ 0.05 $  & $ 2.903 \times 10^{-4}   $ \\
$ 0.04 $ & $ 4.468 \times 10^{-4}   $ \\
$ 0.03 $  & $ 6.185   \times 10^{-4}   $  \\
$ 0.02 $  & $ 1.818 \times 10^{-4}   $
\end{tabular}
\caption{\small Convergence table of the value with respect to $\sigma = 0.02, 0.03,..,0.06$ and $\Delta s = 0.005 $
with $H^+$ in Example 3 at the point $x = ( -0.93 , -0.35 ), t = 0.3$.
.}\label{convergencetable1}
\end{tablehere}
\end{center}

\begin{center}
\begin{tablehere}
\begin{center}
\begin{tabular}{c|c}
$\Delta s$ & Error \\
\hline
$ 0.03 $ &  $ 9.466 \times 10^{-3}   $   \\
$ 0.025 $  & $ 7.804 \times 10^{-3}   $  \\
$ 0.02 $ &$ 6.083 \times 10^{-3}   $  \\
$ 0.015 $  & $ 4.314   \times 10^{-3}   $    \\
$ 0.01 $  &  $ 1.024 \times 10^{-3}   $
\end{tabular}
\end{center}
\caption{\small Convergence table of the value with respect to $\sigma = 0.01 $ and $\Delta s =  0.01, 0.015,..,0.03 $
with $H^+$ in Example 3 at the point $x = ( -0.93 , -0.35 ), t = 0.3$.
.}\label{convergencetable2}
\end{tablehere}
\end{center}

\begin{figurehere}
    \begin{center}
     \vskip -0.3truecm
   \scalebox{0.4}{\includegraphics{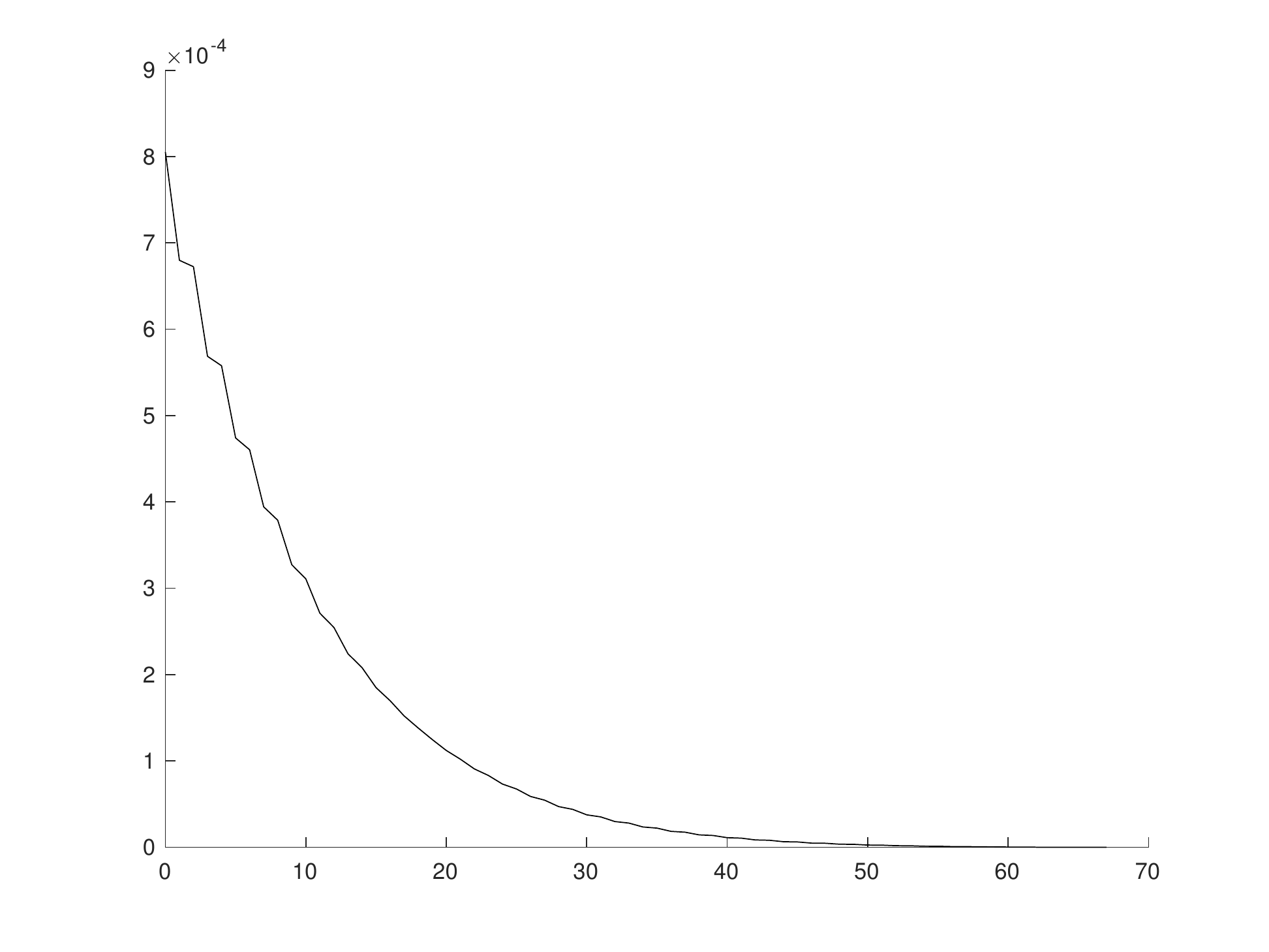}}
\caption{\small Error with respect to the number of iterations with $H^+$ in Example 3 at the point $x = ( -0.93 , -0.35 ), t = 0.3$ when
$\Delta s = 0.01 $ and $\sigma= 0.005$ .}\label{convergence}
     \end{center}
\end{figurehere}

\noindent \textbf{Example 4} To test our maximization principle for a general state-dependent non-convex Hamilton-Jacobi equation, we use a state-dependent non-convex Hamiltonian of the following form given in \cite{evans} but a different problem from \cite{evans}:
\begin{equation*}
H(x,p,t) = - c(x) p_1 + 2|p_2|  - \sqrt{|p_1|^2 + |p_2|^2} -1\,,
\end{equation*}
where we write $p = (p_1, p_2)$ and
\begin{equation*}
c(x) = 2 \left( 1 + 3 \exp( - 4 | x - (1,1 ) |_2^2) \right) \,.
\end{equation*}
The maximization principle \eqref{hopf_formula} is used to compute the solution $\varphi$.
The temporal stepsize is chosen as $\Delta s = 0.005$.  
The other constants are chosen as follows: stepsize $\sigma = 0.001$ and $L= 4$.
Figure \ref{exp_5} gives the solutions with $T= 0.1$. 
The runtime using C++ is $ 7.279 \times 10^{-2} s \times 5$ per point.
Figure \ref{exp_5} (right) is the solution computed by the Lax-Friedrichs scheme for comparison.
Thes example is to illustrate that the maximization principle coincide with the Lax-Friedrichs solution in a quite general case.
To remark, for this case, the example does not satisfy the assumptions of any of the lemmas proved in this work.

\begin{figurehere}
    \begin{center}
     \vskip -0.3truecm
   \scalebox{0.4}{\includegraphics{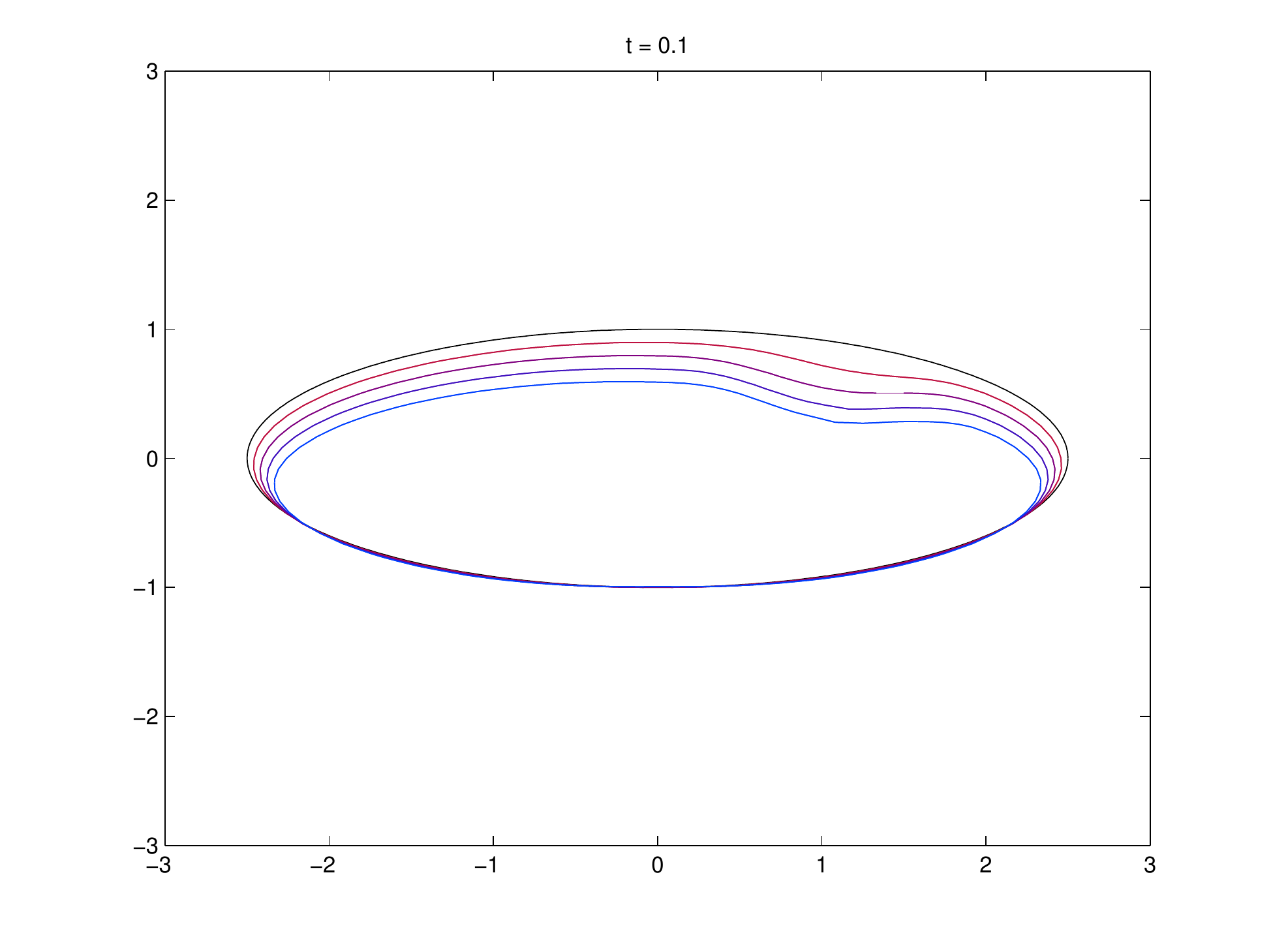}}
           \scalebox{0.4}{\includegraphics{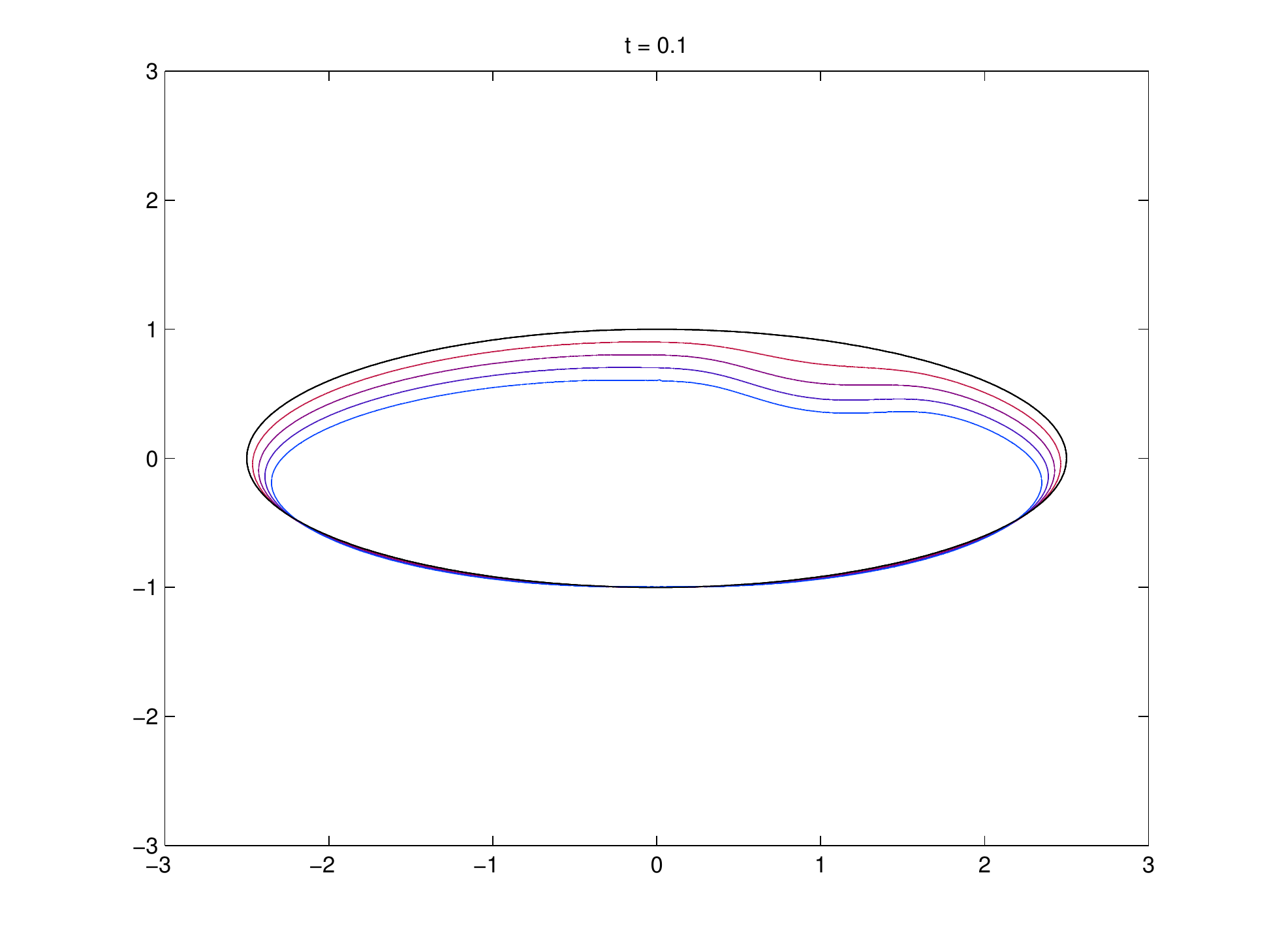}} \\
  \scalebox{0.4}{\includegraphics{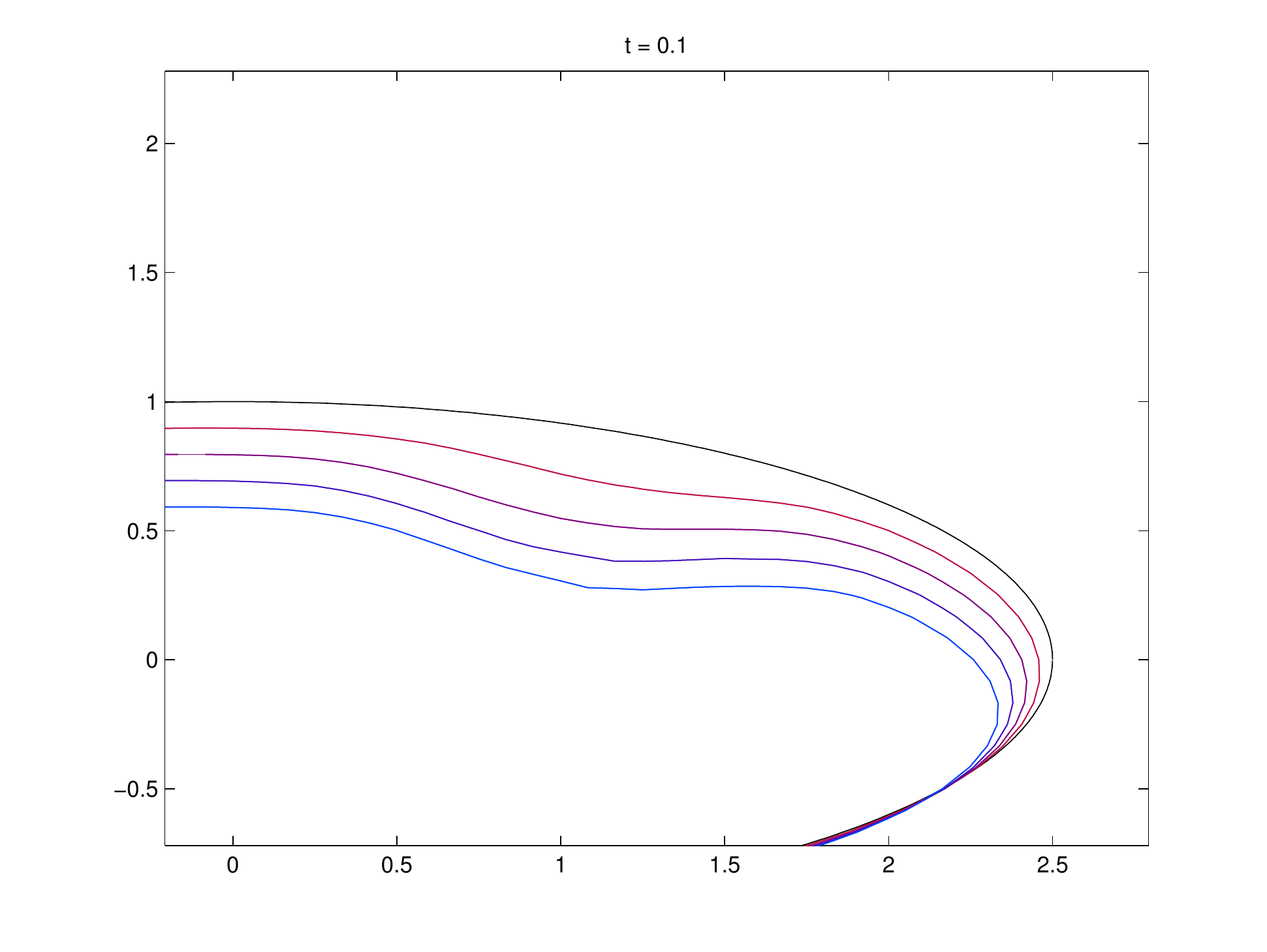}}
           \scalebox{0.4}{\includegraphics{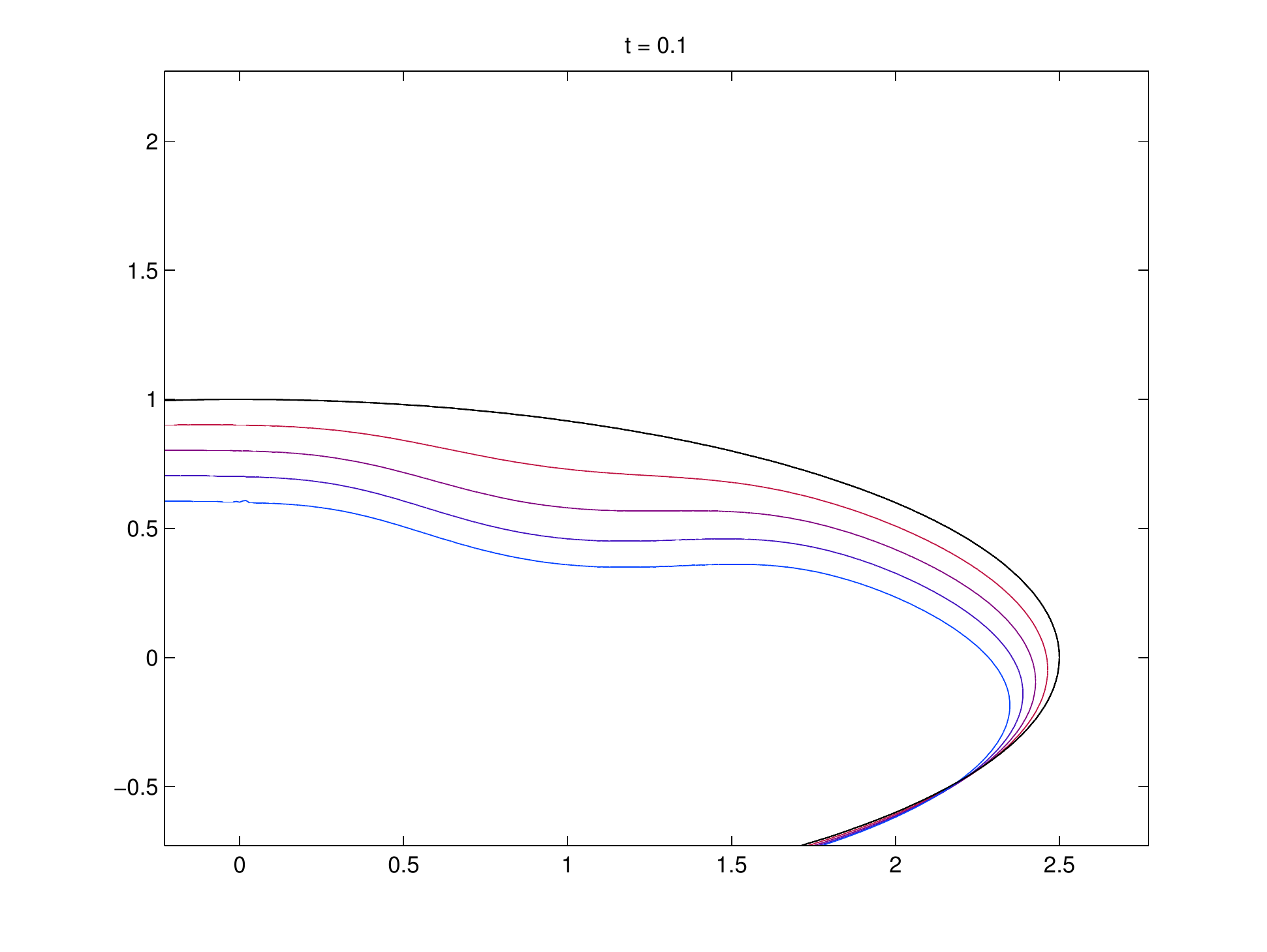}} \\
  \scalebox{0.4}{\includegraphics{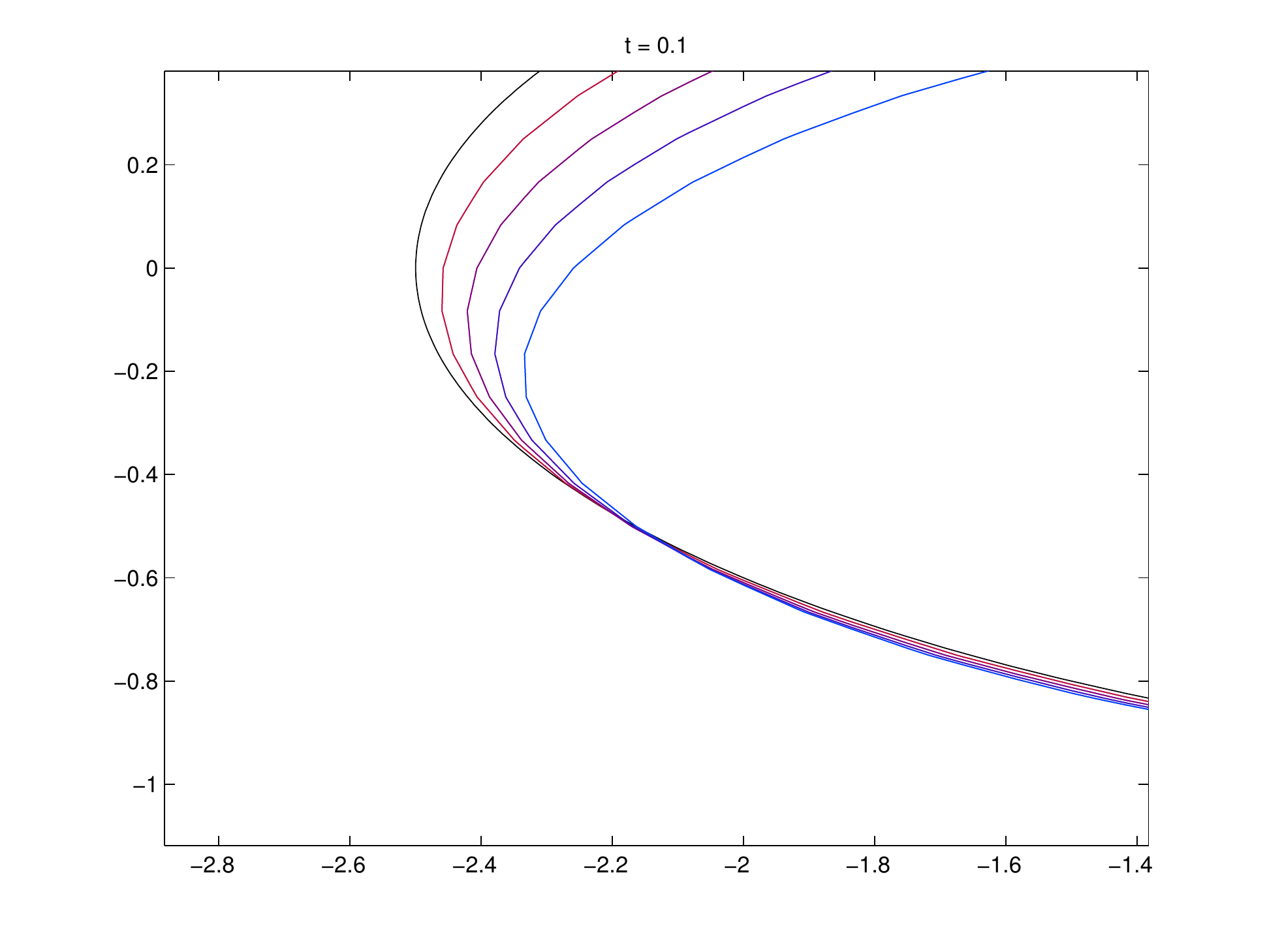}}
           \scalebox{0.4}{\includegraphics{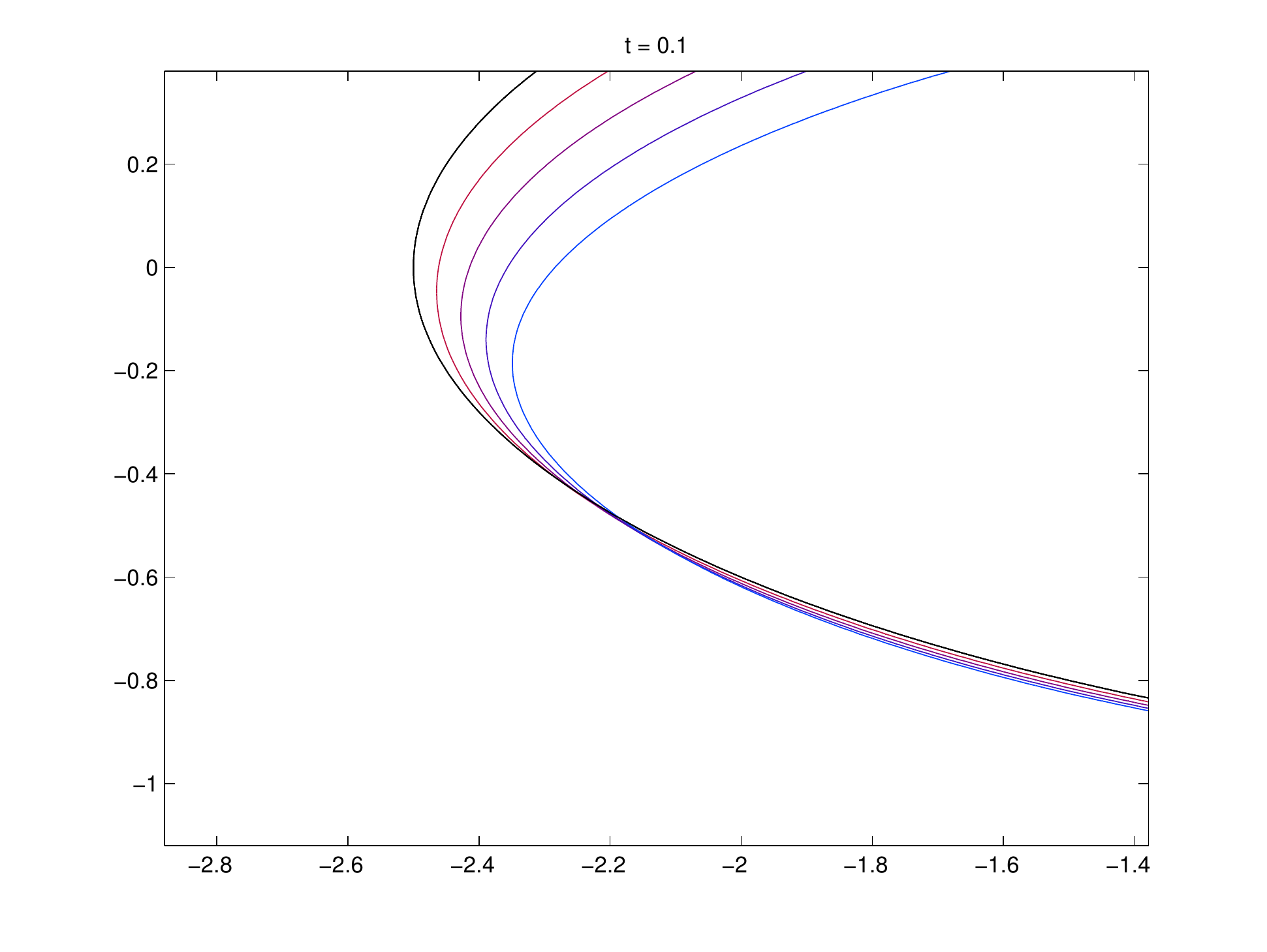}} \\
\caption{\small Zero level sets of the solution $\phi(\cdot,t)$ for $t = 0.025,0.05,...,0.1$  in \textbf{Example 4} with $d =2$; left: minimization/maximization principle, right: Lax-Friedrichs.}\label{exp_5}
     \end{center}
\end{figurehere}

\noindent \textbf{Example 5} We solve for the state-dependent non-convex Hamiltonian of the form
\begin{equation*}
H(x,p,t) = c_1(x) | p_{1,...,k} |_2 - c_2(x) | p_{k+1,...,d} |_2\,,
\end{equation*}
where
\begin{equation*}
c_1(x) = c(x)\,, \quad  c_2(x) = c(-x) \,, \quad c(x) = 2 \left( 1 + 3 \exp( - 4 | x - (1,1 ) |_2^2) \right) \,.
\end{equation*}
In this case
The maximization principle \eqref{hopf_formula} is used to compute the solution $\varphi$.
The constants are chosen as follows: the temporal stepsize $\Delta s = 0.02$, stepsize in numerical differentiation $\sigma = 0.001$ and $L= 50$.

Figure \ref{exp_4} gives the solutions when $d = 2$ and $k=1$, $T= 0.3$. 
The runtime using C++ is $ 9.094  \times 10^{-2} s \times 20$ per point.
To compare, Figure \ref{exp_4} (right) is the solution computed by Lax-Friedrichs scheme for comparison.  Now there is small defect in the solution computed by the maximization principle at one point of the wave-front close to $x = (-1,-0.4)$, owing to the high non-convexity and non-smoothness of the corresponding functionals around that point.

In order to pin down the exact problem causing the defect, let us fix $ x = (-0.93,-0.35)$ and $t = 0.3$
Figure \ref{exp_4def} (left) shows the functional to be minimizied.  The black star is the global minimizer.  It is clear that we now have a local minimizer (attractor) which has a comparable size of basin of attraction as the global minimizer.  Moreover consider Figure \ref{exp_4def} (right), which shows the norm of the gradient for the functional to be minimized, we can clearly see that around the two basins of attractions, there is a long V-shape trench with gradient of very small magnitude.  With this kind of structure, it is very likely that a gradient type method either get stuck at a local minimum (as one may see, there is almost at least $0.5$ probability that one may fall into the basin of attraction of the local minimum); or it will drag very slowly along the trench and is very difficult to leave it before converging to a global minimum.
In fact, this truly happen quite often.  As shown in Figure \ref{exp_4def} (left) the  red stars shows the path of an iteration of coordinate descent with a random initial guess and green star is the final iterate.  It is clear that the iterates drag along the trench and fall into the attraction basin of a local minimum. 
In fact, this structure of the functional make descent-type algorithms less easy to find the global minimum, unless better optimization methods are considered, e.g. adding momentum \cite{momentum2,momentum3,momentum1}.
Figure \ref{exp_45} gives the solutions when $d = 7$ and $k=1$, $T= 0.3$. 
The runtime using C++ is $ 5.428 \times 10^{-1} s \times 20$ per point.
The computation time is still acceptable for a $7$ dimensional problem which is fully non-convex and state-dependent.
To remark, again, this example does not satisfy the assumptions of any of the lemmas proved in this work.

\begin{figurehere}
     \begin{center}
     \vskip -0.3truecm
   \scalebox{0.4}{\includegraphics{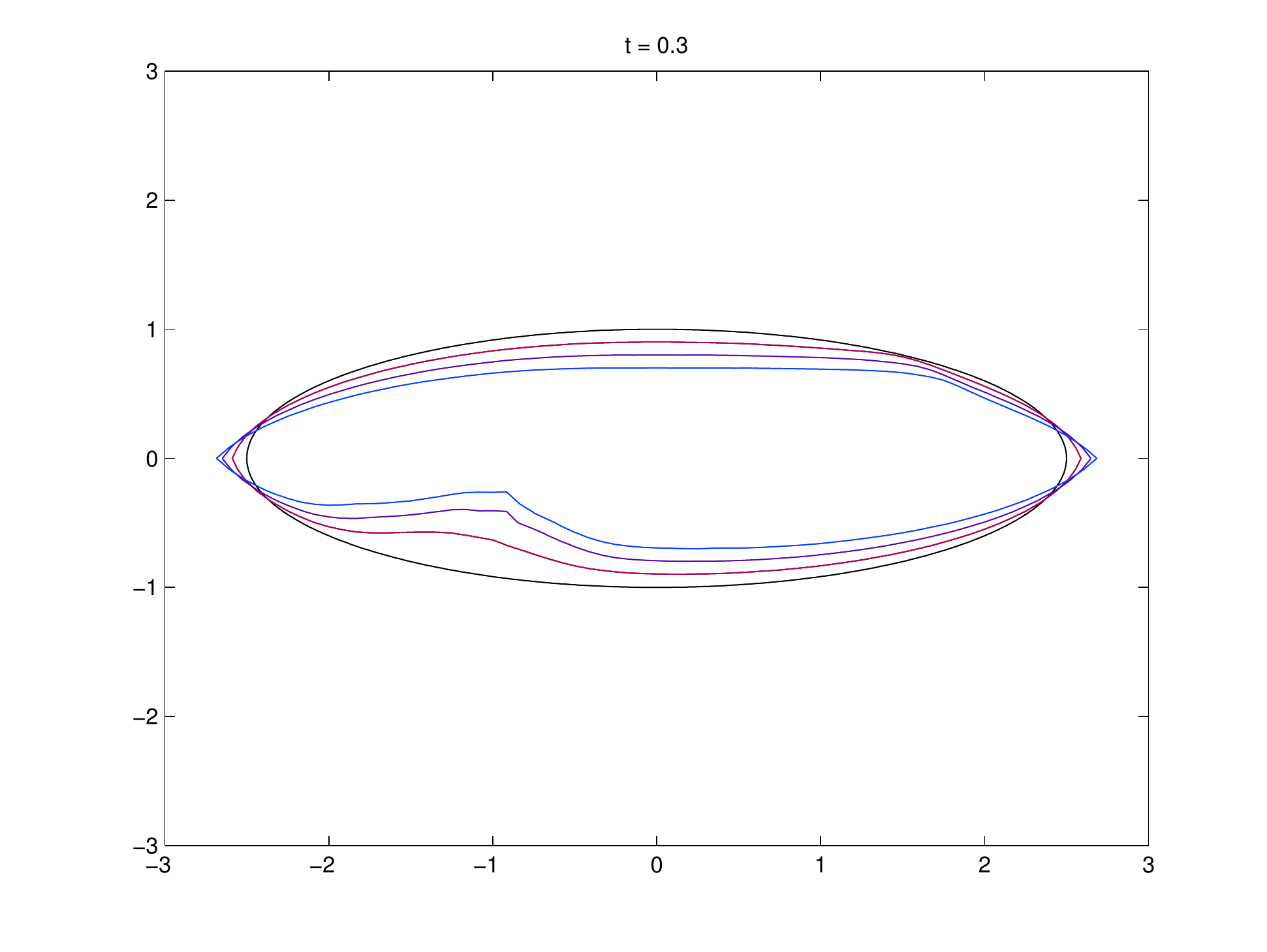}}
           \scalebox{0.4}{\includegraphics{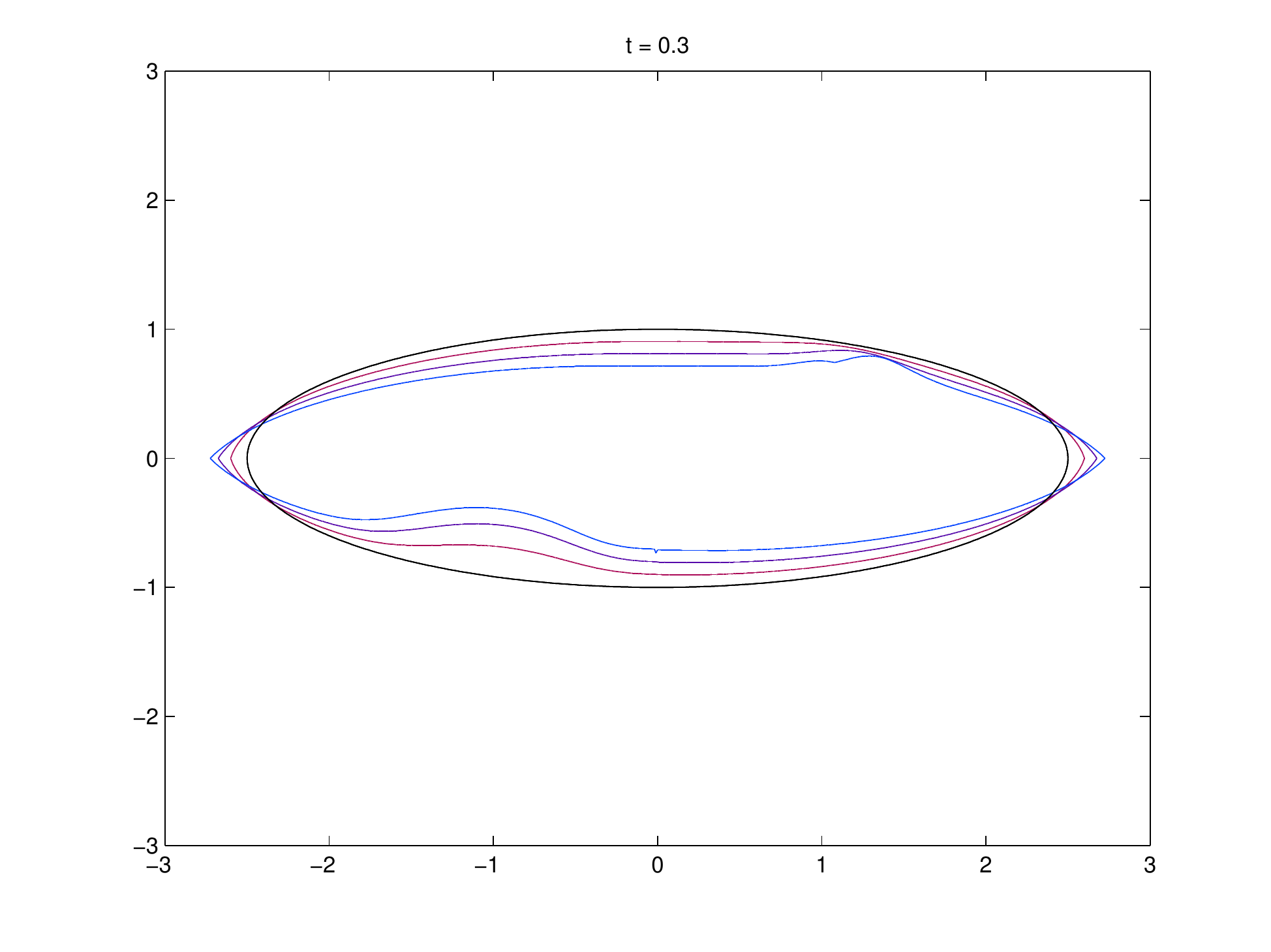}} \\
   \scalebox{0.4}{\includegraphics{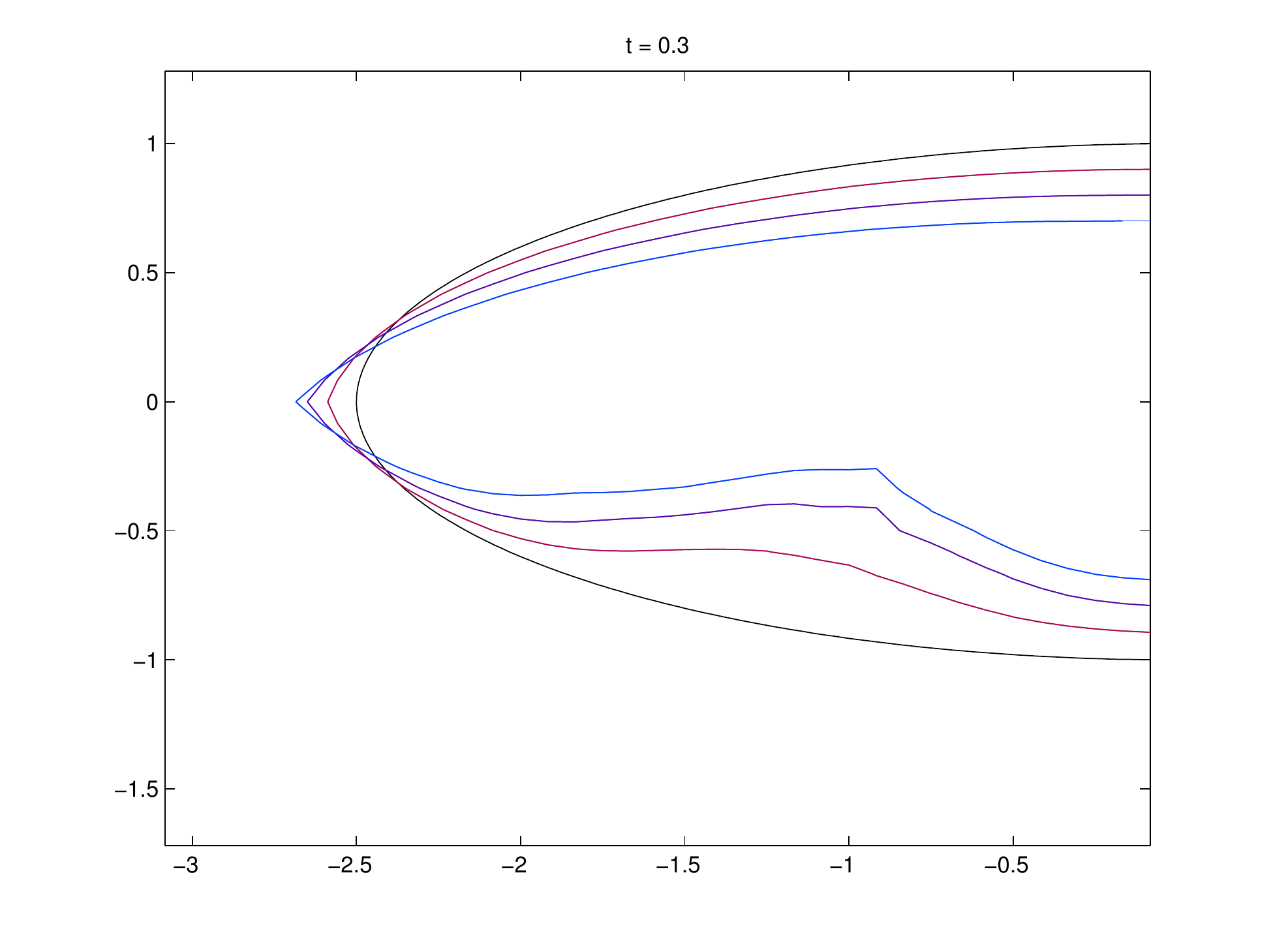}}
           \scalebox{0.4}{\includegraphics{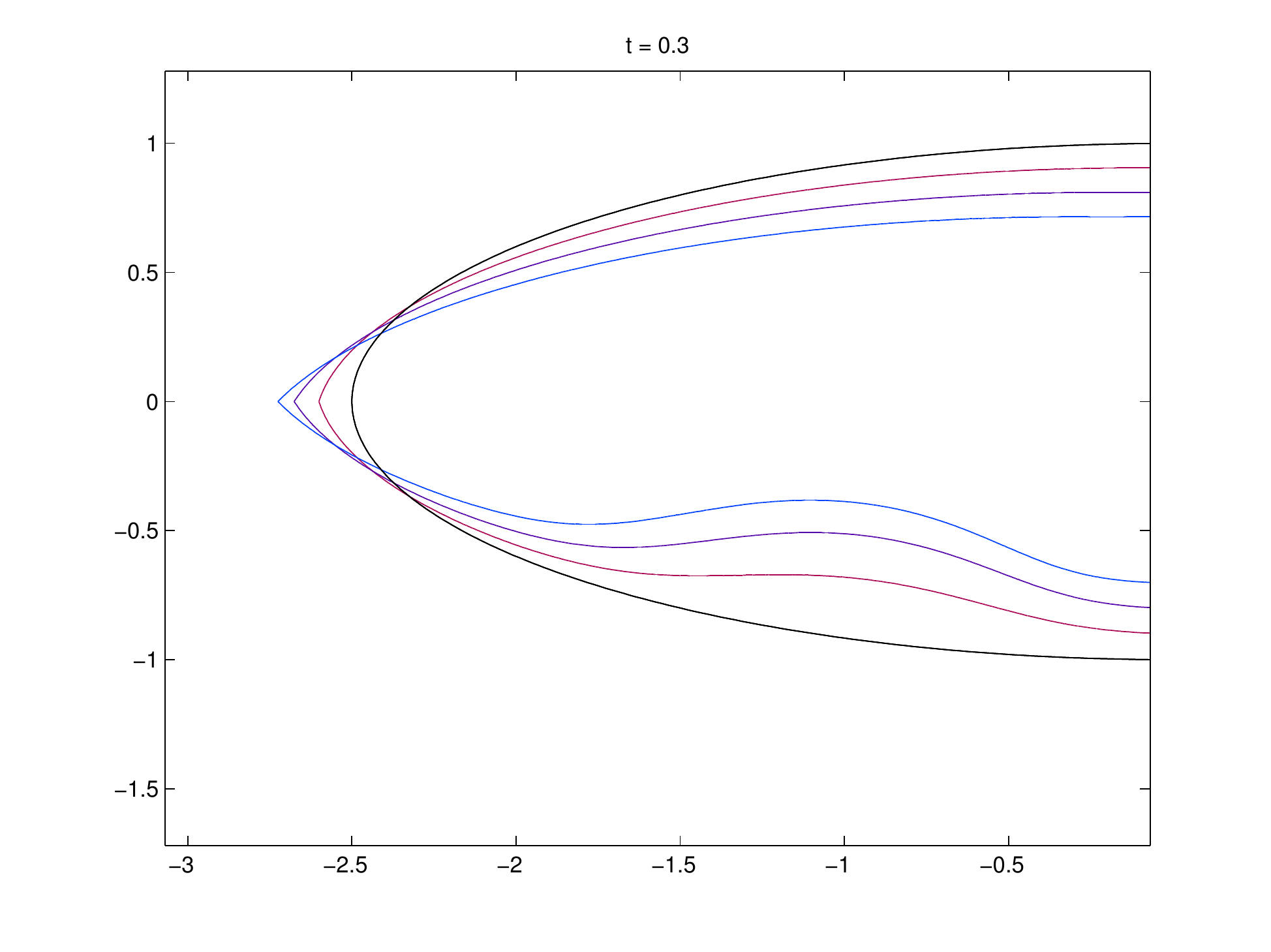}} \\
   \scalebox{0.4}{\includegraphics{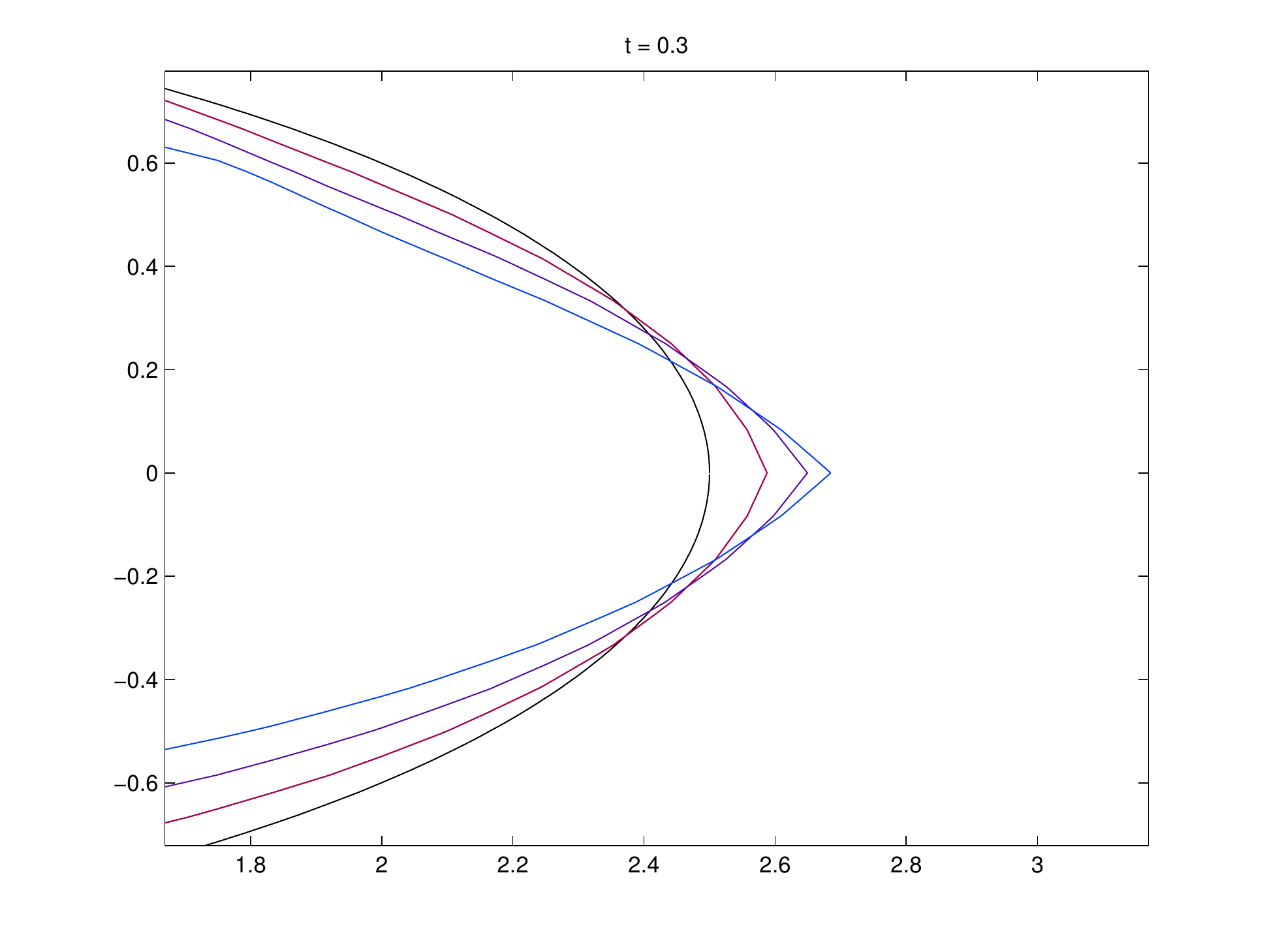}}
           \scalebox{0.4}{\includegraphics{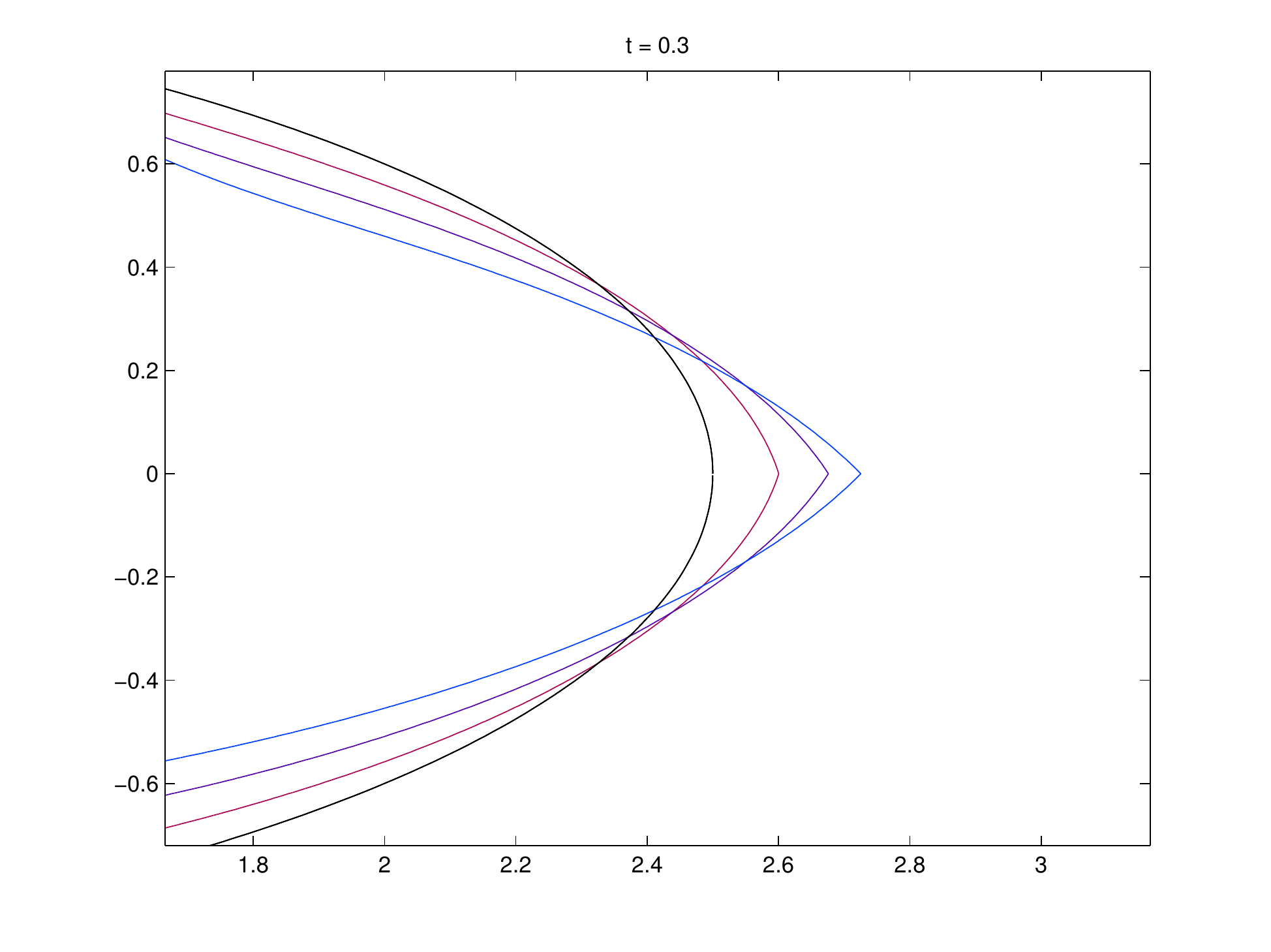}} \\
\caption{\small Zero level sets of the solution $\phi(\cdot,t)$ for $t = 0.1,0.2,0.3$; left: minimization/maximization principle in \textbf{Example 5} with $k=1$ and $d =2$, right: Lax-Friedrichs.}\label{exp_4}
     \end{center}
 \end{figurehere}

\begin{figurehere}
     \begin{center}
     \vskip -0.3truecm
   \scalebox{0.4}{\includegraphics{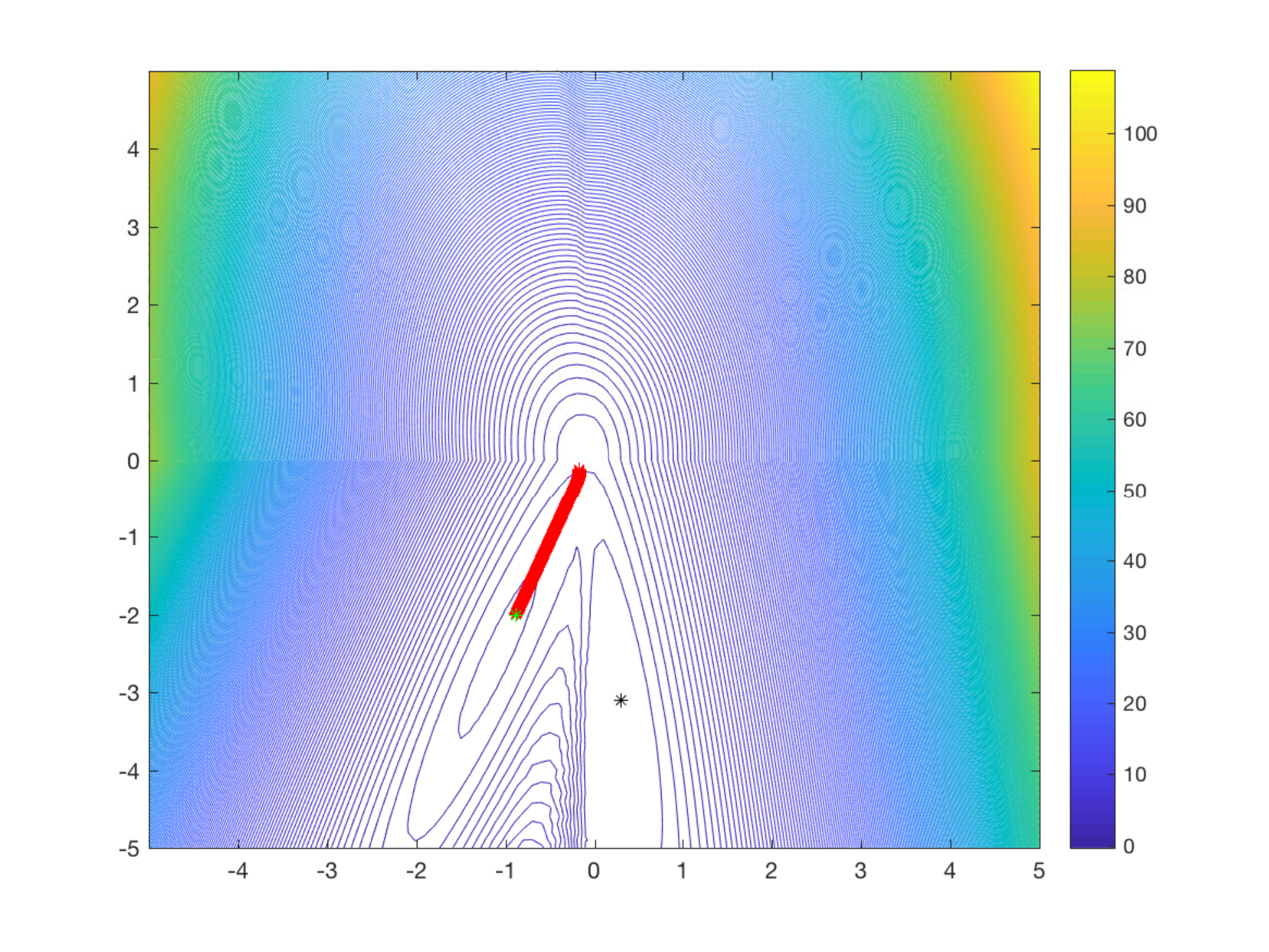}}
           \scalebox{0.4}{\includegraphics{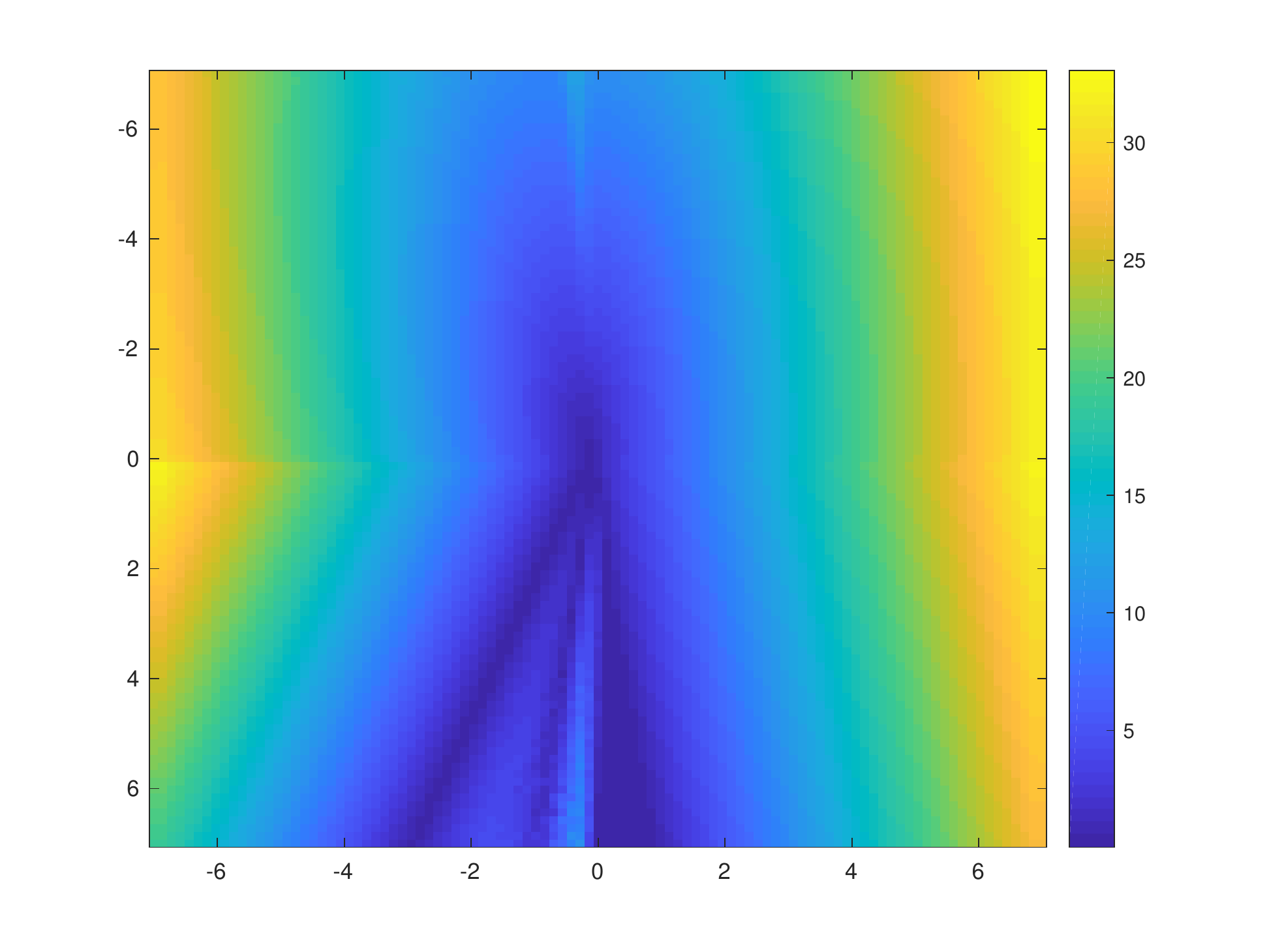}} \\
\caption{\small  \textbf{Example 5} with $k=1$ and $d =2$.  Left: functional to be minimized at $ x = (-0.93,-0.35)$, $t = 0.3$ (black star: global minimum;  red stars: an iterate of coordinate descent with random initial guess; green star: final iterate of the coordinate descent method.) Right: $2$ norm of the gradient of the functional.}\label{exp_4def}
     \end{center}
 \end{figurehere}

\begin{figurehere}
     \begin{center}
     \vskip -0.3truecm
   \scalebox{0.4}{\includegraphics{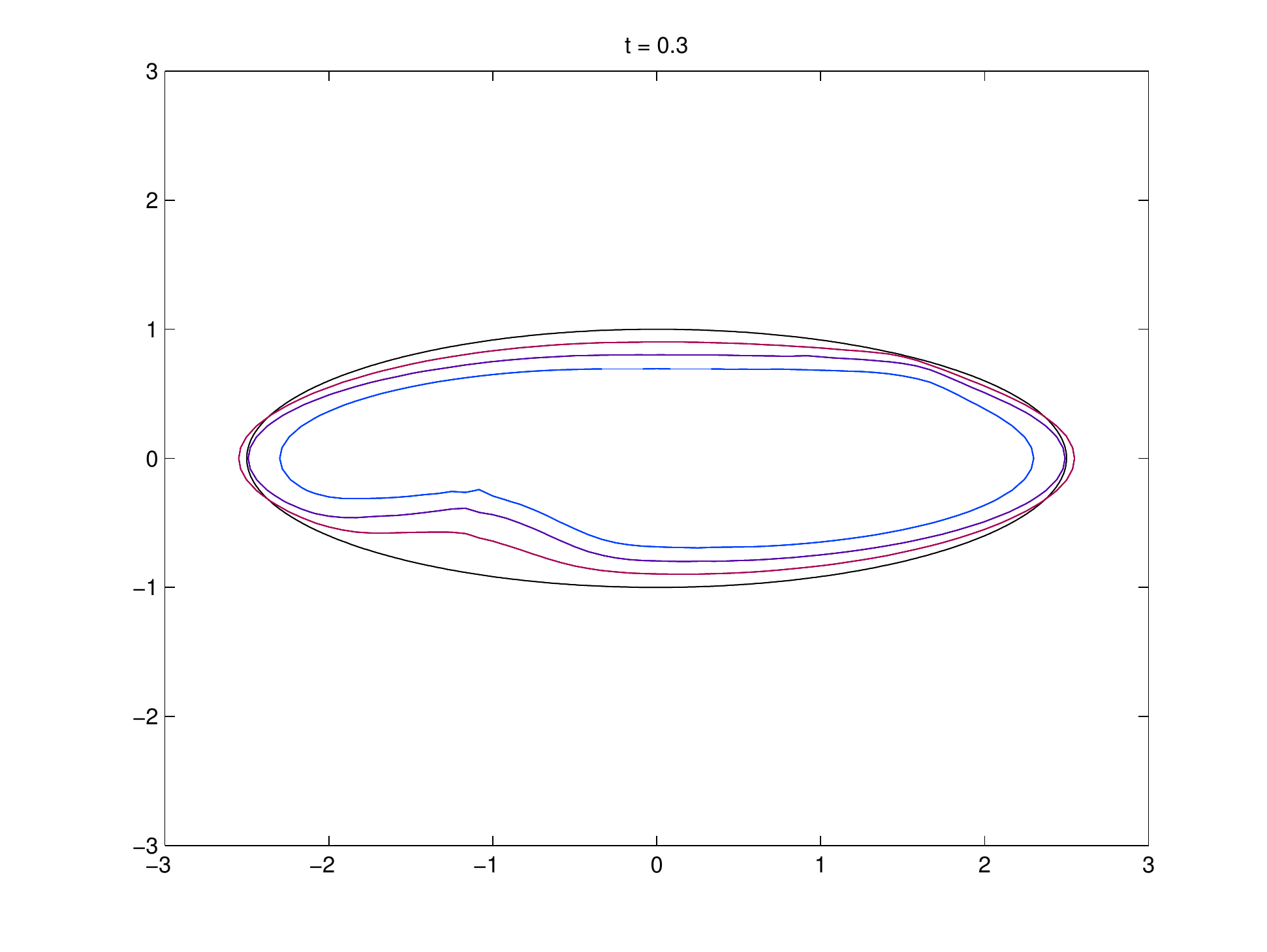}}\\
   \scalebox{0.4}{\includegraphics{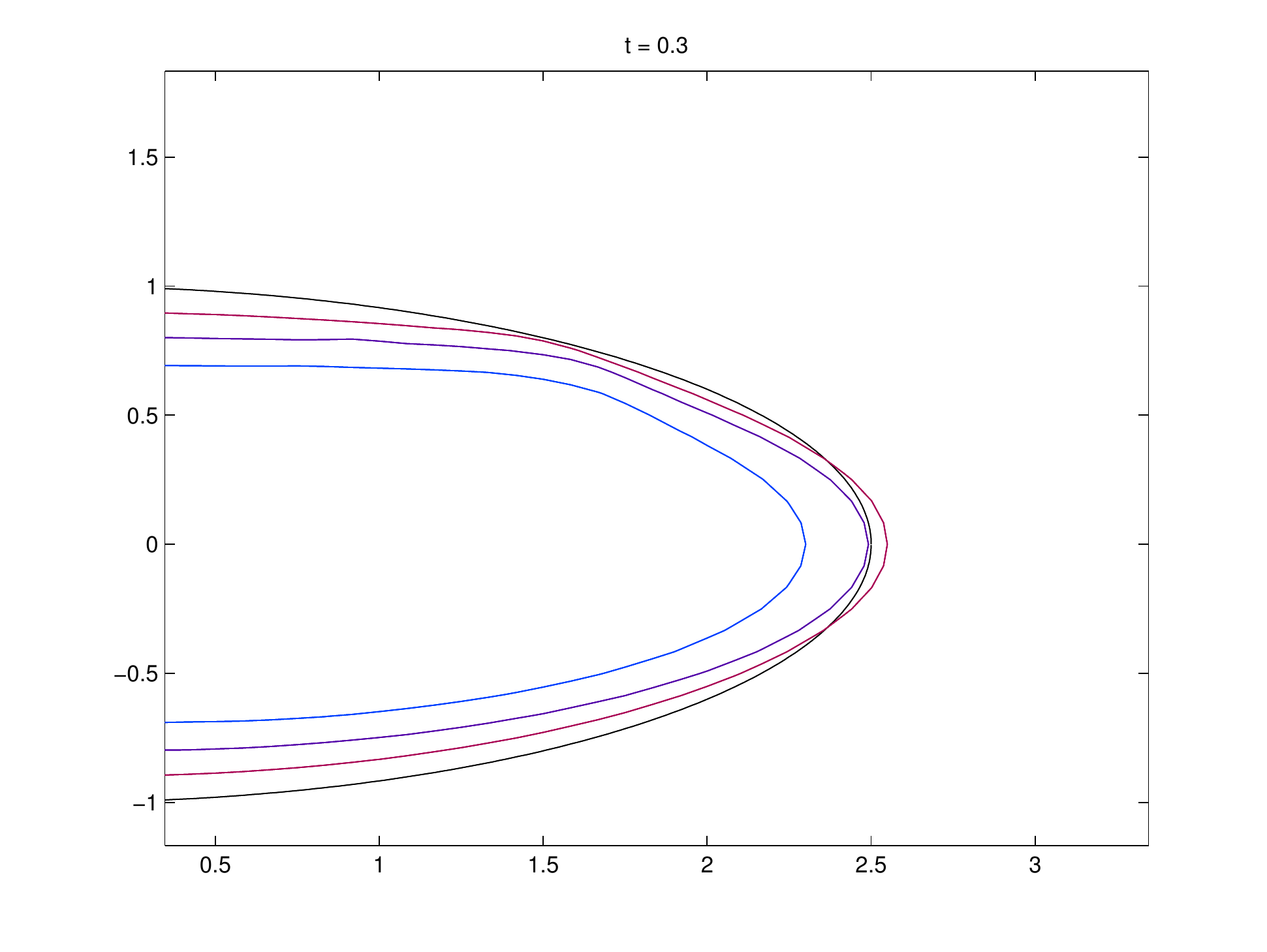}}\\
   \scalebox{0.4}{\includegraphics{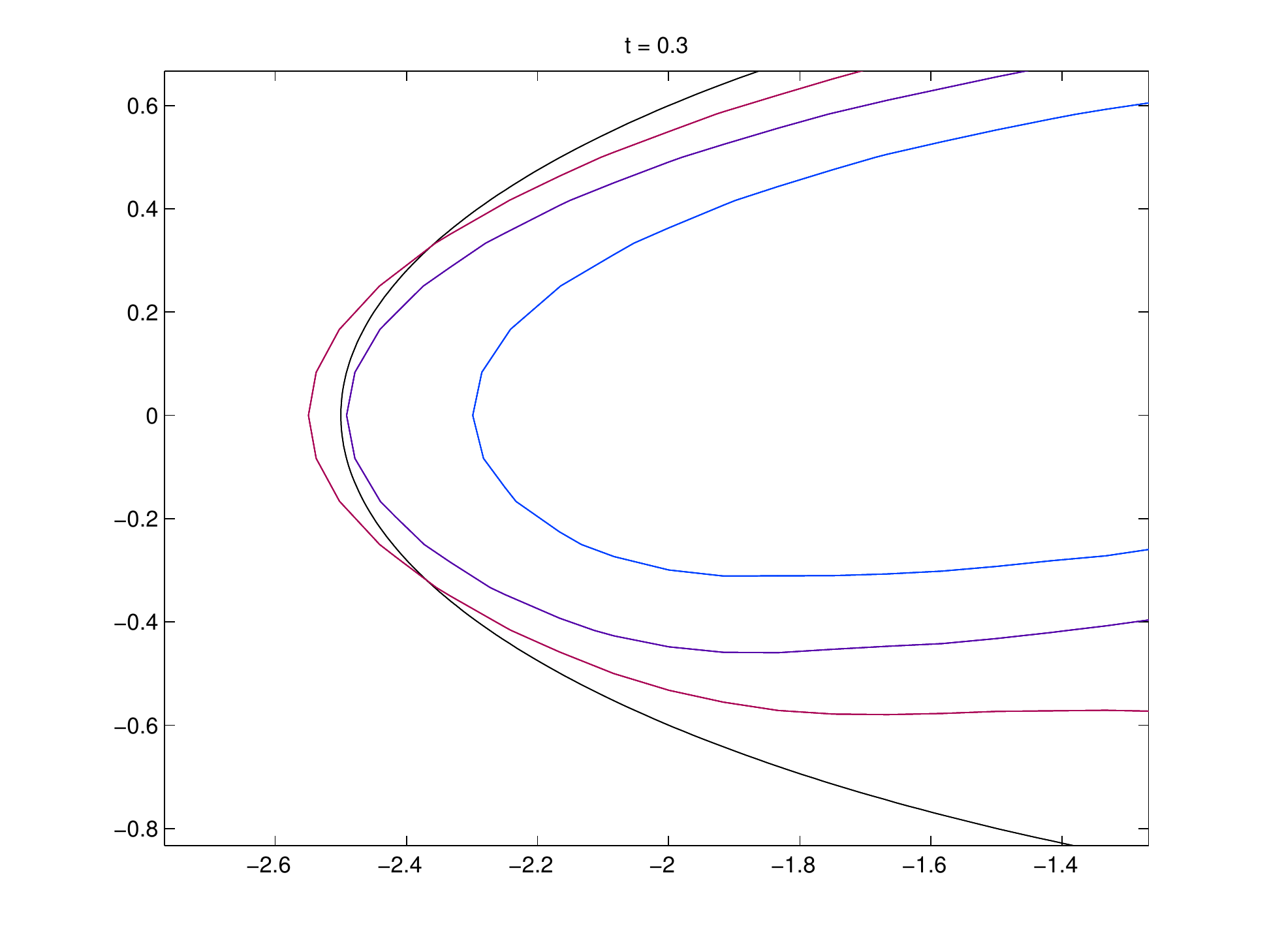}}
\caption{\small Zero level sets of the solution $\phi(\cdot,t)$ for $t = 0.1,0.2,0.3$  by minimization/maximization principle in \textbf{Example 5} with $k=1$ and $d =7$; top: full-size; middle and bottom: close-up.}\label{exp_45}
     \end{center}
 \end{figurehere}

\section{Acknowledgements}

We sincerely thank Dr. Gary Hewer and his colleagues (China Lake Naval Center) for providing help and guidance in practical optimal control and differential game problems, Prof. Frederic Gibou for private communication to explore the possibility to fully parallelize the method, and Prof. Lawrence C. Evans and Prof. Wilfrid Gangbo for exploring if the conjectures made in this paper are sound.
We also thank Alex Tong Lin for many discussions of insights in our paper.

We thank sincerely also the two anonymous referees for many insightful recommendations and comments to help substantially improve our paper.

Research of the first, the third and fourth authors are respectively supported by DOE grant DE-SC00183838 and ONR grant N000141410683, N000141210838, N000141712162.

\end{document}